\documentclass[11pt]{article}
\usepackage[margin=1in]{geometry}
\usepackage[square]{natbib}
\usepackage{authblk}
\usepackage[dvipsnames]{xcolor}
\definecolor{green}{HTML}{4CAF50}
\definecolor{blue}{HTML}{0D47A1}

\usepackage{amsmath}
\usepackage{amssymb,amsbsy,amsthm}
\usepackage{subcaption}
\theoremstyle{plain}
\usepackage{graphicx}
\usepackage{enumerate}
\usepackage{cases}
\usepackage{thm-restate}

\usepackage{wrapfig} 
\usepackage[colorlinks,linkcolor=blue,citecolor=blue,urlcolor=green]{hyperref}

\allowdisplaybreaks[1]

\numberwithin{equation}{section}

\usepackage{booktabs}
\usepackage{bm}
\usepackage{algorithm}
\usepackage{algorithmic}
\usepackage{tikz}
\tikzset{dot/.style={circle,fill=black,inner sep=0pt,minimum size=4pt}}

\usepackage{mathtools}

\usepackage{xspace}
\DeclareRobustCommand{\modelname}{\texttt{PNKG}\xspace}
\DeclareMathOperator{\R}{\mathbb{R}} 

\renewcommand{\P}{\mathbb{P}} 
\newcommand{\E}{\mathbb{E}} 
\newcommand{\eps}{\varepsilon} 

\newcommand{\op}{\mathrm{op}}

\def\cF{{\mathcal F}}

\def\cX{{\mathcal X}}

\theoremstyle{plain}
\newtheorem{theorem}{Theorem}

\newtheorem{lemma}[theorem]{Lemma}

\theoremstyle{definition}
\newtheorem{assumption}{Assumption}

\theoremstyle{remark}

\newtheorem{remark}{Remark}

\title{Toward a Unified Statistical Theory of Unsupervised Pretraining and Supervised Neural Knowledge Graph Learning}
\author[1]{Jifan Zhang}
\author[1,2]{Mikl\'os Z. R\'acz}
\author[3]{Suqi Liu}
\affil[1]{Department of Statistics and Data Science, Northwestern University}
\affil[2]{Department of Computer Science, Northwestern University}
\affil[3]{Department of Statistics, University of California, Riverside}
\date{}

\begin{document}

\maketitle

\begin{abstract}
Knowledge graph learning provides a powerful framework for representing and inferring structured knowledge, with broad practical applications.
However, the scarcity of relation-specific labeled triples per entity hinders the training of expressive models, and the ad hoc design of scoring functions limits generalizability and lacks theoretical grounding.
We address both issues with a theoretically grounded, end-to-end training framework that extends and subsumes existing methods.
Our framework is a two-stage procedure: unsupervised pretraining over heterogeneous corpora followed by supervised learning with multiple relation types.
We establish a nonasymptotic risk bound that disentangles pretraining representation error from labeled-sample complexity, formally quantifying the benefit of large-scale unlabeled data for downstream knowledge prediction.
Synthetic experiments validate each theoretical component, and real-world experiments confirm the effectiveness of our approach on large-scale knowledge graph benchmarks.
\end{abstract}

\section{Introduction}
Knowledge graphs (KGs) organize world information as a structured network of entities
and the relationships between them,
and have become the standard representation for structured relational knowledge
in domains ranging from question answering and recommendation to
information extraction and biomedical discovery
\citep{yang2022survey,zhang2016collaborative,huang2019knowledge,bonner2022review}.
Large-scale KGs such as the Google Knowledge Graph and
the Unified Medical Language System (UMLS) contain millions to billions of
entities spanning dozens of heterogeneous relation types, making scalable and
accurate KG prediction a central problem in machine learning \citep{ji2021survey}.
Despite the large number of entities in real-world KGs, the observed triples
are typically sparse relative to the space of all possible entity pairs,
resulting in low information density across the graph.
This sparsity commonly hinders effective learning of KG prediction models.
At the same time, entities in these graphs usually carry rich unlabeled side
information—textual descriptions, ontology memberships, database annotations—that
is readily available even when supervised triples are scarce.

The dominant approach to KG prediction is embedding-based: learn low-dimensional
entity and relation vectors so that a translational, bilinear, or rotational
scoring function ranks observed triples above negatives
\citep{bordes2013translating,yang2014embedding,trouillon2016complex,sun2019rotate}.
A substantial empirical literature further augments these representations with
textual descriptions, pretrained language models, or other entity attributes
\citep{xie2016representation,yao2019kg,wang2021kepler,wang2022simkgc}, demonstrating
that side information is practically valuable, especially when labeled triples are scarce.
However, these methods are typically tailored to specific applications and rely on
domain-specific inductive biases, and theoretical guarantees remain largely absent.
Recently, \citet{liu2024representation} established nonasymptotic risk bounds for
neural KG learning with fixed and trainable entity representations, and
\citet{ge2023provable} showed that unsupervised pretraining reduces labeled-sample
complexity in generic latent-variable models.
Our theory builds on these developments and establishes a unified statistical
framework connecting unsupervised pretraining to supervised KG prediction.

A fundamental challenge in KG learning is the tension between data scarcity and
model capacity: labeled triples are typically limited, yet flexible
representation-based models require many parameters.
How to systematically exploit diverse unlabeled datasets containing heterogeneous
side information, and what provable benefits such exploitation confers, are
therefore critical open questions.
We address these questions with a theoretically grounded two-stage framework,
which we call the Pretrained Neural Knowledge Graph (PNKG) framework.
In the first stage, entity-level side information from multiple views is
mapped through view-specific positive-semidefinite kernels. The leading
eigenvectors of a weighted aggregation of their Gram matrices then yield
low-dimensional entity coordinates via kernel principal component analysis (KPCA).
The kernel formulation also provides a natural interface to pretrained
language encoders.  Each frozen encoder induces a positive-semidefinite kernel
on entity descriptions through the inner product of its feature map. This feature-map interpretation is consistent with the kernel perspective on
deep architectures developed by \citet{cho2009kernel}. The PNKG
aggregates these encoder-induced Gram matrices, allowing views with different
architectures and output dimensions to be combined on their common entity index set.
In the second stage,
these embeddings serve as inputs to ReLU networks trained on labeled triples.
The weighted direct-sum RKHS provides a unified representation space for the heterogeneous
side information—such as text descriptions and metadata—found across diverse
datasets, while the ReLU networks enable flexible modeling of the scoring function
without requiring predefined domain-specific priors.

We establish two main theoretical results for this framework.
Theorem~\ref{thm:main-pretrain} gives a nonasymptotic perturbation bound for
the KPCA pretraining stage. It shows that the learned entity subspace is accurate
when the multi-view kernel residuals are small relative to the effective signal
separation of the weighted population geometry. The result covers both
inner-product and Gaussian distance kernels under admissible
information-plus-noise regimes, and it separates finite-sample kernel error from
possible population bias induced by misspecified kernel--noise geometry. It also motivates a signal-adjusted inverse-variance view-weighting principle:
views with strong effective signal, small deterministic bias, and low
fluctuation variance should receive larger weights, while noisy or
systematically biased views should be downweighted. This representation-recovery result provides the
pretraining component of our end-to-end statistical theory and may be of
independent interest.
Our main result,
Theorem~\ref{thm:main-end2end}, gives an end-to-end risk bound that additively
decomposes the downstream KG prediction error into neural approximation error,
supervised estimation error, optimization error,
and pretraining error, making explicit which term is the bottleneck as unlabeled
and labeled sample sizes and model capacity vary.
This decomposition cleanly separates errors arising in different stages of the
pipeline and due to different modeling and algorithmic choices.

The theoretical findings are further supported by simulation studies on
synthetically generated data.
We additionally demonstrate the effectiveness of the framework on real-world KGs including WordNet~\citep{miller1995wordnet} and PrimeKG~\citep{chandak2023building},
where multi-view pretrained neural knowledge graph models
consistently outperform graph-only baselines and improve over their single-view counterparts.\footnote{Code is available at \url{https://github.com/jifanzhang999/KGpretraining}.}
These empirical results not only corroborate the theory but also demonstrate the practical utility of the framework.

In summary, our contributions are threefold.
(i) We propose a generic two-stage framework combining multi-view KPCA for
unsupervised pretraining with ReLU networks for supervised KG learning.
(ii) We prove an end-to-end transfer theorem
connecting pretraining error to supervised KG prediction risk
with nonasymptotic bounds.
(iii) We validate the theory through controlled simulations that isolate each theoretical error component and through real-world experiments on benchmark KGs.

\section{Related Work}
\paragraph{Knowledge graph embeddings.}
Knowledge graph completion has been approached primarily by learning
low-dimensional entity and relation embeddings paired with a relation-specific
scoring function. Translational models such as TransE interpret a relation as a
vector translation from the head embedding to the tail embedding
\citep{bordes2013translating}. Bilinear and tensor-factorization models,
including DistMult and ComplEx, use multiplicative interactions to capture
symmetric or asymmetric relational patterns
\citep{yang2014embedding,trouillon2016complex}, while RotatE represents
relations as rotations in complex space \citep{sun2019rotate}.
These methods, along with many variants surveyed in
\citep{nickel2016review,ji2021survey}, form the standard graph-only baselines
for link prediction. Because they learn exclusively from observed triples,
they struggle in sparse, noisy, or semi-inductive settings where entities
carry rich side information but few labeled triples are available for the
target relation.

\paragraph{Text- and side-information-enhanced KG completion.}
A substantial empirical literature augments KG learning with textual
descriptions, pretrained language models, or other entity attributes.
DKRL incorporates entity descriptions into KG embeddings while preserving a
TransE-style relational objective \citep{xie2016representation}. KG-BERT
recasts triples as textual sequences and scores them with a pretrained BERT
encoder \citep{yao2019kg}. KEPLER jointly optimizes a knowledge-embedding
objective and a masked language-modeling objective so that textual descriptions
enrich the entity representations \citep{wang2021kepler}. More recent inductive
and contrastive approaches push this further: BLP studies BERT-based inductive
entity representations \citep{daza2021inductive}, StAR combines textual
encoders with structure-aware scoring \citep{wang2021star}, and SimKGC shows
that contrastive learning with in-batch, pre-batch, and self-negatives can make
PLM-based KGC competitive with embedding-based methods \citep{wang2022simkgc}.
These works demonstrate that side information is practically valuable,
especially for inductive entities. Unlike this algorithmic line, our goal is
not to propose a new scoring model but to provide an end-to-end statistical
account of how unlabeled multi-view pretraining error propagates into downstream
KG risk.

\paragraph{Statistical theory for KG and network learning.}
There is a long statistical literature on latent-space network models, starting
with latent position models for social networks \citep{hoff2002latent} and
extending to multilayer and multiplex network models with shared latent
structure \citep{macdonald2022latent}. These works provide important
identifiability, estimation, and inference guarantees, but typically assume a
specified link function or a relatively structured observation pattern—assumptions
that can be restrictive for large KGs with heterogeneous relation types, sparse
observed triples, and unknown scoring functions. Closest to the supervised
component of our framework is \citet{liu2024representation}, which treats
neural KG learning as nonparametric regression over triples and proves
nonasymptotic in-sample and out-of-sample MSE bounds.
Our work is complementary: we analyze how entity coordinates are recovered from
unlabeled multi-view side information and how the resulting representation error
propagates into the downstream KG risk bound.

\paragraph{Theory of unsupervised pretraining.}
Our work is related to theoretical studies of unsupervised pretraining.
Contrastive-learning theory has shown that representations learned from
positive and negative pairs can support downstream classification under
latent-class or augmentation-graph assumptions
\citep{arora2019theoretical,haochen2021provable}. Reconstruction- and
prediction-based analyses show that solving certain pretext tasks reduces
labeled sample complexity under conditional-independence or related structural
assumptions \citep{lee2021predicting}. Most relevant to our setting,
\citet{ge2023provable} develop a generic framework showing that, under
an informative representation condition, unsupervised pretraining reduces labeled-sample complexity,
yielding an excess-risk bound that separates unlabeled and labeled sample terms.
We instead focus on KG-specific questions including multi-relational triples and heterogeneous entity views. Our Theorem~\ref{thm:main-end2end} supplies a more detailed decomposition tailored to the KG setting
and achieves sharper sample-complexity rates.

\paragraph{Deep feature kernels and spectral aggregation.} Our use of pretrained encoders in the real-data experiments admits a natural
kernel interpretation.  Each frozen language encoder supplies a feature map
\(\phi_s:\mathcal X_s\to\mathcal H_s\) and therefore induces the kernel
\(
    k_s(u,u')
    =
    \langle\phi_s(u),\phi_s(u')\rangle_{\mathcal H_s}
\).
The observed language-model embeddings are explicit finite-dimensional
coordinates of \(\phi_s\), and their dot products evaluate this
encoder-induced kernel. This interpretation is related to the
kernel perspective on deep architectures developed by
\citet{cho2009kernel} and summarized by \citet{wilson2016deep}.

Our aggregation algorithm is also related in spirit to the distributed PCA method of \citet{fan2019distributed}, which combines local spectral
summaries to recover a common leading eigenspace.  The two
algorithms differ: distributed PCA aggregates local
eigenspace projectors, whereas PNKG aggregates view-specific entity-kernel
matrices before extracting their leading eigenspace.  Nevertheless, both
frameworks exploit heterogeneous summaries that share a
common latent spectral structure.

\section{The PNKG Framework}
\label{sec:setup}
We formalize the two-stage PNKG framework.  The first stage uses unlabeled
entity-level side information from multiple sources to construct
low-dimensional coordinate embeddings for the entities.  The second stage
leverages these embeddings and learns relation-specific ReLU networks from
labeled triples.

\subsection{Unsupervised pretraining through weighted KPCA}
\label{subsec:pretraining-model}

Let \([N]:=\{1,\ldots,N\}\) denote the common entity set. 
We consider data collected from different sources called views and let
\([S]:=\{1,\ldots,S\}\) index the side-information views.
For each view $s\in[S]$,
the view-specific side information is organized into a matrix
\[
    E_s =
    \begin{pmatrix}
     u_{s,1}^{\top} \\
    \vdots\\
     u_{s,N}^{\top} 
    \end{pmatrix}
    \in \R^{N\times m_s},
\]
where $m_s$ is the entity dimension in view $s$ and $u_{s,j}\in\R^{m_s}$
is the data vector of entity $j$ in view $s$.
For example, each $s$ can represent a hospital with $m_s$ patients;
$j$ indexes the diagnoses.
Therefore, $u_{s,j}$ encodes information about all patients associated with disease $j$ in hospital $s$'s medical dataset.

We now introduce the information-plus-noise structure that motivates
the theoretical analysis of the PNKG framework.  For each view $s$, we model the observed entity vector as
\begin{equation}
    u_{s,j}
    =
    v_{s,j}
    +
    \sigma_{s,j}\frac{\varepsilon_{s,j}}{\sqrt{m_s}},
    \qquad
    j=1,\ldots,N,
    \label{eq:info-plus-noise-main}
\end{equation}
where $v_{s,j}\in\R^{m_s}$ is the latent signal vector of entity $j$,
$\varepsilon_{s,j}\in\R^{m_s}$ is high-dimensional noise, and $\sigma_{s,j}$ is a
view-specific noise scale.  The normalization $m_s^{-1/2}$ keeps the noise
energy controlled as the entity dimension grows.
\paragraph{A linear kernel example.}
 We start by specializing \eqref{eq:info-plus-noise-main} to the simplest
linear-isotropic setting. Assume that the noise scale is constant within each
view, $\sigma_{s,j} = \sigma_s$, and that
\[
    \varepsilon_{s,j}\sim N(0,I_{m_s}),\qquad j=1,\ldots,N.
\]
We can write
\[
    E_s
    =
    V_s
    +
    \frac{\sigma_s}{\sqrt{m_s}}\xi_s,
    \qquad
    \xi_s(j,\cdot)\stackrel{\rm iid}{\sim}N(0,I_{m_s}),
\]
where the $j$th row of $V_s$ is $v_{s,j}^{\top}$, the corresponding linear
entity-similarity matrix is
\begin{equation}
\label{eq:linear}
    K_s^{\rm lin}
    =
    \frac{1}{N}E_sE_s^\top
    \in\R^{N\times N}.
\end{equation}
For multiple views, we aggregate the entity-level similarity matrices by
\[
     K^{\rm lin}(w)
    =
    \sum_{s=1}^S w_s K_s^{\rm lin},
    \qquad
    w_s\ge0,\quad \sum_{s=1}^S w_s=1.
\]
The leading eigenvectors of $K^{\rm lin}(w)$ provide
low-dimensional entity coordinates for the common
entity set.

\paragraph{General kernels.} The linear construction extends to
positive-semidefinite kernels. For each view \(s\in[S]\), let
\[
    k_s:\mathbb R^{m_s}\times\mathbb R^{m_s}\to\mathbb R
\]
be a positive-semidefinite kernel. There then exist a Hilbert space
\(\mathcal H_s\) and a feature map \[
    \phi_s:\mathbb R^{m_s}\to\mathcal H_s
\]
such that
\[
    k_s(u,u')
    =
    \left\langle
        \phi_s(u),\phi_s(u')
    \right\rangle_{\mathcal H_s}.
\]
We define the view-specific entity-kernel matrix by
\[
    K_s^{\rm obs}(j,j')
    :=
    \frac1N
    k_s(u_{s,j},u_{s,j'})
    =
    \frac1N
    \left\langle
        \phi_s(u_{s,j}),
        \phi_s(u_{s,j'})
    \right\rangle_{\mathcal H_s},
    \qquad
    j,j'\in[N].
\]
Thus \(K_s^{\rm obs}\) records a (possibly nonlinear) geometry induced by view \(s\) on the
common entity set.
Examples include inner-product
kernels of the form
\(
    k_s(u,u')
    =
    f_s(u^\top u')
\)
and distance kernels of the form
\(
    k_s(u,u')
    =
    f_s(\|u-u'\|_2^2)
\). The linear construction above is the special case
\(
    k_s(u,u')=u^\top u',
    \,
    \phi_s(u)=u
\).

For weights $w=(w_1,\ldots,w_S)$, define the weighted
multi-view entity-kernel matrix
\begin{equation}
    K^{\rm obs}(w)
    :=
    \sum_{s=1}^S w_s K_s^{\rm obs},
    \qquad
    w_s\ge0,\quad \sum_{s=1}^S w_s=1 .
    \label{eq:weighted-entity-kernel}
\end{equation}
The aggregation is performed on the common entity index set and is therefore
well defined even when the view-specific input spaces and feature dimensions
differ. 

To interpret this aggregation as a new single kernel, write
\(
    \mathbf u_j
    :=
    (u_{1,j},\ldots,u_{S,j})
\)
for the collection of side-information observations associated with entity
\(j\).  Define the weighted direct-sum feature map
\[
    \Phi_w(\mathbf u_j)
    :=
    \bigl(
        \sqrt{w_1}\phi_1(u_{1,j}),
        \ldots,
        \sqrt{w_S}\phi_S(u_{S,j})
    \bigr)
    \in
    \bigoplus_{s=1}^S\mathcal H_s.
\]
Here
\(
    \bigoplus_{s=1}^S\mathcal H_s
\)
is the Hilbert direct sum of the view-specific feature spaces.  Since \(S\)
is finite, it consists of tuples
\(
    (h_1,\ldots,h_S),
    \,
    h_s\in\mathcal H_s,
\)
with inner product
\[
    \left\langle
        (h_1,\ldots,h_S),
        (h_1',\ldots,h_S')
    \right\rangle_{\oplus_s\mathcal H_s}
    :=
    \sum_{s=1}^S
    \langle h_s,h_s'\rangle_{\mathcal H_s}.
\]

The weighted views therefore induce the aggregated kernel
\[
    k_w(\mathbf u,\mathbf u')
    :=
    \left\langle
        \Phi_w(\mathbf u),
        \Phi_w(\mathbf u')
    \right\rangle_{\oplus_s\mathcal H_s}
    =
    \sum_{s=1}^S
    w_sk_s(u_s,u_s').
\]
Because \(w_s\ge0\), \(k_w\) is positive semidefinite.  Moreover,
\(K^{\rm obs}(w)\) is precisely its normalized Gram matrix on the entity set:
\[
\begin{aligned}
    K^{\rm obs}(w)(j,j')
    &=
    \frac1N
    k_w(\mathbf u_j,\mathbf u_{j'})\\
    &=
    \frac1N
    \left\langle
        \Phi_w(\mathbf u_j),
        \Phi_w(\mathbf u_{j'})
    \right\rangle_{\oplus_s\mathcal H_s}\\
    &=
    \sum_{s=1}^S
    w_sK_s^{\rm obs}(j,j').
\end{aligned}
\]
\begin{remark}[Pretrained encoder-induced kernels]
\label{rem:encoder-induced-kernels}
The general kernel formulation naturally accommodates pretrained language
encoders. In the language-model application, \(u_{s,j}\in\mathbb R^{m_s}\)
denotes a vectorized representation of the textual
description of entity \(j\) and \(
    \phi_s:\mathcal X_s\to\mathbb R^{p_s}
\) is the feature map supplied by the frozen encoder in view \(s\).
The encoder therefore induces the positive-semidefinite kernel
\(
    k_s(u,u')
    =
    \left\langle
        \phi_s(u),\phi_s(u')
    \right\rangle_{\mathbb R^{p_s}}
\).
The dot product between language-model embeddings is thus the canonical
Hilbert-space inner product realizing the kernel induced by the encoder. The procedure operates on the kernel matrices and permits
different encoders to have different feature dimensions \(p_s\).  

The general framework applies to arbitrary (encoder-induced)
positive-semidefinite kernels, while the theory derives explicit rates
for tractable finite-dimensional kernel--noise regimes.
\end{remark}

\paragraph{Spectral entity coordinates.} The pretrained entity coordinates are obtained from the leading eigenspace of
$K^{\rm obs}(w)$. For a symmetric matrix \(A\), let
\({\rm TopEig}_d(A)\) denote any \(N\times d\) matrix whose columns form an
orthonormal basis of the eigenspace associated with the \(d\) largest eigenvalues
of \(A\).  Let
\[
    \widehat Z
    :=
    {\rm TopEig}_d\!\left(K^{\rm obs}(w)\right)
    \in\R^{N\times d},
    \qquad
    \widehat Z^\top \widehat Z=I_d .
\]
The matrix \(\widehat Z\) is used as the frozen entity-coordinate matrix in
the supervised stage, with its \(j\)th row representing the pretrained
coordinate of entity \(j\).

\begin{remark}[Choice of $d$]
The dimension \(d\) is the embedding dimension selected by the pretraining
procedure. The pretrained coordinates are not identifiable as a particular basis:
any orthogonal change of basis gives an equivalent representation
for the same signal space. Thus, what matters is the linear subspace spanned by
the selected eigenvectors. In practice,
$d$ is treated as a tuning parameter; choosing it too small may remove useful
signal directions, while choosing it too large increases unnecessary model complexity.
\end{remark}
\paragraph{Shared signal eigenspace.} We define the pure signal kernel matrix of view \(s\) by
\[
    K_s^{\rm sig}(j,j')
    :=
    \frac{1}{N}k_s(v_{s,j},v_{s,j'}),
    \qquad j,j'\in[N],
\] where \(v_{s,j}\) is the latent signal vector in the information-plus-noise
model~\eqref{eq:info-plus-noise-main}. The central structural condition behind the PNKG framework is that the pure-signal kernels share a
common rank-\(d\) entity eigenspace, while allowing view-specific variation
outside that subspace:
\begin{equation}
    K_s^{\rm sig}
    =
    Z_{\rm sig} \Lambda_s Z_{\rm sig}^\top
    +
    R_s^{\rm sig},
    \qquad
    Z_{\rm sig}^\top Z_{\rm sig}=I_d,
    \qquad
    \Lambda_s\in\R^{d\times d}\succeq0 .
    \label{eq:shared-subspace-main}
\end{equation}
Here \(Z_{\rm sig}\in\R^{N\times d}\) is the shared population entity
eigenspace, \(\Lambda_s\) captures the strength of view \(s\) along this shared
subspace and \(R_s^{\rm sig}\) is a symmetric view-specific residual matrix
supported on the orthogonal complement of \(Z_{\rm sig}\):
\[
    (R_s^{\rm sig})^\top=R_s^{\rm sig},
    \qquad
    R_s^{\rm sig} Z_{\rm sig}=0 .
\] 
Then, $K^{\rm sig}(w)$ is the weighted population signal
kernel matrix given a weight vector \(w\in\Delta_S\):
\begin{equation}
    K^{\rm sig}(w)
    :=
    \sum_{s=1}^S w_s K_s^{\rm sig}=
    Z_{\rm sig}
    \left(
        \sum_{s=1}^S w_s\Lambda_s
    \right)
    Z_{\rm sig}^\top +
    \sum_{s=1}^S w_sR_s^{\rm sig}.
    \label{eq:weighted-signal-kernel}
\end{equation}

The views are not required to be equally informative; they only need to
agree on the leading entity-level signal eigenspace, while the observed kernels
computed from \(u_{s,j}\) are noisy finite-sample versions of these signal
matrices.

The overall multi-view kernel pretraining procedure is summarized in Algorithm~\ref{alg:multi-view-kpca}.
\begin{algorithm}[t]
\caption{Multi-view kernel pretraining}
\label{alg:multi-view-kpca}
\begin{algorithmic}[1]
\REQUIRE Entity-feature observations $\{u_{s,j}\in\R^{m_s},j\in[N]\}_{s=1}^S$,
kernels $\{k_s\}_{s=1}^S$, rank $d$, and weights $w\in\Delta_S$.
\ENSURE Frozen entity coordinates $\widehat Z\in\R^{N\times d}$.
\FOR{$s=1,\ldots,S$}
    \STATE $ K_s^{\rm obs}(j,k)\leftarrow \frac 1 N k_s(u_{s,j},u_{s,k})$ for all $j,k\in[N]$.
\ENDFOR
\STATE $K^{\rm obs}(w)\leftarrow \sum_{s=1}^S w_s K_s^{\rm obs}$.
\STATE $\widehat Z\leftarrow{\rm TopEig}_d\!\left( K^{\rm obs}(w)\right)$.
\STATE \textbf{return} $\widehat Z$.
\end{algorithmic}
\end{algorithm}

\subsection{Supervised neural knowledge graph learning}
\label{subsec:supervised-model}

The supervised stage uses labeled (head, relation, tail) triples,
abbreviated as $(h, r, t)$ where $h$ and $t$ are entities and $r$ is the relation between them.
Let $[K]=\{1,\ldots,K\}$ be the
relation set and
\(
    \mathcal X=[N]\times[K]\times[N]
\)
be the space of all possible triples.  We observe the labeled samples
\[
\mathcal D_{\rm sup}=\{(X_i,Y_i)\}_{i=1}^n,
\]
where
\begin{equation}
X_i=(h_i,r_i,t_i)\in\mathcal X
\quad\text{and}\quad
Y_i=\gamma(X_i)+\varepsilon_i.
\label{eq:supervised-label-model}
\end{equation}
We assume that $\E[\varepsilon_i\mid X_i]=0$. In a biomedical KG, for instance, an entity may correspond to a disease, drug, gene, phenotype, or biological process. A labeled triple \((h,r,t)\) can encode a proposition such as whether drug \(h\) treats disease \(t\) or gene \(h\) is associated with disease \(t\), with \(r\) specifying the relation type. The label \(Y_i\) may be binary, indicating whether the triple is observed to be true, or real-valued, representing a confidence score or strength of association.
The true target function $\gamma:\mathcal X\to\R$ is the KG scoring function.
For binary link prediction, $\gamma(X_i)$ can be interpreted as the conditional probability
that triple $X_i$ is true.

Given the pretrained embeddings
$\widehat Z=(\widehat z_1,\ldots,\widehat z_N)^\top$, we learn one bounded ReLU network for each relation.
Let $\Pi_B:\R\to[-B,B]$ be the clipping map. For relation $r$, let
$F_{r}:\R^{2d}\to\R$  be a ReLU network with depth at most $D_{\rm net}$ and at most $W$ trainable parameters, and define
\begin{equation}
f_{r}(a,b)
\coloneqq
\Pi_B\{F_{r}([a^\top,b^\top]^\top)\}.
\label{eq:relation-wise-head}
\end{equation}
Thus, $f_r$ is a bounded ReLU network.
The neural knowledge graph model is
\begin{equation}
f(\widehat Z;h,r,t)
= f_{r}(\widehat z_h,\widehat z_t).
\label{eq:full-frozen-predictor}
\end{equation}
Let $\mathcal F_{\widehat Z}(D_{\rm net},W,B)$ denote the neural knowledge graph function class given $\widehat Z$.
We work with a
predictor \(\widehat f(\widehat Z;\cdot)\in
\mathcal F_{\widehat Z}(D_{\rm net},W,B)\) whose empirical risk is within
\(\delta_{\rm opt}\) of the best empirical risk over the class
\begin{equation}
\frac1n\sum_{i=1}^n
\{Y_i-\widehat{f}(\widehat Z;X_i)\}^2
\le
\inf_{f\in\mathcal F_{\widehat Z}(D_{\rm net},W,B) }
\frac1n\sum_{i=1}^n
\{Y_i-f(\widehat Z;X_i)\}^2
+\delta_{\rm opt}.
\label{eq:supervised-approx-erm}
\end{equation}
This estimator can be obtained by using standard optimization algorithms such as stochastic gradient descent (SGD). 

\section{Theoretical Results}
\label{sec:theory}
We present our two main theoretical results in turn: a subspace-recovery
guarantee for the weighted KPCA pretraining stage, followed by an end-to-end
oracle inequality for the full pretrained neural KG pipeline that builds on
it; proof ideas are summarized at the end of the section, with complete
proofs given in Appendix~\ref{app:proofs}.
\subsection{Subspace recovery}
\label{subsec:pretrain-theory}

For subspace recovery in the pretraining stage, we introduce the following assumption.
\begin{assumption}[Entity-level information-plus-noise model]
\label{ass:ek-info-noise}
For each view \(s\in[S]\), the observed entity vector satisfies
\[
    u_{s,j}
    =
    v_{s,j}
    +
    \sigma_{s,j}\frac{\varepsilon_{s,j}}{\sqrt{m_s}},
    \qquad
    j=1,\ldots,N,
\]
where $v_{s,j}\in\R^{m_s}$ is the latent signal vector of entity $j$,
$\varepsilon_{s,j}\in\R^{m_s}$ is high-dimensional noise, and $\sigma_{s,j}$ is a
view-specific noise scale.  The normalization $m_s^{-1/2}$ keeps the noise
energy controlled as the entity dimension grows.
\end{assumption}

We next require the noise distribution to satisfy the corresponding information-plus-noise conditions
for the kernel class under consideration.

\begin{assumption}[Admissible noise regimes]
\label{ass:admissible-noise}
For each view \(s\in[S]\), the noise variables
\(\{\varepsilon_{s,j}\}_{j=1}^N\) are independent of the signal vectors
\(\{v_{s,j}\}_{j=1}^N\).  We assume that one of the following two regimes
holds.

\begin{enumerate}
    \item \textbf{Inner-product kernel regime.} The noise $\varepsilon_{s,j}$ is i.i.d.\ with $\E [\varepsilon_{s,j}]=0$
   and satisfies a concentration inequality: for every 1-Lipschitz
    function \(F:\R^{m_s}\to\R\),
    \begin{equation}\label{eq:noiseconcentrate}
        \P\{|F(\varepsilon_{s,j})-\E F(\varepsilon_{s,j})|>r\}
        \le
        C_s\exp(-c_s r^{q_s}),
    \end{equation}
    where \(C_s,c_s,q_s>0\) do not depend on \(N\) or \(m_s\).  Define
    \[
        \nu_s:=\frac{\E\|\varepsilon_{s,j}\|_2^2}{m_s},
    \]
    and assume \(\nu_s=O(1)\).  The noise scales are uniformly bounded:
    \(
        |\sigma_{s,j}|\le \sigma_{s,\infty}
    \).
    The signal inner products are controlled: there exists a deterministic
    sequence \(B_{s,N}\) such that
    \[\max_{j,k\in[N]}|v_{s,j}^{\top}v_{s,k}|
        \le B_{s,N}.
    \]

    \item \textbf{Gaussian distance-kernel regime.}
    The noise is Gaussian within each view:
    \[
        \varepsilon_{s,j}\stackrel{\rm iid}{\sim}N(0,\Psi_{s,m_s}),
        \qquad
        \sigma_{s,j}=\sigma_s,
    \]
    where \(\Psi_{s,m_s}\succeq0\) does not depend on \(j\) and is normalized so
that
\[
    \frac{\operatorname{tr}(\Psi_{s,m_s})}{m_s}=1.
\]
The signal energy is controlled: there
    exists a deterministic center \(a_s\in\R^{m_s}\) and a deterministic
    sequence \(B_{s,N}\) such that, for $q=1,2,3$,
    \[
        \frac1N\sum_{j=1}^N
        \|v_{s,j}-a_s\|_2^{2q}
        \le
        B_{s,N}^q.
    \]
\end{enumerate}
\end{assumption}

\begin{assumption}[Shared effective signal eigenspace]
\label{ass:ek-shared-subspace}
The pure signal kernel matrices share a common rank-\(d\) entity
eigenspace.  Specifically, there exists
\(Z_{\rm sig}\in\R^{N\times d}\) with \(Z_{\rm sig}^\top Z_{\rm sig}=I_d\),
PSD matrices \(\Lambda_s\in\R^{d\times d}\), and symmetric residual
matrices \(R_s^{\rm sig}\) such that
\[
    K_s^{\rm sig}
    =
    Z_{\rm sig}\Lambda_sZ_{\rm sig}^\top
    +
    R_s^{\rm sig},
    \qquad
    Z_{\rm sig}^\top R_s^{\rm sig}=0,
    \qquad
    R_s^{\rm sig} Z_{\rm sig}=0.
\]
\end{assumption}
\begin{assumption}[Effective signal separation]
\label{ass:ek-eigengap}
Assume that every view has a positive separation margin:
\[
    \gamma_s
    :=
    \lambda_{\min}(\Lambda_s)
    -
    \|R_s^{\rm sig}\|_{\op}
    >0,
    \qquad s\in[S].
\]
Define
\(
    \gamma_0:=\min_{s\in[S]}\gamma_s
\), assume $\gamma_0>0$.
\end{assumption}
\begin{assumption}[Sample independence]
\label{ass:pretrain-supervised-independence}
The entity matrices \(\{E_s:s\in[S]\}\) used for pretraining are
independent across views $s$. When a supervised sample is also considered, it is independent of the pretraining matrices:
\(
    \mathcal D_{\rm sup}=\{(X_i,Y_i)\}_{i=1}^n
\).
\end{assumption}

Assumptions~\ref{ass:ek-info-noise}--\ref{ass:pretrain-supervised-independence}
together guarantee that the weighted population kernel
\(\sum_{s=1}^S w_sK_s^{\rm sig}\) admits a well-separated rank-\(d\) eigenspace
and that each observed kernel \(K_s^{\rm obs}\) is a controlled,
information-plus-noise perturbation of its population counterpart, under any
mixture of linear, Gaussian distance, and nonlinear inner-product kernels
across views. The following theorem converts this structural information into
a nonasymptotic Davis--Kahan-type bound on the principal-angle distance
between the aggregated empirical eigenspace \(\widehat Z\) and the true signal
subspace \(Z_{\rm sig}\), expressed through view-specific deterministic bias
scales \(b_{s,N,m_s}\), diagonal-correction scales \(d_{s,N,m_s}\), and
fluctuation scales \(v_{s,N,m_s}\) that we calibrate separately for each
kernel regime. Although we invoke this bound below to control the pretraining
error of the PNKG pipeline, the statement itself makes no reference to the
downstream supervised task: it is a general finite-sample subspace-recovery
guarantee for weighted multi-view (equivalently, multi-kernel) PCA under
heterogeneous information-plus-noise models, and so may be of independent
interest beyond the knowledge-graph setting we study here.

\begin{theorem}[Subspace recovery for weighted entity-kernel PCA]
\label{thm:main-pretrain}
Suppose Assumptions~\ref{ass:ek-info-noise},
\ref{ass:ek-shared-subspace}, \ref{ass:ek-eigengap},
and~\ref{ass:pretrain-supervised-independence} hold.
Let \(w=(w_1,\ldots,w_S)\in\Delta_S\) be deterministic, where
\[
    \Delta_S
    :=
    \left\{
        w\in[0,1]^S:
        \sum_{s=1}^S w_s=1
    \right\},
\]
and define
\[
    K^{\rm obs}(w)
    :=
    \sum_{s=1}^S w_sK_s^{\rm obs},
    \qquad
    \widehat Z
    :=
    {\rm TopEig}_d\!\left(K^{\rm obs}(w)\right).
\]

For each view \(s\in[S]\), define the signal-rescaling factor \(\alpha_s\), the deterministic Taylor-bias scale
\(b_{s,N,m_s}\), the nonconstant diagonal-correction scale
\(d_{s,N,m_s}\), and the centered fluctuation scale \(v_{s,N,m_s}\)
according to the following regimes.

\begin{enumerate}
    \item \textbf{Linear kernel regime.}
    Suppose \(s\in\mathcal S_{\rm lin}\), with
    \[
        k_s(u,u')=u^\top u',
    \]
    and suppose that the noise satisfies
    Assumption~\ref{ass:admissible-noise}(1), with homogeneous noise scale
    \[
        \sigma_{s,j}\equiv\sigma_s,
        \qquad j\in[N].
    \]
    Set
    \[
        \alpha_s:=1,\qquad
        b_{s,N,m_s}^{\rm lin}:=0,
        \qquad
        d_{s,N,m_s}^{\rm lin}:=0,
    \]
    and
    \[
        \left(v_{s,N,m_s}^{\rm lin}\right)^2
        :=
        C_{s,v}
        \left[
            \frac{\sigma_s^2B_{s,N}}{m_s}
            +
            \frac{\sigma_s^4(1+\nu_s)}{m_s}
        \right].
    \]

    \item \textbf{Gaussian distance-kernel regime.}
    Suppose \(s\in\mathcal S_{\rm gauss}\), with
    \[
        k_s(u,u')
        =
        \exp\{-\tau_s\|u-u'\|_2^2\},
        \qquad
        \tau_s>0,
    \]
    and suppose that the noise satisfies
    Assumption~\ref{ass:admissible-noise}(2).
    Define
    \[
        \alpha_s
        :=
        \exp(-2\tau_s\sigma_s^2),
        \qquad
        \eta_s^2
        :=
        \frac{
            \sigma_s^4
            \operatorname{tr}(\Psi_{s,m_s}^2)
        }{m_s^2}
        +
        \frac{
            \sigma_s^2
            \|\Psi_{s,m_s}\|_{\op}
            B_{s,N}
        }{m_s}.
    \]
    Set
    \[
        b_{s,N,m_s}^{\rm gauss}
        :=
        C_{s,b}
        \left(
            \tau_s^2\eta_s^2
            +
            \tau_s^3\eta_s^3
        \right),
        \qquad
        d_{s,N,m_s}^{\rm gauss}:=0,
    \]
    and
    \[
        \left(v_{s,N,m_s}^{\rm gauss}\right)^2
        :=
        C_{s,v}
        \left(
            \tau_s^2\eta_s^2
            +
            \tau_s^4\eta_s^4
            +
            \tau_s^6\eta_s^6
        \right).
    \]

    \item \textbf{Nonlinear inner-product kernel regime.}
    Suppose \(s\in\mathcal S_{\rm ip}\), where \(\mathcal S_{\rm ip}\) contains the nonlinear inner-product kernel
views and is disjoint from \(\mathcal S_{\rm lin}\).
    \[
        k_s(u,u')
        =
        f_s(u^\top u'),
    \]
    where \(f_s\in C^3(\mathbb R)\) satisfies
    \(
        L_{s,r}
        :=
        \sup_{t\in\mathbb R}
        |f_s^{(r)}(t)|
        <
        \infty,
        \quad
        r=1,2,3
    \).

    Suppose that the noise satisfies Assumption~\ref{ass:admissible-noise}(1).  Set
    \[
        \alpha_s:=1,
        \qquad
        \eta_s
        :=
        \sigma_{s,\infty}
        \left(
            \frac{B_{s,N}}{m_s}
        \right)^{1/2}
        +
        \sigma_{s,\infty}^2
        \sqrt{
            \frac{1+\nu_s}{m_s}
        }.
    \]
    The Taylor-bias scale is
    \[
        b_{s,N,m_s}^{\rm ip}
        :=
        C_{s,b}
        \left(
            L_{s,2}\eta_s^2
            +
            L_{s,3}\eta_s^3
        \right),
    \]
    and the deterministic diagonal-correction scale is
    \[
        d_{s,N,m_s}^{\rm ip}
        :=
        \frac{C_{s,b}   L_{s,1}\nu_s\sigma_{s,\infty}^2
        }{\sqrt N}.
    \]
    Finally, define
    \[
        \left(v_{s,N,m_s}^{\rm ip}\right)^2
        :=
        C_{s,v}
        \left(
            L_{s,1}^2\eta_s^2
            +
            L_{s,2}^2\eta_s^4
            +
            L_{s,3}^2\eta_s^6
        \right).
    \]
\end{enumerate}

For each \(s\in[S]\), let
\[
    (b_{s,N,m_s},d_{s,N,m_s},v_{s,N,m_s})
    :=
    \begin{cases}
        \left(
            b_{s,N,m_s}^{\rm lin},
            d_{s,N,m_s}^{\rm lin},
            v_{s,N,m_s}^{\rm lin}
        \right),
        & s\in\mathcal S_{\rm lin},\\[2mm]
        \left(
            b_{s,N,m_s}^{\rm gauss},
            d_{s,N,m_s}^{\rm gauss},
            v_{s,N,m_s}^{\rm gauss}
        \right),
        & s\in\mathcal S_{\rm gauss},\\[2mm]
        \left(
            b_{s,N,m_s}^{\rm ip},
            d_{s,N,m_s}^{\rm ip},
            v_{s,N,m_s}^{\rm ip}
        \right),
        & s\in\mathcal S_{\rm ip}.
    \end{cases}
\]
Define
\[
    \gamma(w)
    :=
    \lambda_{\min}
    \left(
        \sum_{s=1}^S
        w_s\alpha_s\Lambda_s
    \right)
    -
    \left\|
        \sum_{s=1}^S
        w_s\alpha_sR_s^{\rm sig}
    \right\|_{\op}.
\]

Then, for every \(\delta\in(0,1)\), with probability at least
\(1-\delta\),
\begin{equation}
\begin{aligned}
    \|\sin\Theta(\widehat Z,Z_{\rm sig})\|_F^2
    \le
    \frac{C}{\gamma(w)^2}
    \Bigg[
        &
        \left(
            \sum_{s=1}^S
            w_sb_{s,N,m_s}+
            \sum_{s\in \mathcal S_{\rm ip}}
            w_sd_{s,N,m_s}
        \right)^2\\
        &+
        \frac1\delta
        \sum_{s=1}^S
        w_s^2v_{s,N,m_s}^2
    \Bigg].
    \label{eq:main-pretrain-bound}
\end{aligned}
\end{equation}
Here \(C>0\) is a universal constant, while \(C_{s,b}\) and \(C_{s,v}\)
may depend on the fixed view-level constants in the corresponding noise
assumptions, but not on \(N\), \(m_s\), \(w\), or \(\delta\).
\end{theorem}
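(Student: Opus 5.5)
The plan is to compare \(K^{\rm obs}(w)\) with a rescaled population target \(M(w):=\sum_s w_s\alpha_s K_s^{\rm sig}\) plus a term that is harmless for the eigenspace. Then we combine a Davis--Kahan \(\sin\Theta\) bound with a Chebyshev-type tail bound on the aggregated perturbation.

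\emph{Step 1: eigenspace of the target.} By Assumption~\ref{ass:ek-shared-subspace}, \(M(w)=Z_{\rm sig}(\sum_s w_s\alpha_s\Lambda_s)Z_{\rm sig}^\top+\sum_s w_s\alpha_sR_s^{\rm sig}\). The residual part annihilates \(Z_{\rm sig}\) and is symmetric. Hence \(Z_{\rm sig}\) spans an invariant subspace whose eigenvalues are at least \(\lambda_{\min}(\sum_s w_s\alpha_s\Lambda_s)\), while the complementary eigenvalues are at most \(\|\sum_s w_s\alpha_sR_s^{\rm sig}\|_{\op}\). The separation is therefore exactly \(\gamma(w)\). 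Assumption~\ref{ass:ek-eigengap} and \(\alpha_s>0\) give \(\gamma(w)\ge \min_s\alpha_s\gamma_s>0\), provided the \(\alpha_s\) are comparable; in any case we assume \(\gamma(w)>0\). The Davis--Kahan/Yu--Wang--Samworth variant then yields
\[
\|\sin\Theta(\widehat Z,Z_{\rm sig})\|_F\le \frac{C\,\|K^{\rm obs}(w)-M(w)-c\,I\|_F}{\gamma(w)}
\]
for any scalar \(c\). Adding a multiple of the identity does not change eigenvectors, and this is how constant diagonal shifts (e.g.\ \(\sigma_s^2\nu_s/N\) in the linear case, or \(1-\alpha_s\) on the Gaussian diagonal) are absorbed.

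\emph{Step 2: per-view decomposition.} For each view we write \(K_s^{\rm obs}-\alpha_sK_s^{\rm sig}-c_sI=B_s+D_s+F_s\). Here \(B_s\) is deterministic Taylor bias, \(D_s\) is a nonconstant diagonal correction, and \(F_s\) is centered noise. The three regimes are handled as follows.

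In the linear case we expand \(u_j^\top u_k=v_j^\top v_k+\sigma_s m_s^{-1/2}(v_j^\top\varepsilon_k+v_k^\top\varepsilon_j)+\sigma_s^2m_s^{-1}\varepsilon_j^\top\varepsilon_k\). All terms except the diagonal mean \(\sigma_s^2\nu_s\) are centered. Computing \(\E\|F_s\|_F^2\), where the \(1/N^2\) normalization cancels the \(N^2\) entries, gives \(v^{\rm lin}\).

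In the nonlinear inner-product case we Taylor-expand \(f_s\) to third order around \(v_j^\top v_k\) in the increment \(\Delta_{jk}=u_j^\top u_k-v_j^\top v_k\). By the Lipschitz concentration \eqref{eq:noiseconcentrate}, \(\Delta_{jk}\) has moments of order \(\eta_s^p\). The first-order term is centered off-diagonal and contributes \(L_{s,1}^2\eta_s^2\) to the variance. The diagonal mean \(f_s'\cdot\sigma_{s,j}^2\nu_s\) is nonconstant in \(j\); its Frobenius norm is at most \(L_{s,1}\nu_s\sigma_{s,\infty}^2\sqrt N/N\), which is \(d^{\rm ip}\). The second- and third-order terms split into their means, giving \(b^{\rm ip}\) with the \(L_{s,2}\eta^2+L_{s,3}\eta^3\) scaling, and their fluctuations, giving the \(\eta^4,\eta^6\) variance terms.

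In the Gaussian case we write \(\|u_j-u_k\|^2=\|v_j-v_k\|^2+2\sigma_s\sigma_s\) cross terms \(+\,\sigma_s^2\|\varepsilon_j-\varepsilon_k\|^2/m_s\). The last term has mean \(2\sigma_s^2\), which produces the factor \(\alpha_s=e^{-2\tau_s\sigma_s^2}\) multiplying the signal kernel. We then expand \(\exp(-\tau_s x)\) to third order in the centered fluctuation. Its variance is \(\asymp\eta_s^2\), by Gaussian quadratic-form moments (the \(\operatorname{tr}\Psi^2\) term) and the linear term (\(\|\Psi\|_{\op}B_{s,N}\)). The \(j=k\) entries equal \(1/N\) exactly versus \(\alpha_s/N\), and this constant difference is absorbed into \(c_s\).

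\emph{Step 3: aggregation.} Summing over views, \(\|\sum_s w_s(B_s+D_s)\|_F\le\sum_s w_sb_s+\sum_{s\in\mathcal S_{\rm ip}}w_sd_s\). For the fluctuations, independence across views (Assumption~\ref{ass:pretrain-supervised-independence}) and centering give \(\E\|\sum_s w_sF_s\|_F^2=\sum_s w_s^2\E\|F_s\|_F^2\le\sum_s w_s^2v_s^2\). Markov's inequality then gives, with probability at least \(1-\delta\), \(\|\sum_s w_sF_s\|_F^2\le\delta^{-1}\sum_s w_s^2v_s^2\). We combine this with Step 1 via \((a+b)^2\le2a^2+2b^2\).

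The main obstacle is Step 2 for the nonlinear regimes, namely bounding \(\E\|F_s\|_F^2\) by \(v_s^2\). Entries sharing an index are correlated, so cross-covariances over pairs \((j,k),(j,l)\) must be controlled. I would first try a Hoeffding/Efron--Stein decomposition of each matrix entry into first-order pieces in \(\varepsilon_j\) and \(\varepsilon_k\) plus a degenerate remainder. The first-order pieces give row-sum contributions bounded using \(B_{s,N}\), and the remainder is controlled by the higher moment bounds.
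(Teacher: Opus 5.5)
Your overall route matches the paper's. You use a rescaled target $\sum_s w_s\alpha_sK_s^{\rm sig}$ plus a scalar identity shift, the block-diagonal eigengap argument, a per-view split into deterministic bias plus centered fluctuation, and orthogonality of the independent centered pieces across views. You finish with Markov on the squared Frobenius norm and the Yu--Wang--Samworth $\sin\Theta$ bound. No comparability of the $\alpha_s$ is needed: $\gamma(w)\ge\sum_s w_s\alpha_s\gamma_s\ge\min_s\alpha_s\gamma_s>0$ follows from superadditivity of $\lambda_{\min}$ and the triangle inequality.

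\textbf{The gap: expansion point in the nonlinear inner-product regime.} You expand $f_s$ around $v_j^\top v_k$ with increment $\Delta_{jk}=u_j^\top u_k-v_j^\top v_k$, and claim $\Delta_{jk}$ has moments of order $\eta_s^p$. This is false for $j=k$. There $\Delta_{jj}=\|u_j\|_2^2-\|v_j\|_2^2$ has mean $\nu_s\sigma_{s,j}^2$, which does not vanish as $m_s\to\infty$. Consequently the diagonal second- and third-order Taylor terms are not small.
\begin{itemize}
\item Their means can be of order $L_{s,2}\nu_s^2\sigma_{s,j}^4/N$ and $L_{s,3}\nu_s^3\sigma_{s,j}^6/N$ per entry. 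This gives a deterministic Frobenius contribution of order $(L_{s,2}+L_{s,3})/\sqrt N$, times powers of $\nu_s\sigma_{s,\infty}^2$.
\item The second-moment bounds you would use for their fluctuations contribute $O((L_{s,2}^2+L_{s,3}^2)/N)$ to $\E\|F_s\|_F^2$.
\end{itemize}
None of this has the form $L_{s,2}\eta_s^2+L_{s,3}\eta_s^3$ or $L_{s,r}^2\eta_s^{2r}$. It also cannot be absorbed into $d^{\rm ip}_{s,N,m_s}$, which carries only $L_{s,1}$, and $L_{s,1}$ does not control $L_{s,2}$ or $L_{s,3}$. So your bookkeeping, as written, does not deliver the stated $b^{\rm ip}$, $d^{\rm ip}$, $v^{\rm ip}$.

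\textbf{The fix.} Use the paper's recentering: expand around $x_{jk}:=v_j^\top v_k+\mathbf 1_{\{j=k\}}\nu_s\sigma_{s,j}^2$.
\begin{itemize}
\item The increment $u_j^\top u_k-x_{jk}$ is then centered, with $L^p$ norm $O(\eta_s)$ uniformly in $(j,k)$, diagonal included.
\item The Taylor analysis compares $K_s^{\rm obs}$ with the intermediate matrix $K_s^\sharp(j,k)=\frac1N f_s(x_{jk})$, and this produces exactly $b^{\rm ip}$ and $v^{\rm ip}$.
\item What remains is the deterministic diagonal matrix $K_s^\sharp-K_s^{\rm sig}$, with entries $\frac1N[f_s(\|v_j\|_2^2+\nu_s\sigma_{s,j}^2)-f_s(\|v_j\|_2^2)]$. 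The mean value theorem bounds these by $L_{s,1}\nu_s\sigma_{s,j}^2/N$, which is where $d^{\rm ip}$ with $L_{s,1}$ alone comes from.
\end{itemize}

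\textbf{The obstacle you flag at the end is not real.} $\E\|F_s\|_F^2=\sum_{j,k}\E F_s(j,k)^2$ contains no covariances between distinct entries. Entrywise second-moment bounds therefore suffice, exactly as in your linear computation. For Gaussian views, combine them with the averaged signal condition $\frac1{N^2}\sum_{j,k}\|v_j-v_k\|_2^{2q}\lesssim B_{s,N}^q$, since the per-entry moments are not uniformly bounded. Correlations among entries sharing an index would matter for an operator-norm bound, not for the Frobenius bound fed into Davis--Kahan. No Hoeffding or Efron--Stein decomposition is needed.
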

\paragraph{Rates and view weighting.}
Let
\(
    M:=\sum_{s=1}^S m_s
\).
We first set aside the diagonal-correction term \(d_{s,N,m_s}\) arising
from nonlinear inner-product kernels. Under bounded view-level parameters and
\(B_{s,N}=O(1)\), the scales in Theorem~\ref{thm:main-pretrain} satisfy
\(
    v_s^2=O\!\left(\frac{c_s}{m_s}\right),
    \qquad
    b_s=O(m_s^{-1})
\),
where \(c_s\) is a view-dependent constant. Thus the squared bias is of lower order than
the variance term.

A natural sample-size weighting rule is
\(
    w_s^{\rm size}:=\frac{m_s}{M}
\).
Under this choice,
\[
    \sum_{s=1}^S (w_s^{\rm size})^2v_s^2
    =
    O(M^{-1}),
    \quad
    \left(
        \sum_{s=1}^S w_s^{\rm size}b_s
    \right)^2
    =
    O(M^{-2}).
\]
Hence the variance term dominates and, at a fixed confidence level,
\(
    \|\sin\Theta(\widehat Z,Z_{\rm sig})\|_F^2
    =
    O(M^{-1})
\).

A more refined rule accounts for both the noise level and the effective
signal strength of each view.  Let
\[
    g_s:=\alpha_s\gamma_s,
    \qquad
    \gamma_s
    :=
    \lambda_{\min}(\Lambda_s)
    -
    \|R_s^{\rm sig}\|_{\op}.
\]
For a Gaussian distance-kernel view,
\(
    g_s
    =
    e^{-2\tau_s\sigma_s^2}\gamma_s
\),
while \(v_s^2\) contains terms proportional to
\(\tau_s^2\), \(\tau_s^4\), and \(\tau_s^6\).  Thus an overly sharp Gaussian
kernel can simultaneously attenuate the effective signal and increase the
stochastic error.

Ignoring the lower-order bias term leads to the variance-only surrogate
\(
    \frac{
        \sum_{s=1}^S w_s^2v_s^2
    }{
        \left(\sum_{s=1}^S w_sg_s\right)^2
    }
\).
Its minimizer is the \emph{signal-adjusted inverse-variance weighting}
\(
    w_s^{\rm IV}
    :=
    \frac{g_s/v_s^2}
    {\sum_{t=1}^S g_t/v_t^2}
\).
The resulting squared-error rate is
\[
    \|\sin\Theta(\widehat Z,Z_{\rm sig})\|_F^2
    \lesssim
    \frac1\delta
    \left(
        \sum_{s=1}^S
        \frac{g_s^2}{v_s^2}
    \right)^{-1}.
\]
Hence the quantity
\(
    \sum_{s=1}^S\frac{g_s^2}{v_s^2}
\)
acts as the aggregate effective information across views.  If \(g_s\) and
\(c_s\) are comparable across views, then
\(w_s^{\rm IV}\asymp m_s/M\).  Both weighting rules therefore achieve the
\(O(M^{-1})\) rate, while the signal-adjusted inverse-variance rule has a better leading constant.

For a nonlinear inner-product kernel, there is an additional deterministic
diagonal correction whose contribution to the squared subspace error is
\(O(N^{-1})\).  The overall rate is therefore
\(
    O\!\left(M^{-1}+N^{-1}\right)
\).
This term is absent in the
homogeneous linear-kernel regime because the diagonal correction is a scalar
identity shift and can be absorbed into the target without changing its
eigenspace.  Diagonal-imputation methods developed for heteroskedastic
PCA~\citep{yan2024inference} suggest a possible route for estimating and
removing the nonconstant diagonal correction.

We particularly highlight the linear-kernel regime because it provides a natural
leading-order approximation to high-dimensional random kernel matrices.
\citet{el2010spectrum} shows that the leading spectral behavior of many
commonly used kernel matrices in high dimensions is governed by
covariance-type linear matrices.

\subsection{End-to-end risk bound}
\label{subsec:end-to-end-theory}

We now turn to the end-to-end bound connecting pretraining to the downstream
KG predictor from Section~\ref{subsec:supervised-model}, with the following assumptions. These conditions control
supervised generalization, coordinate transfer, neural approximation, and
identifiability, respectively.
\begin{assumption}[Supervised sampling]
\label{ass:sup-sampling-app}
Conditionally on $X_1,\ldots,X_n$, the noises $\eps_i$ in \eqref{eq:supervised-label-model} are independent and sub-Gaussian with variance proxy $\sigma_m^2$.  The target is bounded: $\sup_{x\in\cX}|\gamma(x)|\le B$.  For out-of-sample bounds, $X_1,\ldots,X_n$ are i.i.d.\ from $P$.
\end{assumption}

\begin{assumption}[Smooth latent KG model]
\label{ass:smooth-kg-app}
There exist $Z^\star=(z_1^\star,\ldots,z_N^\star)^\top\in\R^{N\times d}$, a radius $R_\star$, a smoothness level $\beta\ge1$, and functions $g_r^\star:\mathcal D_\star\to[-B,B]$, where
\[
    \mathcal D_\star:=[-R_\star,R_\star]^d\times[-R_\star,R_\star]^d,
\]
such that
\[
    \gamma(h,r,t)=g_r^\star(z_h^\star,z_t^\star),
    \qquad
    \|g_r^\star\|_{W^{\beta,\infty}}\le S_\beta.
\]
Here $W^{\beta,\infty}(\mathcal D_\star)$ denotes the Sobolev space of
functions on $\mathcal D_\star$ whose weak derivatives up to order $\beta$
are essentially bounded, with norm
\[
    \|g\|_{W^{\beta,\infty}(\mathcal D_\star)}
    :=
    \max_{|\alpha|\le \beta}
    \|D^\alpha g\|_{L^\infty(\mathcal D_\star)} .
\]
\end{assumption}
We take \(R_\star\) large enough so that the true latent coordinates and the
linearly transformed KPCA coordinates considered in the proof lie in
\([-R_\star,R_\star]^d\). Equivalently, by a Sobolev extension argument, all
bounds may be stated on a fixed enlarged cube with constants changed only by
problem parameters.
\begin{assumption}[KPCA alignment with the true latent span]
\label{ass:alignment-app}
Let $Z_{\rm sig}$ be the shared pure-signal population entity eigenspace.  There exists $A_\star$ with $\|A_\star\|_{\op}\le\kappa_A$ and $\|A_\star^{-1}\|_{\op}\le\kappa_A$ such that
\[
    Z_{\rm sig}=Z^\star A_\star.
\]
\end{assumption}
We remark that this informative-transfer condition is very broad and allows for different feature mappings between datasets.
It also only requires
subspace-level compatibility between the population pretraining geometry and
the downstream KG geometry, not an exact semantic match between views and
relations or a correctly specified parametric KG scoring function.  The matrix
$A_\star$ accounts for the usual change-of-basis non-identifiability of
spectral representations; the first affine layer of a ReLU network can absorb this transformation.

For any predictor $q:\mathcal X\to\R$,
define the sample mean squared error (MSE) and its expectation:
\begin{equation}
\|q-\gamma\|_n^2
=
\frac1n\sum_{i=1}^n\{q(X_i)-\gamma(X_i)\}^2
\quad\text{and}\quad
\|q-\gamma\|_P^2
=
\E_{X\sim P}\{q(X)-\gamma(X)\}^2 ,
\label{eq:supervised-risks}
\end{equation}
where $P$ is the target distribution over triples.

We characterize the complexity of the neural KG model class by its
pseudo-dimension
\[
    p :=
    \operatorname{Pdim}\{x\mapsto f_\theta(Z;x):\theta\in\Theta\},
\]
where \(\operatorname{Pdim}\) denotes pseudo-dimension, the real-valued
analogue of VC dimension, and the class is defined for fixed
\(Z\in\mathbb R^{N\times d}\) over the \(K\) relation-wise ReLU networks.

\begin{theorem}[End-to-end oracle inequality for pretrained neural KG learning]
\label{thm:main-end2end}
Suppose Assumptions~\ref{ass:ek-info-noise}--\ref{ass:alignment-app} hold, $p\le n$, and $D_{\rm net}\gtrsim \log W$.  With probability at least $1-10(p/en)^{p}-\delta$ over both the unlabeled
pretraining sample and the labeled sample,
\begin{align}
\|\widehat f(\widehat Z;\cdot)-\gamma\|_P^2
\le
&
\underbrace{C_1\left(\frac{W}{\log W}\right)^{-\beta/d}}_{\text{neural approximation error}}
+
\underbrace{C_2 \left(\sigma_m^2+B^2 \right)\frac{KD_{\rm net}W\log W}{n}\log\frac{en}{KD_{\rm net}W\log W}}_{\text{supervised estimation error}}\notag
\\&+
\underbrace{\frac{C_3L_\star^2\kappa_A^2}{\gamma(w)^2}
\left({\left(\sum_{s=1}^S w_s b_{s,N,m_s}+\sum_{s\in \mathcal S_{\rm ip}} w_sd_{s,N,m_s}\right)^{2}+\frac{1}{\delta}\sum_{s=1}^Sw_s^2v^2_{s,N,m_s}}\right)
        }_{\text{pretraining error}}\notag
\\&+
\underbrace{C_4\delta_{\rm opt}}_{\text{optimization error}}.
\label{eq:main-end2end-bound}
\end{align}
Here $L_\star$ is a Lipschitz constant for the smooth oracle surfaces; by
Assumption~\ref{ass:smooth-kg-app} it can be chosen so that $L_\star\le C(d)S_\beta$.
Also, $C_1,\ldots,C_4$ are constants specified in Appendix~\ref{app:proof-end2end}.
\end{theorem}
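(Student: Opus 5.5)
The plan is to condition on the pretraining sample, treat $\widehat Z$ as a fixed design input to the supervised stage, and then combine three pieces: an oracle inequality for approximate ERM over the frozen-coordinate class, an approximation bound for some network in $\mathcal F_{\widehat Z}(D_{\rm net},W,B)$, and Theorem~\ref{thm:main-pretrain} to control how far $\widehat Z$ is from the true latent span. By Assumption~\ref{ass:pretrain-supervised-independence}, $\widehat Z$ is independent of $\mathcal D_{\rm sup}$. So conditionally on the pretraining sample, the class $\mathcal F_{\widehat Z}$ is deterministic with pseudo-dimension $p\lesssim K D_{\rm net}W\log W$, using the Bartlett et al.\ bound for each of the $K$ relation-wise networks and a union over relations. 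A union bound over the two sources of randomness produces the probability $1-10(p/en)^p-\delta$.

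\textbf{Step 1 (supervised oracle inequality).} Conditionally on $\widehat Z$, I will use the standard fixed-class analysis of \citet{liu2024representation}: bounded targets and sub-Gaussian noise (Assumption~\ref{ass:sup-sampling-app}), a localized empirical-process bound via pseudo-dimension, and the $\delta_{\rm opt}$-approximate ERM property \eqref{eq:supervised-approx-erm}. This gives, for every $f\in\mathcal F_{\widehat Z}$,
\[
\|\widehat f-\gamma\|_P^2\lesssim \|f-\gamma\|_P^2+(\sigma_m^2+B^2)\frac{p}{n}\log\frac{en}{p}+\delta_{\rm opt},
\]
with probability at least $1-10(p/en)^p$. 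It therefore suffices to exhibit a good $f$, and I will bound $\|f-\gamma\|_P^2\le\|f-\gamma\|_\infty^2$ over entity coordinates.

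\textbf{Step 2 (coordinate transfer).} Let $O$ be the orthogonal Procrustes alignment with $\|\widehat Z O-Z_{\rm sig}\|_F\le\sqrt2\|\sin\Theta(\widehat Z,Z_{\rm sig})\|_F$. By Assumption~\ref{ass:alignment-app}, $Z^\star=Z_{\rm sig}A_\star^{-1}$, so define $\tilde Z:=\widehat Z O A_\star^{-1}$. Then
\[
\|\tilde Z-Z^\star\|_F\le\sqrt2\,\kappa_A\|\sin\Theta\|_F .
\]
The candidate is $f_r(a,b):=\Pi_B\{\tilde g_r(A_\star^{-\top}O^\top a,A_\star^{-\top}O^\top b)\}$, where $\tilde g_r$ is a ReLU approximant of $g_r^\star$. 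The linear map is absorbed into the first affine layer, so $f_r(\widehat z_h,\widehat z_t)=\Pi_B\tilde g_r(\tilde z_h,\tilde z_t)$. Then
\[
|f(\widehat Z;x)-\gamma(x)|\le \|\tilde g_r-g_r^\star\|_\infty+L_\star(\|\tilde z_h-z_h^\star\|+\|\tilde z_t-z_t^\star\|).
\]
The first term is at most $C(W/\log W)^{-\beta/d}$ with depth $\gtrsim\log W$, by Yarotsky/Lu et al.\ applied on $\mathcal D_\star$, taking $R_\star$ large as stipulated.

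\textbf{Main obstacle.} The hardest step is converting the Frobenius subspace error into a risk term. The crude bound $\max_j\|\tilde z_j-z_j^\star\|\le\|\tilde Z-Z^\star\|_F$ already gives a squared contribution $\lesssim L_\star^2\kappa_A^2\|\sin\Theta\|_F^2$. That is exactly the form of the pretraining term once Theorem~\ref{thm:main-pretrain} is substituted with probability $1-\delta$, so I will use this crude row bound rather than attempt a $2\to\infty$ refinement. The points to check are that $\tilde z_j$ stays in the cube, handled by the choice of $R_\star$, and that the Lipschitz constant $L_\star\le C(d)S_\beta$ follows from $\beta\ge1$. Squaring the sum with $(a+b+c)^2\le 3(a^2+b^2+c^2)$ and collecting constants into $C_1,\dots,C_4$ then yields \eqref{eq:main-end2end-bound}.
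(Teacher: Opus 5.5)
Your proposal is correct and follows essentially the same route as the paper's proof. You condition on the pretraining sample, apply the fixed-class oracle inequality of \citet{liu2024representation} with $p\lesssim KD_{\rm net}W\log W$, align $\widehat Z$ to $Z^\star$ via Procrustes and $A_\star^{-1}$ absorbed into the first affine layer, bound the row error crudely by $\|\widetilde Z-Z^\star\|_F\lesssim\kappa_A\|\sin\Theta(\widehat Z,Z_{\rm sig})\|_F$, and combine with Theorem~\ref{thm:main-pretrain} through a union (equivalently, conditioning) bound on the probabilities.

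One small correction: each $g_r^\star$ has $2d$ inputs, so Yarotsky's construction gives uniform error $C(W/\log W)^{-\beta/(2d)}$, not $C(W/\log W)^{-\beta/d}$. It is the square of this rate that produces the stated approximation term.
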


Taking \(D_{\rm net}\asymp \log W\) gives the neural
approximation error rate \((W/\log W)^{-\beta/d}\), while the supervised
estimation term scales as \(KW/n\) up to logarithmic factors.
The optimal model size is $W\asymp(n/K)^{d/(\beta+d)}$ up to logarithmic factors, and pretraining is not the bottleneck when its variance and bias terms are smaller than the resulting supervised rate.

Complete proofs of both theorems are deferred to Appendix~\ref{app:proofs}: the
proof of Theorem~\ref{thm:main-pretrain} establishes the view-level kernel
approximation bounds for each admissible regime and combines them with the
Davis--Kahan sin-theta theorem, while the proof of
Theorem~\ref{thm:main-end2end}, given in Appendix~\ref{app:proof-end2end},
builds on Theorem~\ref{thm:main-pretrain} to assemble the four-term
oracle inequality via a bias-variance decomposition of the end-to-end
prediction risk.

\section{Experiments}
Our empirical evaluation has two parts. Section~\ref{sec:simulations} uses
controlled simulations with known ground truth to directly check the rates
and qualitative predictions of Theorems~\ref{thm:main-pretrain}
and~\ref{thm:main-end2end}: subspace recovery, view weighting, and the
end-to-end risk decomposition. Section~\ref{sec:realdata} then turns to two
real knowledge graphs, WordNet and PrimeKG, to test whether the same
pretraining and weighting mechanisms translate into link-prediction gains
under realistic side information, relation heterogeneity, and label scarcity.

\subsection{Simulation study}
\label{sec:simulations}
We use three controlled simulations to check these predictions in turn.
Experiments~1 and 2 test the pretraining component of the theory:
Experiment~1 verifies the predicted \(m^{-1/2}\) subspace recovery rate,
while Experiment~2 evaluates residual-gap view weighting under heterogeneous
view quality.  Experiment~3 tests the end-to-end risk decomposition by
varying the labeled sample size \(n\), the pretraining dimension \(m\), and
the ReLU width \(W\), thereby isolating the supervised estimation,
pretraining, and approximation components of the bound.

Together, these experiments verify two main empirical implications of the
theory: multi-view KPCA recovers the shared signal subspace at the predicted
rate, and downstream test MSE changes according to the approximation,
estimation, and pretraining terms in the risk decomposition.  We report the
complete data-generating choices and auxiliary diagnostics below.  The known
ground truth allows us to evaluate the population subspace, weighting proxies,
and population supervised risk directly.  The
simulations are intended to test rates, qualitative regimes, and
ranking behavior, not exact finite-sample constants.

\paragraph{Pretraining recovery and view weighting.}
For pretraining, we use the linear kernel model:
\[u_{s,j}=v_{s,j}+\frac{1}{\sqrt{m_s}}\varepsilon_{s,j}, \qquad j=1,\ldots,N, \] where \(u_{s,j}\in\R^{m_s}\) is the observed entity vector for entity \(j\) in view \(s\), \(v_{s,j}\) is the corresponding noiseless vector, and \(\varepsilon_{s,j}\) is Gaussian noise across entities. We use \[ N=60,\qquad d=4,\qquad S=4, \] and a shared latent subspace \(Z_\star\in\R^{N\times d}\) satisfying \(Z_\star^\top Z_\star=I_d\) (playing the role of $Z_{\rm sig}$ from the theory). For each view, \[ V_s=Z_\star B_s^\top, \qquad B_s^\top B_s=N\Lambda_s, \] so that the noiseless linear kernel satisfies \( \frac{1}{N}V_sV_s^\top = Z_\star\Lambda_s Z_\star^\top \).
The view-specific signal spectra are \[ \operatorname{diag}(\Lambda_s)\in \{(10,7,4.5,2),\, (7,5,3,1.5),\, (4.8,3.5,2,1),\, (8.5,6,3.5,1.8)\}. \]
Each 4-tuple gives the \(d=4\) nonzero signal eigenvalues for one view; there
are four tuples because~\(S=4\).

We compare two noise regimes. \\
(i) In the isotropic regime, \[ \varepsilon_{s,j} \sim N(0,\sigma^2 I), \qquad \sigma=0.25 . \] (ii) In the anisotropic regime, \[ \varepsilon_{s,j}\sim N(0,\Sigma_s), \qquad \Sigma_s=\sigma_s^2 I+Q_sD_sQ_s^\top , \] where the anisotropic low-rank component is fixed for each view and is independent of \(Z_\star\). The view-specific base noise levels are \[ (\sigma_s)=(0.15,0.20,0.25,0.30), \] with low-rank scales \( (0.8,1.2,2.0,2.8) \) and ranks \(6,6,8,8\), respectively.

For a given weight vector \(w\), we form the aggregated empirical linear kernel
\[
    \widehat K(w)
    =
    \sum_{s=1}^S w_s\,\frac{1}{N}U_sU_s^\top .
\] The reported subspace recovery error is the operator-norm principal-angle error \[ \|\sin\Theta(\widehat Z,Z_\star)\|_{\op}, \] where \(\widehat Z\) denotes the leading \(d\)-dimensional eigenspace of the aggregated empirical kernel \(\widehat K(w)\).  All reported means and standard deviations are computed
over independent noise realizations, with \(Z_\star\), the noiseless view
matrices \(V_s\), and the noise covariance matrices fixed throughout each
experiment.

Experiment~1 varies the common entity dimension \[ m_s=m\in\{50,100,200,400,800\} \] under uniform view weights. For each value of \(m\) and each noise
regime, we  compute
the resulting subspace error over 20 runs. Figure~\ref{fig:sim-pretrain-main}(a) visualizes the result. The isotropic-noise curve follows the predicted \(m^{-1/2}\) rate almost exactly: the mean subspace error decreases from \(0.0141\) at \(m=50\) to \(0.0034\) at \(m=800\), with log--log slope \(-0.506\) and \(R^2=0.999\). The anisotropic-noise curve is higher for small and moderate \(m\); thus, in the linear-kernel
setting, anisotropy increases the finite-sample fluctuation constant
instead of creating a persistent population eigenspace bias.  In the long run, the decay of the rate remains of order
\(m^{-1/2}\), with constants depending on the spectral size and effective rank
of the noise covariance (Theorem~\ref{thm:main-pretrain}(1)).  Consistently, the mean error decreases from
\(0.0318\) to \(0.0037\) and the
anisotropic curve approaches the isotropic curve as \(m\) grows. 

Experiment~2 fixes heterogeneous entity dimensions \( (m_1,m_2,m_3,m_4)=(800,400,200,100) \) under the anisotropic noise setting and compares uniform weights, inverse-variance weights, and a simulation oracle.
The inverse-variance weighting rule is a simulation-level implementation of the
theory-guided inverse-variance weighting $w^{IV}$.  It assigns
\[
    w_s
    \propto
    \frac{\widehat\gamma_s}{\widehat v_s^2+10^{-9}}.
\]
 
We use the variance proxy
\[
    \widehat v_s^2
    =
    C_sL_s
    \left[
  \widehat\sigma_{\infty,s}
        {\frac{B_s}{m_s}}
+\frac{\widehat\sigma_{\infty,s}^2}{{m_s}}
    \right],
\]
where
\[
    B_s=\max_{j,k}|v_{s,j}^{\top}v_{s,k}|,
    \qquad
    \widehat\sigma_{\infty,s}=\|\Sigma_s\|_{\op}^{1/2}.
\]  The eigengap proxy
\(\widehat\gamma_s\) is computed from the eigengap of the signal kernel
\(
\frac{1}{N}V_sV_s^\top\).
For reference, we also report a simulation oracle, a grid-search diagnostic over a finite candidate set
of simplex weights (this is not guaranteed to be the exact optimizer over the
continuous simplex).  We use a dense candidate grid only to provide a
near-oracle benchmark for comparing weighting rules; it is not a feasible
procedure in real data because it uses the known \(Z_\star\).

Figure~\ref{fig:sim-pretrain-main}(b)--(c) shows that the theory-guided rule approaches
the oracle performance and, in particular, that the inverse-variance rule downweights
the noisiest view.
\begin{figure}[ht]
    \centering
    \begin{subfigure}[t]{0.32\textwidth}
        \centering
        \includegraphics[width=\linewidth]{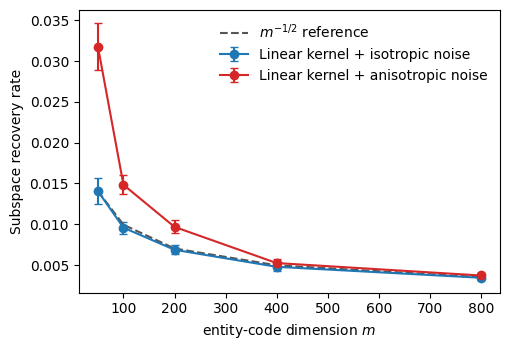}
        \caption{Subspace error vs.\ $m$ (Exp.~1).}
    \end{subfigure}
    \hfill
    \begin{subfigure}[t]{0.32\textwidth}
        \centering
        \includegraphics[width=\linewidth]{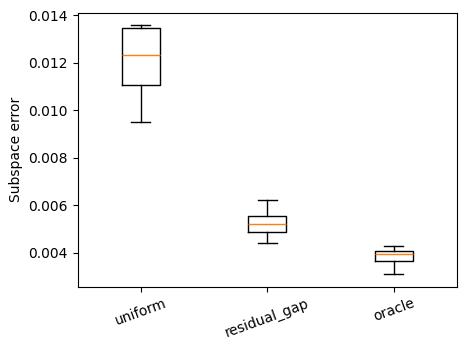}
        \caption{Error by weighting rule (Exp.~2).}
    \end{subfigure}
    \hfill
    \begin{subfigure}[t]{0.32\textwidth}
        \centering
        \includegraphics[width=\linewidth]{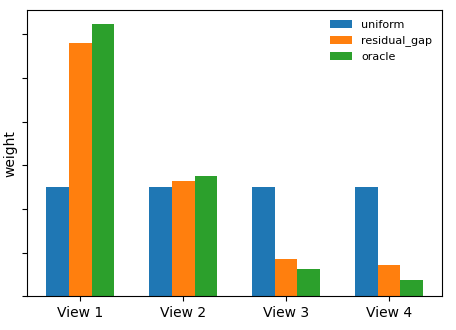}
        \caption{View weights by rule (Exp.~2).}
    \end{subfigure}
    \caption{Pretraining simulations (Experiments~1--2). \textit{Left}: subspace error decays at $m^{-1/2}$. \textit{Center}: the inverse-variance weighting rule tracks the calibration oracle. \textit{Right}: inverse-variance weighting downweights the noisiest view.}
    \label{fig:sim-pretrain-main}
\end{figure}

\paragraph{End-to-end supervised risk.}
Experiment~3 uses pretrained embeddings and learns bounded
relation-wise ReLU networks on a nonlinear triple-score teacher with
relation-specific bilinear interactions and Gaussian label noise
\(N(0,0.1^2)\).  We vary the labeled sample size \(n\), pretraining sample size \(m\), and ReLU width \(W\); the results are shown in
Figure~\ref{fig:exp3}. The oracle baseline uses the true
latent entity coordinates from the simulator while keeping the same supervised training procedure.

The pretraining model in Experiment~3 uses the same linear-kernel multi-view
factor construction as in Experiments~1--2, with \(N=60\) entities,
latent rank \(d=4\), and \(S=4\) views.  The view-specific spectra
\(\Lambda_s\) are the same as above.  In the reported simulations,
the pretraining samples are generated under the isotropic-noise regime, using
uniform view weights. For each pretraining run, the embedding is obtained by forming the aggregated
linear kernel from the noisy entity matrices and taking its leading
\(d\)-dimensional eigenspace; the first \(N_{\mathrm{ent}}=30\) rows are then
used as frozen entity representations for the downstream triple-score task.

We use $K_{\mathrm{rel}}=3$ relation types.
The true triple score is generated by a smooth one-hidden-layer tanh teacher with relation-specific bilinear terms:
\[
    \gamma(h,r,t)
    =
    u^\top \tanh\!\left(
    A_{\rm teach}
    \begin{bmatrix}
    z_h\\ z_t\\ e_r
    \end{bmatrix}
    +b
    \right)
    +
    \langle \beta_r, z_h\odot z_t\rangle
    +c_r,
\]
where \(e_r\) is the one-hot encoding of relation \(r\), and
\(\beta_r\in\R^d\) and \(c_r\in\R\) are relation-specific
parameters. 
Training labels are noisy observations,
\[
    Y_i=\gamma(h_i,r_i,t_i)+\varepsilon_i,
    \qquad
    \varepsilon_i\sim N(0,0.1^2).
\]
Test MSE is evaluated against the noiseless teacher score $\gamma(h,r,t)$ on uniformly sampled held-out triples.
Thus the reported risk measures estimation, approximation, and representation errors relative to the underlying true score function.

We compare two representations.
The oracle representation uses the true latent entity coordinates $Z_{\rm oracle}=Z_{\rm ent}$ and trains the same relation-wise ReLU heads as all other methods, thereby removing pretraining error and isolating the supervised approximation and estimation components.
The multi-view PCA representation uses frozen coordinates estimated from the multi-view pretraining samples.
The gap between multi-view PCA and the oracle measures the downstream cost of imperfect representation recovery.

The labeled-sample sweep fixes $m=400$ and $W=32$ and varies
\[
    n\in\{100,300,500,700,900,1100\}.
\]
The pretraining-dimension sweep fixes $n=1100$ and $W=32$ and varies
\[
    m\in\{40,80,100,120,150,400\}.
\]
The width sweep fixes $n=1000$ and $m=400$ and varies
\[
    W\in\{4,8,16,32,64\}.
\]
For each configuration we average over $10$ random supervised-training seeds; each run uses $500$ uniformly sampled held-out triples for test MSE.

As the number of labeled triples increases,
test MSE decreases consistently with the supervised-estimation term
(Figure~\ref{fig:sim-main-exp3-n-risk}).
Multi-view KPCA remains above the oracle throughout, with the gap representing the downstream
cost of imperfect coordinate recovery; this gap shrinks as the pretraining sample grows
(Figure~\ref{fig:sim-main-exp3-m-risk}).
The width sweep reveals a bias--variance trade-off in the relation heads: wider networks improve
expressiveness but raise estimation variance when labeled data are limited
(Figure~\ref{fig:sim-main-exp3-w-risk}).
All three trends match the risk decomposition in Theorem~\ref{thm:main-end2end}, where
supervised sample complexity, approximation capacity, and pretraining error jointly determine
the final risk.

\begin{figure}[ht]
\centering
\begin{subfigure}[t]{0.32\textwidth}
\centering
\includegraphics[width=\linewidth]{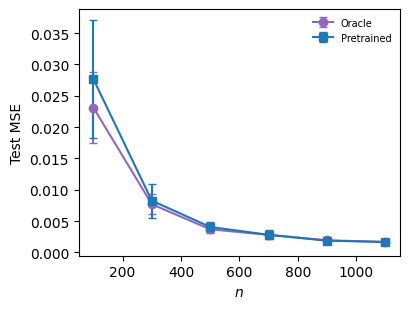}
\caption{MSE vs.\ $n$.}
\label{fig:sim-main-exp3-n-risk}
\end{subfigure}
\hfill
\begin{subfigure}[t]{0.32\textwidth}
\centering
\includegraphics[width=\linewidth]{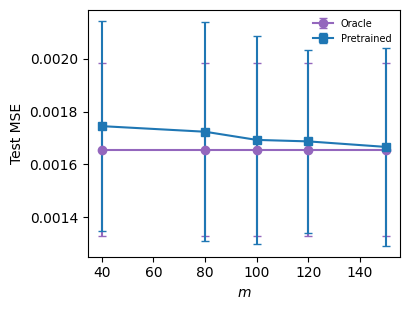}
\caption{MSE vs.\ $m$.}
\label{fig:sim-main-exp3-m-risk}
\end{subfigure}
\hfill
\begin{subfigure}[t]{0.32\textwidth}
\centering
\includegraphics[width=\linewidth]{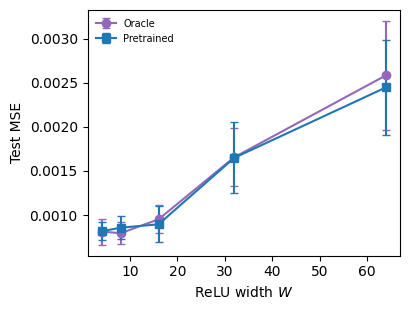}
\caption{MSE vs.\ $W$.}
\label{fig:sim-main-exp3-w-risk}
\end{subfigure}
\caption{End-to-end risk decomposition (Experiment~3). The $n$-, $m$-, and $W$-sweeps probe the supervised estimation, pretraining, and approximation terms of Theorem~\ref{thm:main-end2end}; the oracle uses the true $Z_{\rm oracle}$ in place of $\widehat{Z}$.}
\label{fig:exp3}
\end{figure}

\subsection{Real-data evaluation}
\label{sec:realdata}
We now check whether these mechanisms carry over to real knowledge graphs. WordNet is a lexical-semantic graph whose synset definitions provide clean textual side information, while PrimeKG is a large biomedical graph with heterogeneous entity and relation types. These two datasets stress different aspects of the theory: WordNet tests whether pretrained semantic coordinates are useful when labeled triples are relatively sparse compared with the number of entities, and PrimeKG tests whether multi-view pretraining remains helpful in a large-data biomedical setting with skewed relation frequencies and harder relation types. Appendix~\ref{app:realdata} retains the full dataset-description tables and implementation/setup details for reproducibility.

\subsubsection{Setup}
\label{subsec:realdata-setup}

\paragraph{Datasets.}
WordNet is evaluated as a synset-level KG. Each entity is a WordNet synset and each edge is a semantic or lexical relation derived from WordNet pointer relations. The text associated with an entity is constructed from the synset type, representative lemma information, and the gloss/definition. After preprocessing, WordNet contains $116{,}566$ entities, $33$ base relations, and $373{,}330$ observed positive triples. We use a relation-balanced edge split with $60\%/20\%/20\%$ train/validation/test ratios.

PrimeKG is evaluated as a biomedical KG with entities such as drugs, diseases, phenotypes, proteins, exposures, molecular functions, cellular components, biological processes, and anatomical terms. For each entity, we construct a textual description from its name, entity type, and available source metadata. After preprocessing, PrimeKG contains $90{,}067$ entities, $30$ base relations, and $8{,}100{,}498$ observed positive triples. PrimeKG uses the same validation ratio as WordNet but includes a hard-enriched test set. We first train an initial fixed-embedding model and compute a relation-level difficulty score from its per-relation training loss using an MSE--variance criterion. Relations with the largest difficulty scores are assigned to the hard group. Easy relations follow a $60\%/20\%/20\%$ edge split, while hard relations use the same validation ratio but an enlarged test ratio. This produces $169{,}288$ hard-test positive triples and $1{,}552{,}399$ easy-test positive triples. The hard-relation subset is the primary PrimeKG stress test because it reduces the dominance of frequent or easier biomedical relations. The compact main-text statistics are summarized in Table~\ref{tab:main-realdata-stats}; the appendix keeps the more detailed per-dataset descriptions and setup table.

\begin{table*}[t!]
\centering
\small
\caption{Real-data statistics after preprocessing and splitting. PrimeKG uses a hard-enriched test split; hard relations are selected by an initial fixed-embedding model's per-relation MSE--variance difficulty criterion.}
\label{tab:main-realdata-stats}
\setlength{\tabcolsep}{4pt}
\begin{tabular}{p{0.24\linewidth}p{0.28\linewidth}p{0.42\linewidth}}
\toprule
\textbf{Quantity} & \textbf{WordNet} & \textbf{PrimeKG} \\
\midrule
Entities & $116{,}566$ synsets & $90{,}067$ biomedical entities \\
Relation types & $33$ & $30$ \\
Observed positive triples & $373{,}330$ & $8{,}100{,}498$ \\
Train positive triples & $223{,}998$ & $4{,}758{,}721$ \\
Validation positive triples & $74{,}666$ & $1{,}620{,}090$ \\
Test positive triples & $74{,}666$ & $1{,}721{,}687$ \\
Hard-test positive triples & -- & $169{,}288$ \\
Easy-test positive triples & -- & $1{,}552{,}399$ \\
Node text & Synset type, representative lemma(s), and gloss/definition & Entity name, entity type, and available source metadata \\
Side-information views & all-mpnet, MiniLM, BGE, E5 & PubMedBERT, BioBERT, SciBERT \\
Hard relations & -- & \texttt{disease\_phenotype\_negative}, \texttt{anatomy\_anatomy}, \texttt{disease\_disease}, \texttt{exposure\_protein}, \texttt{molfunc\_molfunc}, \texttt{cellcomp\_cellcomp}, \texttt{phenotype\_phenotype}, \texttt{phenotype\_protein}, \texttt{exposure\_bioprocess}, \texttt{bioprocess\_bioprocess}, \texttt{drug\_protein} \\
\bottomrule
\end{tabular}
\end{table*}

\paragraph{Negative sampling and relation-balanced objective.}
Both datasets contain observed positive triples only. We construct a balanced binary link-prediction task by pairing each positive triple with a corrupted negative triple. For a positive triple $(h,r,t)$, the negative triple is generated by replacing either the head or the tail while keeping the relation fixed. Whenever entity types are available, the replacement entity is sampled from the same type as the corrupted endpoint. Type-matched corruption prevents the task from being solved through trivial type mismatch; this is particularly important for PrimeKG because many biomedical relations connect specific entity categories.

The relation frequencies in both graphs are highly skewed, so a uniformly averaged loss can be dominated by frequent relation types. Let $\mathcal D_r$ denote the examples with relation $r$ in the relevant split, and let $n_r=|\mathcal D_r|$. We assign relation-balanced weights
\begin{equation}
\label{eq:main-relation-balanced-weight}
    \omega_r
    =
    \frac{n_r^{-1}}{\sum_{r'} n_{r'}^{-1}},
\end{equation}
and optimize the weighted empirical objective
\begin{equation}
\label{eq:main-relation-balanced-loss}
    \widehat L(\theta)
    =
    \sum_{(x_i,y_i)\in\mathcal D_{\rm train}}
    \omega_{r_i}\,\ell\{f_\theta(x_i),y_i\},
\end{equation}
where $x_i=(h_i,r_i,t_i)$ and $\ell$ is the mean squared error used by the corresponding downstream model. We use the validation split only for checkpoint selection and view-weight selection, and it is never included in training or final test evaluation.

\subsubsection{Models and metrics}
\label{subsec:realdata-models-metrics}

We compare against TransE~\citep{bordes2013translating}, DistMult~\citep{yang2014embedding}, ComplEx~\citep{trouillon2016complex}, and RotatE~\citep{sun2019rotate}. These graph-only baselines learn entity and relation embeddings directly from the supervised training triples and do not use entity text or other side-information views. They are evaluated with the same splits, negative-sampling protocol, relation-balanced weighting, and metrics as our neural KG models.

Our \modelname{} architecture is held fixed while varying only the entity
representation: random fixed coordinates, embeddings from a single language-model
view, or multi-view kernel aggregation.  In the main setting, the pretrained
entity coordinates are frozen during supervised training, and only the
relation-specific prediction heads are optimized, in accordance with the two-stage learning setting in our theory.

Each language model is regarded as a kernel and introduces a view-specific feature map, denoted as
\[
    \Phi_s:\mathcal U_s\to\mathbb R^{p_s}.
\]
Let
\(u_{s,j}\in\mathcal U_s\) be the textual description of entity \(j\).
The encoder induces the entity kernel
\[
    k_s(u_{s,j},u_{s,k})
    :=
    \left\langle
        \Phi_s(u_{s,j}),
        \Phi_s(u_{s,k})
    \right\rangle_{\mathbb R^{p_s}}.
\]
Writing
\(
    e_{s,j}:=\Phi_s(u_{s,j})
\),
the corresponding kernel matrix is the Gram matrix of the explicit encoder
features:
\[
    \widehat K_s(j,k)
    =
    \frac1N
    \langle e_{s,j}-\bar e_s,e_{s,k}-\bar e_s\rangle.
\]

Accordingly, the encoder output is first interpreted as a feature map, and the
kernel matrix is reconstructed from pairwise similarities between the mapped
entity features.  This places the language-model views directly within the
multi-view entity-kernel framework developed above.

We aggregate the induced kernels as
\begin{equation}
\label{eq:main-realdata-linear-pca}
    \widehat K_{\rm mv}(w)
    :=
    \sum_{s=1}^S w_s\widehat K_s,
    \qquad
    w_s\ge0,
    \qquad
    \sum_{s=1}^S w_s=1.
\end{equation}

We use the leading \(d=128\) eigenvectors of the aggregated entity-kernel
matrix as the pretrained entity coordinates:
\begin{equation}
\label{eq:main-realdata-top-eig}
    \widehat K_{\rm mv}(w)u_\ell
    =
    \widehat\lambda_\ell u_\ell,
    \qquad
    \widehat\lambda_1
    \ge
    \widehat\lambda_2
    \ge\cdots,
    \qquad
    \widehat Z
    =
    [u_1,\ldots,u_{128}]
    \in\mathbb R^{N\times128}.
\end{equation}

For WordNet, validation selects the uniform weight over the four general-purpose English views, $(1:1:1:1)$. For PrimeKG, validation selects $(8:1:1)$ over PubMedBERT, BioBERT, and SciBERT, respectively, reflecting stronger validation performance of PubMedBERT on the biomedical hard-relation protocol. Unless otherwise stated, all results are averaged over $10$ random seeds and reported as mean $\pm$ standard deviation. WordNet models are trained for $50$ epochs; PrimeKG models are trained for $40$ epochs.

\subsubsection{Fixed-embedding results}
\label{subsec:realdata-main-results}

\paragraph{WordNet.}
Figure~\ref{fig:wordnet} summarizes the WordNet fixed-embedding experiment. Random fixed embeddings perform poorly, especially on F1, showing that the relation-wise neural heads alone are not sufficient when the frozen coordinates contain little semantic information. In contrast, all four pretrained textual views substantially improve over random fixed embeddings and over graph-only baselines. The uniform multi-view representation is best on all three metrics, reaching $0.9508$ AUROC, $0.8618$ accuracy, and $0.8500$ F1. Relative to the strongest single view, BGE, the multi-view representation improves by $0.0028$ AUROC, $0.0029$ accuracy, and $0.0035$ F1. Relative to the best graph-only baseline in each metric, it improves by $0.1341$ AUROC, $0.1003$ accuracy, and $0.1622$ F1. The gains over single views are numerically modest but consistent, suggesting that the different semantic encoders provide complementary lexical-semantic information.
\begin{figure}[t!]
\centering
\includegraphics[width=\textwidth]{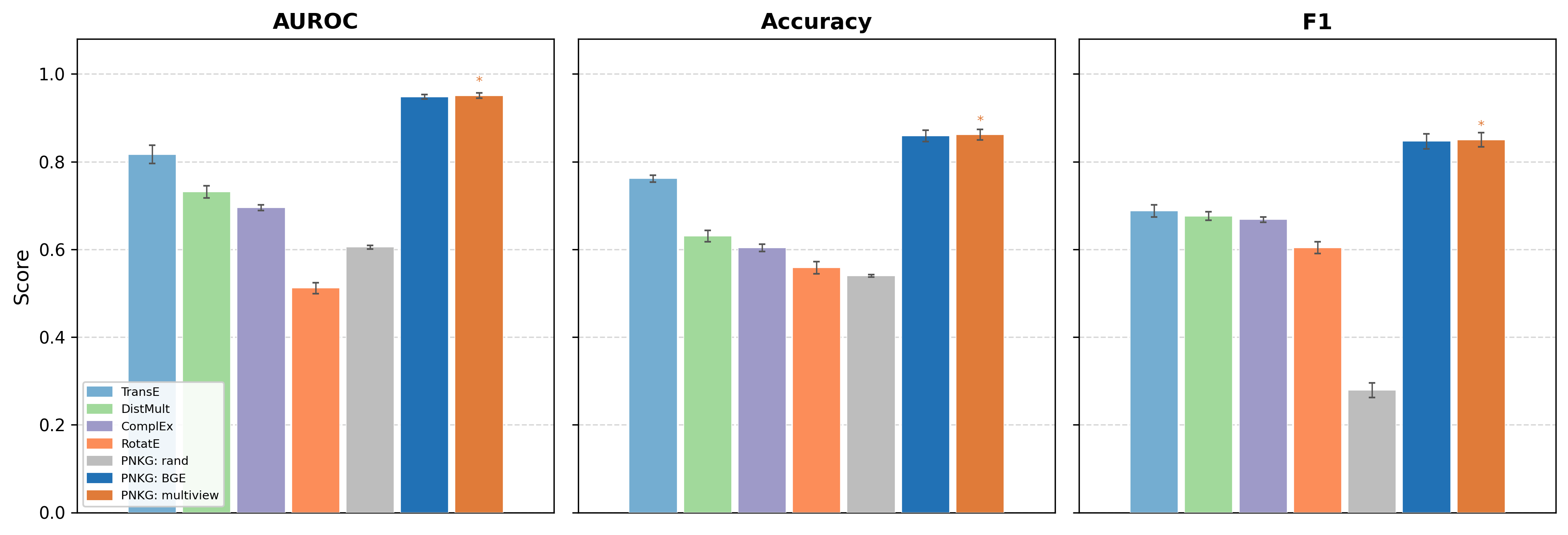}
\caption{Random edge-split results on WordNet with relation-balanced training/evaluation and fixed entity embeddings. Results are averaged over $10$ seeds and shown as mean $\pm$ standard deviation.}
\label{fig:wordnet}
\end{figure}

\begin{figure}[t!]
\centering
\includegraphics[width=\textwidth]{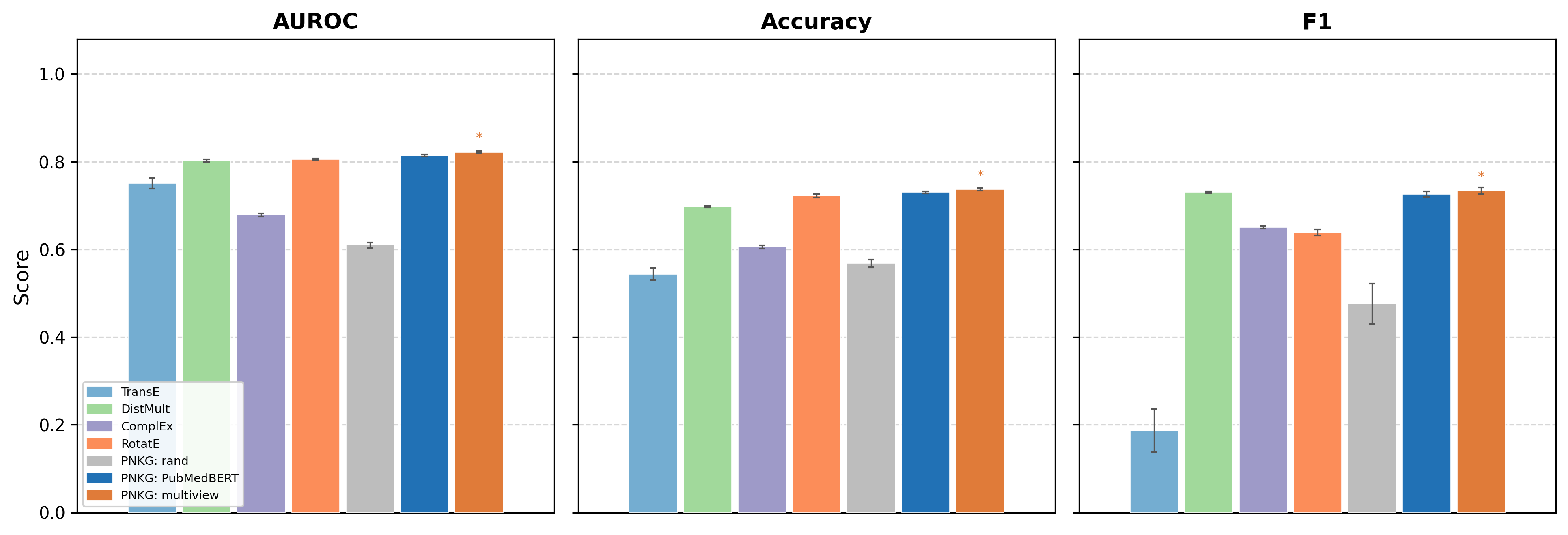}
\caption{Hard-relation-focused results on PrimeKG with relation-balanced training/evaluation and fixed entity embeddings. Results are averaged over $10$ seeds and shown as mean $\pm$ standard deviation.}
\label{fig:primekg}
\end{figure}
\paragraph{PrimeKG hard-relation subset.}
Figure~\ref{fig:primekg} reports fixed-embedding results on the PrimeKG hard-relation subset. This subset is more heterogeneous than WordNet and is designed to reduce the influence of easy high-frequency biomedical relations. The selected-weight multi-view representation, with weights $8:1:1$, is the strongest method across all three metrics, reaching $0.8223$ AUROC, $0.7368$ accuracy, and $0.7340$ F1. Compared with PubMedBERT, the best single biomedical view, it improves by $0.0086$ AUROC, $0.0069$ accuracy, and $0.0080$ F1. Compared with the best graph-only baseline in each metric, it improves by $0.0170$ AUROC, $0.0140$ accuracy, and $0.0037$ F1. The selected-weight model also improves over uniform multi-view aggregation, suggesting that the biomedical views are not equally transferable on difficult PrimeKG relations. This complements the WordNet result: uniform aggregation is sufficient when the views are similarly calibrated, whereas validation-selected weighting becomes more useful when view quality is heterogeneous.

\begin{figure}[h!]
\centering
\includegraphics[width=\textwidth]{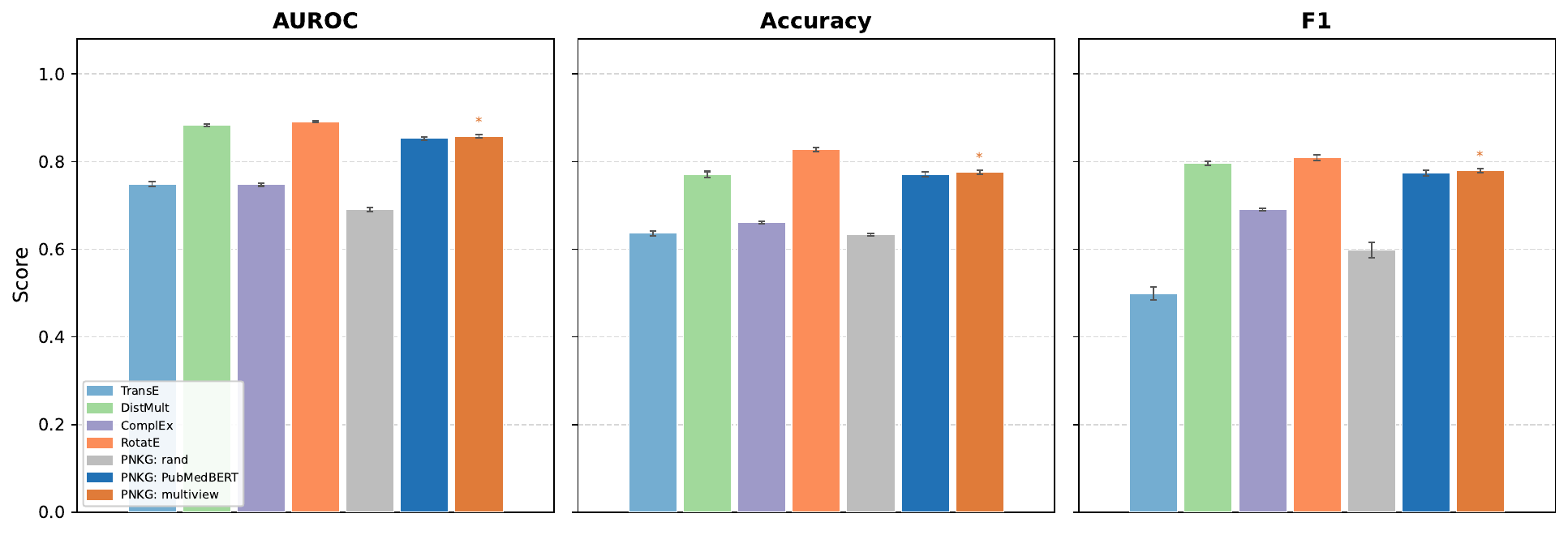}
\caption{Overall results on PrimeKG with relation-balanced training/evaluation and fixed entity embeddings. Results are averaged over $10$ seeds and shown as mean $\pm$ standard deviation.}
\label{fig:primekg-all}
\end{figure}

\paragraph{PrimeKG overall test set.}
Figure~\ref{fig:primekg-all} evaluates the fixed-embedding setting on all PrimeKG test relations. In this broader test set, RotatE is the strongest overall method, indicating that graph-only structure is highly informative for many frequent or easier PrimeKG relations. Among the fixed \modelname{} variants, selected-weight multi-view aggregation is the strongest representation-based method. It improves over the best single biomedical view, PubMedBERT, by $0.0043$ AUROC, $0.0056$ accuracy, and $0.0060$ F1. The contrast between the hard-relation results and the overall results explains why the hard-relation protocol is useful: overall metrics can be dominated by easier high-frequency biomedical relations, while the hard subset better isolates the transfer regime targeted by side-information pretraining.

The overall PrimeKG result also highlights a limitation of the fixed-embedding protocol. Because PrimeKG contains many supervised triples, graph-only models can exploit dense relation-specific structure, whereas fixed-coordinate \modelname{} variants cannot adapt their entity coordinates during supervised training. This can leave residual representation bias on relations whose predictive signal is not fully captured by text-derived side information. The trainable embedding ablations in Appendix~\ref{subsec:realdata-ablations} support this interpretation: in the large-data PrimeKG regime, allowing entity embeddings to be trainable consistently improves the neural KG models, while the fixed-embedding results remain the cleanest test of representation quality under the theory.

\section{Discussion}
\label{sec:discussion}
\paragraph{What the decomposition reveals.}
The four-term bound~\eqref{eq:main-end2end-bound} makes explicit which design choice is the bottleneck at any given combination of $n$, $m$, $W$, and $K$. Pretraining is not the bottleneck whenever the unlabeled sample is large enough that the pretraining error in~\eqref{eq:main-end2end-bound} falls below the balanced approximation--estimation rate, which is the regime of sparse labeled but abundant unlabeled data that motivates the framework. The linear dependence on $K$ reflects that each relation head must be estimated separately, quantifying a basic tension in large KGs with many heterogeneous relations and motivating parameter-sharing architectures. Experiment~3 supports the decomposition empirically: the \(n\)-, \(m\)-, and \(W\)-sweeps probe different components of the bound, and the observed trends are consistent with the predicted decomposition.

\paragraph{View weighting in theory and practice.}
The weighting discussion operationalizes an inverse-variance weighting principle:
views should receive larger weight when their variance term
\(v_{s,N,m_s}\) is small and their effective signal strength
\(\alpha_s\gamma_s\) is large. On WordNet, uniform aggregation performs best among the evaluated weight choices. On PrimeKG's hard-relation subset, validation-selected weighting improves over uniform aggregation on all metrics, consistent with the theory's qualitative motivation for nonuniform weighting when view quality is heterogeneous.

\paragraph{Limitations and future directions.}
Several aspects invite further development. The frozen-coordinate assumption simplifies the supervised analysis but leaves the end-to-end bound for fine-tuned embeddings open. The shared-subspace condition~\eqref{eq:shared-subspace-main} may hold only approximately when views encode complementary rather than redundant signals, and extending the theory to this misspecified regime is a natural next step. The i.i.d.\ triple sampling assumption also excludes entity-inductive settings where test entities are unseen at pretraining time. On the practical side, the view-weighting rules depend on unknown
kernel parameters \(v_{s,N,m_s}\) and effective eigengap contributions
\(\alpha_s\gamma_s\); developing cross-validation schemes with theoretical guarantees would make the aggregation rule fully self-contained. Finally, incorporating the KG's graph structure as an additional pretraining view would bridge our multi-view KPCA framework with graph neural network pretraining, and analogous end-to-end bounds could be derived for other relational tasks—drug-target interaction, cross-lingual entity alignment—wherever heterogeneous unlabeled side information is available and labeled annotations are scarce.

\appendix

\section{Proofs}
\label{app:proofs}

\subsection{Proof of Theorem~\ref{thm:main-pretrain}}

We first prove the view-level kernel approximation bounds for the two general
admissible regimes: inner-product kernels and Gaussian distance kernels.  The
linear kernel is then obtained as a special case of the inner-product regime
with \(f_s(t)=t\).  Throughout the proof, the kernel matrices are normalized by
\(1/N\), as in the main text.
For each view \(s\), recall that
\[
    \alpha_s
    :=
    \begin{cases}
    1, & s\in\mathcal S_{\rm ip} \,\text{or}\, \mathcal S_{\rm lin},\\
    \exp(-2\tau_s\sigma_s^2), & s\in\mathcal S_{\rm gauss},
    \end{cases}
\]
where \(\mathcal S_{\rm ip}\) denotes the set of nonlinear inner-product kernel views, \(\mathcal S_{\rm lin}\) denotes the set of linear kernel views, a special case of general inner-product views,  
and \(\mathcal S_{\rm gauss}\) denotes the set of Gaussian distance-kernel views. Define the ``harmless identity-shift'' coefficient
\[
    \chi_s
    :=
    \begin{cases}
        \nu_s\sigma_s^2/N,
        & s\in\mathcal S_{\rm lin},\\
        0,
        & s\in\mathcal S_{\rm ip},\\
        \frac{1-\alpha_s}{N},
        & s\in\mathcal S_{\rm gauss}.
    \end{cases}
\]

\begin{lemma}[Inner-product kernel decomposition]
\label{lem:ip-kernel-approx}
Suppose view \(s\in\mathcal S_{\rm ip}\), with
\(
    k_s(u,u')=f_s(u^\top u')
\),
under the inner-product kernel and
noise conditions in Theorem~\ref{thm:main-pretrain}(3). Let
\[
    \eta_s
    :=
    \sigma_{s,\infty}
    \left(\frac{B_{s,N}}{m_s}\right)^{1/2}
    +
    \sigma_{s,\infty}^2\sqrt{\frac{1+\nu_s}{m_s}}.
\]
We have the following decomposition 
\[
    K_s^{\rm obs}-K_s^{\rm sig}
    = \mathcal B_s + \Xi_s,
    \qquad
    \mathbb E\Xi_s=0,
\]
such that
\[
    \|\mathcal B_s\|_F
    \le b_s+d_s,
    \qquad
    \mathbb E\|\Xi_s\|_F^2
    \le v_s^2,
\]
where
\[
    b_s
    :=
    C_{s,b}
    \left(
        L_{s,2}\eta_s^2
        +
        L_{s,3}\eta_s^3
    \right),\qquad d_s:=C_{s,b}\frac{L_{s,1}\nu_s\sigma_{s,\infty}^2 }{\sqrt N},
\]
and
\[
    v_s^2
    :=
    C_{s,v}
    \left(
        L_{s,1}^2\eta_s^2
        +
        L_{s,2}^2\eta_s^4
        +
        L_{s,3}^2\eta_s^6
    \right).
\]
Here \(C_{s,b}\) and \(C_{s,v}\) depend only on
\((C_s,c_s,q_s)\) and a uniform upper bound on \(\nu_s\), but not on \(N\) or \(m_s\).
\end{lemma}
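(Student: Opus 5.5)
Write \(x_{jk}:=u_{s,j}^\top u_{s,k}\) and \(y_{jk}:=v_{s,j}^\top v_{s,k}\), and expand \(\Delta_{jk}:=x_{jk}-y_{jk}\):
\[
    \Delta_{jk}
    =
    \frac{\sigma_{s,k}}{\sqrt{m_s}}v_{s,j}^\top\varepsilon_{s,k}
    +\frac{\sigma_{s,j}}{\sqrt{m_s}}v_{s,k}^\top\varepsilon_{s,j}
    +\frac{\sigma_{s,j}\sigma_{s,k}}{m_s}\varepsilon_{s,j}^\top\varepsilon_{s,k}.
\]
The plan is an entrywise third-order Taylor expansion of \(f_s\) around \(y_{jk}\):
\[
    f_s(x_{jk})-f_s(y_{jk})
    =
    f_s'(y_{jk})\Delta_{jk}
    +\tfrac12 f_s''(y_{jk})\Delta_{jk}^2
    +\tfrac16 f_s'''(\zeta_{jk})\Delta_{jk}^3,
\]
with the matrix normalized by \(1/N\). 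First I would establish moment bounds on \(\Delta_{jk}\), treating off-diagonal and diagonal entries separately.

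For \(j\neq k\), each cross term \(v^\top\varepsilon/\sqrt{m_s}\) is a Lipschitz function of \(\varepsilon\) with constant \(\|v\|_2/\sqrt{m_s}\le (B_{s,N}/m_s)^{1/2}\). Concentration \eqref{eq:noiseconcentrate} then gives \(L^p\) norms of order \(\sigma_{s,\infty}(B_{s,N}/m_s)^{1/2}\), with constants depending only on \((C_s,c_s,q_s,p)\). The noise-noise term \(\varepsilon_j^\top\varepsilon_k/m_s\) has mean zero by independence. Conditioning on \(\varepsilon_k\), it is Lipschitz in \(\varepsilon_j\) with constant \(\|\varepsilon_k\|/m_s\), and \(\E\|\varepsilon_k\|^2=m_s\nu_s\). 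So its \(L^p\) norm is of order \(\sqrt{(1+\nu_s)/m_s}\).

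Hence \(\|\Delta_{jk}\|_{L^p}\lesssim\eta_s\) for \(p\le 6\), and \(\E\Delta_{jk}=0\) off the diagonal. On the diagonal, \(\Delta_{jj}\) contains \(\sigma_{s,j}^2\|\varepsilon_{s,j}\|^2/m_s\), whose mean is \(\sigma_{s,j}^2\nu_s\). Centered, it is still \(O(\eta_s)\) in \(L^p\), by concentration of \(\|\varepsilon\|\), which is 1-Lipschitz.

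Next I would assign the pieces of the expansion as follows.
\begin{itemize}
    \item \(\Xi_s\) collects, normalized by \(1/N\), the linear term \(f_s'(y_{jk})(\Delta_{jk}-\E\Delta_{jk})\) together with the centered versions of the quadratic and cubic terms. It is mean zero by construction.
    \item \(\mathcal B_s\) collects the means of the quadratic and cubic terms, plus the diagonal matrix \(\frac1N\operatorname{diag}(f_s'(y_{jj})\sigma_{s,j}^2\nu_s)\).
\end{itemize}

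For the bias, each entry of the mean quadratic and cubic matrix is bounded by \(C(L_{s,2}\eta_s^2+L_{s,3}\eta_s^3)/N\), using \(|f''|\le L_{s,2}\) and \(|f'''|\le L_{s,3}\). Summing over \(N^2\) entries gives Frobenius norm \(\le b_s\). The diagonal matrix has Frobenius norm at most \(\sqrt N\cdot L_{s,1}\nu_s\sigma_{s,\infty}^2/N=L_{s,1}\nu_s\sigma_{s,\infty}^2/\sqrt N\), which equals \(d_s\) up to \(C_{s,b}\).

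For the fluctuation, \(\E\|\Xi_s\|_F^2=N^{-2}\sum_{j,k}\E(\cdot)^2\). Each summand is bounded by \(C(L_{s,1}^2\eta_s^2+L_{s,2}^2\eta_s^4+L_{s,3}^2\eta_s^6)\), using \(\mathrm{Var}(X)\le\E X^2\) and the \(L^p\) moment bounds. This gives \(v_s^2\) directly.

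I expect the main obstacle to be the cubic remainder. Its intermediate point \(\zeta_{jk}\) is random, so only \(|f_s'''|\le L_{s,3}\) together with \(\E|\Delta|^6\lesssim\eta_s^6\) is available, and this is why sixth moments are needed. The other delicate point is the noise-noise product under the merely Lipschitz-concentration assumption \eqref{eq:noiseconcentrate}. There I plan to condition on one factor and integrate the tail bound, with independence across \(j\) making the product term conditionally mean zero. All constants then depend only on \((C_s,c_s,q_s)\) and the bound on \(\nu_s\), as claimed.
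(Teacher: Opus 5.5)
You have a genuine gap, and it sits on the diagonal. Your off-diagonal analysis is sound, and conditioning on $\varepsilon_{s,k}$ to control the noise--noise product is a valid, slightly simpler substitute for the paper's polarization argument.

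You expand every entry around $y_{jk}=v_{s,j}^\top v_{s,k}$. On the diagonal the increment is therefore $\Delta_{jj}=\mu_j+\widetilde\Delta_{jj}$. Here $\mu_j:=\nu_s\sigma_{s,j}^2$ is a deterministic part of order one, and only $\widetilde\Delta_{jj}$ is $O(\eta_s)$ in $L^p$.

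\textbf{Bias step.} You claim every entry of the mean quadratic and cubic matrices is at most $C(L_{s,2}\eta_s^2+L_{s,3}\eta_s^3)/N$. This fails for $j=k$: since $\mathbb E\Delta_{jj}^2\ge\mu_j^2$, those entries are of size $L_{s,2}\mu_j^2/N$ and $L_{s,3}\mu_j^3/N$. Over the $N$ diagonal entries they contribute Frobenius norm of order $(L_{s,2}\nu_s^2\sigma_{s,\infty}^4+L_{s,3}\nu_s^3\sigma_{s,\infty}^6)/\sqrt N$. This is controlled neither by $b_s$, since it does not vanish as $m_s\to\infty$, nor by $d_s$, which involves only $L_{s,1}\sigma_{s,\infty}^2$. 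For example, $f_s(t)=\epsilon\sin(t/\epsilon)$ has $L_{s,1}=1$ but $L_{s,2}=\epsilon^{-1}$.

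\textbf{Fluctuation step.} This fails for the same reason.
\begin{itemize}
\item On the diagonal, the centered quadratic term contains $f_s''(y_{jj})\mu_j\widetilde\Delta_{jj}$, whose variance is of order $L_{s,2}^2\mu_j^2\eta_s^2$.
\item Your remainder bound gives $\mathbb E|\mathcal R_{jj}|^2\lesssim L_{s,3}^2\mathbb E\Delta_{jj}^6$, which is of order $L_{s,3}^2\mu_j^6$.
\item This adds an $m_s$-independent term of order $L_{s,3}^2\nu_s^6\sigma_{s,\infty}^{12}/N$ to $\mathbb E\|\Xi_s\|_F^2$, and $v_s^2$ contains no such term.
\end{itemize}

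The true diagonal entries are in fact small, because the large Taylor pieces cancel one another. Bounding the pieces separately cannot detect this cancellation. Your correction $N^{-1}\operatorname{diag}(f_s'(y_{jj})\mu_j)$ captures only the first-order part of the deterministic shift.

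\textbf{The repair} is exactly the paper's route: re-center the expansion point on the diagonal.
\begin{itemize}
\item Expand around $c_{jk}:=v_{s,j}^\top v_{s,k}+\mathbf 1_{\{j=k\}}\nu_s\sigma_{s,j}^2$ (the paper's $x_{jk}$). Then the increment $u_{s,j}^\top u_{s,k}-c_{jk}$ is mean-zero with $L^p$ norm $\lesssim\eta_s$ for every pair $(j,k)$, including the diagonal.
\item Your Taylor and moment arguments then apply uniformly to $K_s^{\rm obs}-K_s^\sharp$, where $K_s^\sharp(j,k):=N^{-1}f_s(c_{jk})$. This yields exactly $b_s$ and $v_s^2$.
\item What remains is the deterministic diagonal matrix $K_s^\sharp-K_s^{\rm sig}$, with entries $N^{-1}[f_s(\|v_{s,j}\|_2^2+\nu_s\sigma_{s,j}^2)-f_s(\|v_{s,j}\|_2^2)]$.
\item By the mean value theorem, each such entry is at most $L_{s,1}\nu_s\sigma_{s,j}^2/N$ in absolute value. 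This gives the $d_s$ term with no contribution from $L_{s,2}$ or $L_{s,3}$.
\end{itemize}
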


\begin{proof}
Suppress the view index \(s\), and write
\[
    m=m_s,\qquad
    \nu=\nu_s,\qquad
    \sigma_j=\sigma_{s,j},\qquad
    \sigma_\infty=\sigma_{s,\infty},
\]
\[
    B_N=B_{s,N},
    \qquad
    f=f_s,
    \qquad
    L_r=L_{s,r}.
\]
The signal vectors are fixed throughout the proof, and all expectations and
probabilities are taken only over the noise.

For \(j,k\in[N]\), define
\[
    x_{jk}
    :=
    v_j^\top v_k
    +
    \mathbf 1_{\{j=k\}}\nu\sigma_j^2
\]
and
\[
    \Delta_{jk}
    :=
    u_j^\top u_k-x_{jk}.
\]
We have
\[
    u_j
    =
    v_j+\sigma_j\frac{\varepsilon_j}{\sqrt m}.
\] Expanding the scalar product gives
\[
    \Delta_{jk}
    =
    \frac{\sigma_j}{\sqrt m}
    \varepsilon_j^\top v_k+
    \frac{\sigma_k}{\sqrt m}
    \varepsilon_k^\top v_j+
    \frac{\sigma_j\sigma_k}{m}
    \varepsilon_j^\top\varepsilon_k
    -
    \mathbf 1_{\{j=k\}}\nu\sigma_j^2.
\]
Since the noise vectors are centered and independent across entities,
\(
    \mathbb E\Delta_{jk}=0
\).

We first establish moment bounds for \(\Delta_{jk}\).  The noise
assumption implies the following standard fact: if \(G\) is \(L\)-Lipschitz,
then for every fixed \(p\ge1\),
\[
    \|G(\varepsilon_j)-\mathbb EG(\varepsilon_j)\|_{L^p}
    \le
    C_{s,p}L.
\]
To see this, if \(G:\mathbb R^{m_s}\to\mathbb R\) is \(L\)-Lipschitz and \( Z:=G(\varepsilon_{s,j})-\mathbb E G(\varepsilon_{s,j})\), then \[ \mathbb P(|Z|>t) \le C_s \exp\left\{ -c_s\left(\frac{t}{L}\right)^{q_s} \right\}. \] Using the tail-integral identity, \[ \begin{aligned} \mathbb E|Z|^p &= p\int_0^\infty t^{p-1}\mathbb P(|Z|>t)\,dt\\ &\le pC_s \int_0^\infty t^{p-1} \exp\left\{ -c_s\left(\frac{t}{L}\right)^{q_s} \right\}dt\\ &= \frac{pC_s}{q_s} c_s^{-p/q_s} \Gamma\left(\frac{p}{q_s}\right)L^p. \end{aligned} \] Consequently, \[ \|G(\varepsilon_{s,j})-\mathbb E G(\varepsilon_{s,j})\|_{L^p} \le C_{s,p}L, \] where \(C_{s,p}\) depends only on \((p,C_s,c_s,q_s)\), and not on \(N\) or \(m_s\).

The same concentration property tensorizes to an independent pair
\((\varepsilon_j,\varepsilon_k)\), up to constants.  To see this, let
\(G:\mathbb R^m\times\mathbb R^m\to\mathbb R\) be \(1\)-Lipschitz under the
Euclidean product metric, and define
\[
    g(x):=\mathbb E_{\varepsilon_k}G(x,\varepsilon_k).
\]
Then \(g\) is \(1\)-Lipschitz, and conditioning on \(\varepsilon_j\) gives
\[
\begin{aligned}
    \mathbb P
    \{|G-\mathbb EG|>r\}
    &\le
    \mathbb P
    \{|G-g(\varepsilon_j)|>r/2\}\\
    &\quad+
    \mathbb P
    \{|g(\varepsilon_j)-\mathbb Eg(\varepsilon_j)|>r/2\}\\
    &\le
    2C_s
    \exp(-c_s2^{-q_s}r^{q_s}).
\end{aligned}
\]

Now, consider first the signal--noise term
\(
    \frac{\sigma_j}{\sqrt m}
    \varepsilon_j^\top v_k
\).
The map
\(
    \varepsilon_j
    \longmapsto
    \frac{\sigma_j}{\sqrt m}
    \varepsilon_j^\top v_k
\)
is Lipschitz with constant
\(
    \frac{|\sigma_j|\|v_k\|_2}{\sqrt m}
    \le
    \sigma_\infty
    \left(\frac{B_N}{m}\right)^{1/2}
\).
It is centered, so for every fixed \(p\ge1\),
\[
    \left\|\frac{\sigma_j}{\sqrt m}\varepsilon_j^\top v_k\right\|_{L^p}
    \le
    C_{s,p}
    \sigma_\infty
    \left(\frac{B_N}{m}\right)^{1/2}.
\]
The same bound holds for the other signal--noise term.

We next control the noise--noise term.  For \(j=k\), let
\[
    R_j:=|\sigma_j|\|\varepsilon_j\|_2.
\]
The variable \(R_j\) is \(|\sigma_j|\)-Lipschitz, and therefore
\[
    \|R_j-\mathbb ER_j\|_{L^p}
    \le
    C_{s,p}|\sigma_j|.
\]
Moreover,
\[
    \mathbb ER_j^2
    =
    m\nu\sigma_j^2,
    \qquad
    \mathbb ER_j
    \le
    |\sigma_j|\sqrt{m\nu}.
\]
Using
\[
    R_j^2-\mathbb ER_j^2
    =
    (R_j-\mathbb ER_j)(R_j+\mathbb ER_j)
    -
    \operatorname{Var}(R_j),
\]
Hölder's inequality yields, for every fixed \(p\ge1\),
\begin{align*}
    &\|R_j^2-\mathbb ER_j^2\|_{L^p}\\&\leq\|(R_j-\mathbb ER_j)(R_j+\mathbb ER_j)\|_{L^p}+\operatorname{Var}(R_j)\\&
    \le \|(R_j-\mathbb ER_j)\|_{L^{2p}}\|(R_j+\mathbb ER_j)\|_{L^{2p}}+\operatorname{Var}(R_j)\\
    &\le C_{s,p}|\sigma_j|(C_{s,p}|\sigma_j|+2|\sigma_j|\sqrt{m\nu})\\&\le
    C_{s,p}\sigma_\infty^2(1+\sqrt{m\nu}).
\end{align*}
Consequently,
\[
    \left\|
        \sigma_j^2
        \left(
            \frac{\|\varepsilon_j\|_2^2}{m}-\nu
        \right)
    \right\|_{L^p}
    \le
    C_{s,p}\sigma_\infty^2\left(
    \frac{1}{m}+\sqrt{\frac{\nu}{m}}\right).
\]

For \(j\ne k\), define
\[
    R_{jk}^{\pm}
    :=
    \|\sigma_j\varepsilon_j
    \pm
    \sigma_k\varepsilon_k\|_2.
\]
As functions of the independent pair
\((\varepsilon_j,\varepsilon_k)\), these are Lipschitz with constants at most
\[
    (\sigma_j^2+\sigma_k^2)^{1/2}
    \le
    \sqrt2\,\sigma_\infty.
\]
Furthermore,
\[
    \mathbb E(R_{jk}^{+})^2
    =
    \mathbb E(R_{jk}^{-})^2
    =
    m\nu(\sigma_j^2+\sigma_k^2).
\]
The preceding squared-norm argument therefore gives
\[
    \left\|
        (R_{jk}^{\pm})^2
        -
        \mathbb E(R_{jk}^{\pm})^2
    \right\|_{L^p}
    \le
    C_{s,p}\sigma_\infty^2(1+\sqrt {m\nu}).
\]
Using the polarization identity
\[
    4\sigma_j\sigma_k
    \varepsilon_j^\top\varepsilon_k
    =
    (R_{jk}^{+})^2-(R_{jk}^{-})^2
\]
and the equality of the two expectations, we obtain
\[
    \left\|
        \frac{\sigma_j\sigma_k}{m}
        \varepsilon_j^\top\varepsilon_k
    \right\|_{L^p}
    \le
    C_{s,p}
    {\sigma_\infty^2}(\frac{1}{m}+\sqrt{\frac{\nu}{m}})\le  C_{s,p}
    {\sigma_\infty^2}\sqrt{\frac{\nu+1}{m}}.
\]

Combining the two signal--noise terms and the noise--noise term gives, for
\(p=2,4,6\),
\[
    \|\Delta_{jk}\|_{L^p}
    \le
    C_{s,p}\omega,
\]
uniformly over \(j,k\), where
\[
    \omega
    :=
    \sigma_\infty
    \left(\frac{B_N}{m}\right)^{1/2}
    +\sigma_\infty^2\sqrt{\frac{\nu+1}{m}}.
\]
Equivalently,
\[
    \mathbb E|\Delta_{jk}|^p
    \le
    C_{s,p}\omega^p,
    \qquad p=2,4,6.
\]

Taylor's theorem with integral remainder~\cite{firey1960remainder} gives
\[
\begin{aligned}
    f(x_{jk}+\Delta_{jk})-f(x_{jk})
    &=
    f'(x_{jk})\Delta_{jk}
    +
    \frac12 f''(x_{jk})\Delta_{jk}^2
    +
    \mathcal R_{jk},
\end{aligned}
\]
where
\[
    \mathcal R_{jk}
    =
    \frac{\Delta_{jk}^3}{2}
    \int_0^1
    (1-r)^2
    f^{(3)}(x_{jk}+r\Delta_{jk})\,dr.
\]
Since \(f^{(3)}\) is globally bounded,
\[
    |\mathcal R_{jk}|
    \le
    \frac{L_3}{6}|\Delta_{jk}|^3,
\]
and hence
\[
    \mathbb E|\mathcal R_{jk}|^2
    \le
    C_sL_3^2\omega^6.
\]

Define
\[
    A^{(1)}(j,k)
    :=
    \frac1N f'(x_{jk})\Delta_{jk},\qquad
    A^{(2)}(j,k)
    :=
    \frac1{2N}f''(x_{jk})\Delta_{jk}^2,
\]
\[
    A^{(R)}(j,k)
    :=
    \frac1N \mathcal R_{jk}.
\]
Define the effective inner-product kernel matrix
\[
    K_s^\sharp(j,k)
    :=
    \frac1N
    f_s\!\left(
        v_{s,j}^{\top}v_{s,k}
        +
        \mathbf 1_{\{j=k\}}
        \nu_s\sigma_{s,j}^2
    \right).
\]
Then
\[
    K^{\rm obs}-K^\sharp
    =
    A^{(1)}+A^{(2)}+A^{(R)}.
\]

Define
\[
    \mathcal B'
    :=
    \mathbb E[K^{\rm obs}-K^\sharp],
    \qquad
    \Xi
    :=
    K^{\rm obs}-K^\sharp-\mathcal B'.
\]

By construction, \(\mathbb E\Xi=0\).
Since \(\mathbb E\Delta_{jk}=0\),
\[
    \mathbb EA^{(1)}=0,
\]
and hence
\[
    \mathcal B'
    =
    \mathbb EA^{(2)}
    +
    \mathbb EA^{(R)}.
\]

For the second-order bias,
\[
\begin{aligned}
    \|\mathbb EA^{(2)}\|_F^2
    &=
    \sum_{j,k}
    \left|
        \frac{f''(x_{jk})}{2N}
        \mathbb E\Delta_{jk}^2
    \right|^2\\
    &\le
    \frac{L_2^2}{4N^2}
    \sum_{j,k}
    \left(
        \mathbb E\Delta_{jk}^2
    \right)^2\\
    &\le
    C_sL_2^2\omega^4.
\end{aligned}
\]
Thus
\[
    \|\mathbb EA^{(2)}\|_F
    \le
    C_sL_2\omega^2.
\]

For the Taylor-remainder bias,
\[
\begin{aligned}
    \|\mathbb EA^{(R)}\|_F^2
    \le
    \frac1{N^2}
    \sum_{j,k}
    \mathbb E|\mathcal R_{jk}|^2\le
    C_sL_3^2\omega^6.
\end{aligned}
\]
Combining the estimates gives
\[
    \|\mathcal B'\|_F
    \le
    C_{s,b}
    \left(
        L_2\omega^2
        +
        L_3\omega^3
    \right).
\]

For the centered fluctuation,
\[
\begin{aligned}
    \Xi
    =
    A^{(1)}
    +
    \left(A^{(2)}-\mathbb EA^{(2)}\right)+
    \left(A^{(R)}-\mathbb EA^{(R)}\right).
\end{aligned}
\]
The first-order term satisfies
\[
\begin{aligned}
    \mathbb E\|A^{(1)}\|_F^2
    =
    \frac1{N^2}
    \sum_{j,k}
    f'(x_{jk})^2
    \mathbb E\Delta_{jk}^2
    \le
    C_sL_1^2\omega^2.
\end{aligned}
\]
For the centered second-order term,
\[
\begin{aligned}
    \mathbb E
    \left\|
        A^{(2)}-\mathbb EA^{(2)}
    \right\|_F^2
    &\le
    \mathbb E\|A^{(2)}\|_F^2\\
    &\le
    \frac{L_2^2}{4N^2}
    \sum_{j,k}
    \mathbb E|\Delta_{jk}|^4\\
    &\le
    C_sL_2^2\omega^4.
\end{aligned}
\]
Likewise,
\[
\begin{aligned}
    \mathbb E
    \left\|
        A^{(R)}-\mathbb EA^{(R)}
    \right\|_F^2
    \le
    \mathbb E\|A^{(R)}\|_F^2
    \le
    C_sL_3^2\omega^6.
\end{aligned}
\]
Using
\[
    \|A+B+C\|_F^2
    \le
    3
    \left(
        \|A\|_F^2+\|B\|_F^2+\|C\|_F^2
    \right),
\]
we conclude that
\[
    \mathbb E\|\Xi\|_F^2
    \le
    C_{s,v}
    \left(
        L_1^2\omega^2
        +
        L_2^2\omega^4
        +
        L_3^2\omega^6
    \right).
\]

It remains to compare \(K^\sharp\) and \(K^{\rm sig}\). These matrices agree
off the diagonal. Define \(D_s^{\rm diag}\) as the deterministic diagonal matrix \[ D_s^{\rm diag}(j,k) := \mathbf 1_{\{j=k\}} \frac1N \left[ f_s\!\left( \|v_{s,j}\|_2^2+\nu_s\sigma_{s,j}^2 \right) - f_s(\|v_{s,j}\|_2^2) \right]. \] 
By the mean-value theorem,
\[
    |D^{\rm diag}(j,j)| =
    \frac1N
    \left(
        f(\|v_j\|_2^2+\nu\sigma_j^2)
        -
        f(\|v_j\|_2^2)\right)
    \le
    \frac{L_1\nu\sigma_j^2}{N}.
\]
Therefore
\[
\begin{aligned}
    \|D^{\rm diag}\|_F
    &\le
    \frac{
        L_1\nu
        \left(\sum_{j=1}^N\sigma_j^4\right)^{1/2}
    }{N}\\
    &\le
    \frac{L_1\nu\sigma_\infty^2}{\sqrt N}.
\end{aligned}
\]

Set
\[
    \mathcal B_s:=\mathcal B'+D_s^{\rm diag},
    \qquad
    \Xi_s:=\Xi.
\]
Then
\[
    K_s^{\rm obs}-K_s^{\rm sig}
    =\mathcal B_s+\Xi_s,
    \qquad
    \mathbb E\Xi_s=0,
\]
and \(\|\mathcal B_s\|_F\le b_s+d_s\).

Restoring the view index proves the stated bounds for \(b_s, d_s\), and \(v_s^2\).

For the single-view probability bound,
\[
    \|K_s^{\rm obs}-K_s^{\rm sig}\|_F
    \le b_s+d_s+\|\Xi_s\|_F.
\]
This proves the lemma.
\end{proof}

\begin{lemma}[Linear-kernel specialization]
\label{lem:linear-kernel-approx}
Suppose view \(s\in\mathcal S_{\rm lin}\), with
\[
    k_s(u,u')=u^\top u',
\]
and suppose that the inner-product noise conditions in
Theorem~\ref{thm:main-pretrain} hold.  In addition, assume that the noise
scale is homogeneous within the view:
\[
    \sigma_{s,j}\equiv\sigma_s,
    \qquad j\in[N].
\]
Define
\[
    \chi_s^{\rm lin}
    :=
    \frac{\nu_s\sigma_s^2}{N},
\]
and let
\[
    E_s^{\rm lin}
    :=
    K_s^{\rm obs}
    -
    \left(
        K_s^{\rm sig}
        +
        \chi_s^{\rm lin}I_N
    \right).
\]
Then
\[
    E_s^{\rm lin}
    =
    \Xi_s,
    \qquad
    \mathbb E\Xi_s=0.
\]
In particular, the deterministic finite-\(m_s\) bias vanishes:
\[
    \mathcal B_s=0,
    \qquad
    b_s=0.
\]

Moreover,
\[
    \mathbb E
    \left\|
        \Xi_s
    \right\|_F^2
    \le
    \left(v_s\right)^2,
\]
where 
\[
    v_s^2
    :=
    C_{s,v}
    \left[
        \frac{\sigma_s^2B_{s,N}}{m_s}
        +
        \frac{
            \sigma_s^4(1+\nu_s)
        }{m_s}
    \right].
\]
Here \(C_{s,v}\) depends only on the constants in the noise-concentration
assumption but not on \(N\) or \(m_s\).
\end{lemma}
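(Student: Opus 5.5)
The plan is to compute the linear-kernel perturbation exactly rather than going through the Taylor expansion of Lemma~\ref{lem:ip-kernel-approx}. With \(f_s(t)=t\) we have \(L_{s,2}=L_{s,3}=0\), so the second-order and remainder terms vanish identically. With homogeneous noise \(\sigma_{s,j}\equiv\sigma_s\), the diagonal shift becomes a scalar multiple of the identity.

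\textbf{Entrywise expansion.} Suppress the view index. Entrywise,
\[
    N K^{\rm obs}(j,k)
    =
    u_j^\top u_k
    =
    v_j^\top v_k
    +
    \frac{\sigma}{\sqrt m}\left(\varepsilon_j^\top v_k+\varepsilon_k^\top v_j\right)
    +
    \frac{\sigma^2}{m}\varepsilon_j^\top\varepsilon_k .
\]
Subtracting \(N K^{\rm sig}(j,k)=v_j^\top v_k\) and \(N\chi^{\rm lin}\mathbf 1_{\{j=k\}}=\nu\sigma^2\mathbf 1_{\{j=k\}}\) shows that
\[
    N E^{\rm lin}(j,k)=\Delta_{jk},
\]
with \(\Delta_{jk}\) exactly as defined in the proof of Lemma~\ref{lem:ip-kernel-approx}, specialized to \(\sigma_j=\sigma\).

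\textbf{Zero mean.} Centering and independence of the noise give \(\mathbb E\varepsilon_j^\top v_k=0\) and, for \(j\ne k\), \(\mathbb E\varepsilon_j^\top\varepsilon_k=0\). On the diagonal, \(\mathbb E\|\varepsilon_j\|_2^2/m=\nu\). Hence \(\mathbb E\Delta_{jk}=0\) for all \(j,k\), so \(E^{\rm lin}=\Xi\) with \(\mathbb E\Xi=0\). We may therefore take \(\mathcal B=0\) and \(b=0\). No diagonal correction \(d_s\) appears: the shift \(\chi^{\rm lin}I_N\) is absorbed into the target, and an identity shift does not change eigenvectors. This is why Theorem~\ref{thm:main-pretrain} later uses \(K^{\rm sig}+\chi I\) as the reference matrix.

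\textbf{Variance bound.} We have
\[
    \mathbb E\|\Xi\|_F^2=N^{-2}\sum_{j,k}\mathbb E\Delta_{jk}^2\le \max_{j,k}\mathbb E\Delta_{jk}^2 .
\]
The bound on \(\mathbb E\Delta_{jk}^2\) reuses the \(L^2\) moment estimates already established in the proof of Lemma~\ref{lem:ip-kernel-approx}, with \(\sigma_\infty=\sigma\):
\begin{itemize}
    \item The signal--noise terms are centered Lipschitz functionals. By \(\|v_k\|_2^2\le B_N\), each has second moment at most \(C\sigma^2B_N/m\).
    \item The off-diagonal noise--noise term, handled via polarization, has second moment at most \(C\sigma^4(1+\nu)/m\).
    \item The diagonal term \(\sigma^2(\|\varepsilon_j\|^2/m-\nu)\) has second moment at most \(C\sigma^4(1/m+\nu/m)\le C\sigma^4(1+\nu)/m\), using \((1/m+\sqrt{\nu/m})^2\le 2(1+\nu)/m\) for \(m\ge1\).
\end{itemize}
Applying \((a+b+c)^2\le 3(a^2+b^2+c^2)\) then gives \(\mathbb E\Delta_{jk}^2\le v^2\) with the stated \(v^2\).

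The only step requiring care, and the expected main obstacle, is to check that squaring \(\omega\) produces no cross term exceeding the two displayed terms. Since \(\omega^2\le 2\bigl(\sigma^2B_N/m+\sigma^4(1+\nu)/m\bigr)\), this is immediate. The constants depend only on \((C_s,c_s,q_s)\), as in Lemma~\ref{lem:ip-kernel-approx}.
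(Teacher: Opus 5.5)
Your proposal is correct and follows the paper's proof essentially step for step. You use the same entrywise expansion identifying $N E^{\rm lin}(j,k)$ with the centered $\Delta_{jk}$, the same zero-mean argument giving $\Xi_s=E_s^{\rm lin}$ and $\mathcal B_s=0$, and the same reuse of the $L^2$ moment bounds from Lemma~\ref{lem:ip-kernel-approx} to obtain $\mathbb E\|\Xi_s\|_F^2=N^{-2}\sum_{j,k}\mathbb E\Delta_{jk}^2\le v_s^2$.
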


\begin{proof}
Suppress the view index \(s\), and write
\[
    m=m_s,
    \qquad
    \sigma=\sigma_s,
    \qquad
    \nu=\nu_s,
    \qquad
    B_N=B_{s,N}.
\]
The observed entity vector is
\[
    u_j
    =
    v_j
    +
    \frac{\sigma\varepsilon_j}{\sqrt m}.
\]
For the linear kernel,
\[
    K^{\rm obs}(j,k)
    =
    \frac1N u_j^\top u_k,
    \qquad
    K^{\rm sig}(j,k)
    =
    \frac1N v_j^\top v_k.
\]
Expanding the scalar product gives
\[
\begin{aligned}
    u_j^\top u_k
    &=
    v_j^\top v_k
    +
    \frac{\sigma}{\sqrt m}
    \varepsilon_j^\top v_k
    +
    \frac{\sigma}{\sqrt m}
    \varepsilon_k^\top v_j
    +
    \frac{\sigma^2}{m}
    \varepsilon_j^\top\varepsilon_k .
\end{aligned}
\]
Define
\[
\begin{aligned}
    \Delta_{jk}
    &:=
    u_j^\top u_k
    -
    v_j^\top v_k
    -
    \mathbf 1_{\{j=k\}}\nu\sigma^2\\
    &=
    \frac{\sigma}{\sqrt m}
    \varepsilon_j^\top v_k
    +
    \frac{\sigma}{\sqrt m}
    \varepsilon_k^\top v_j\\
    &\quad+
    \frac{\sigma^2}{m}
    \left(
        \varepsilon_j^\top\varepsilon_k
        -
        \mathbf 1_{\{j=k\}}m\nu
    \right).
\end{aligned}
\]
Since the noise vectors are centered and independent across entities,
\[
    \mathbb E\Delta_{jk}=0.
\]
Indeed, the two signal--noise terms have zero expectation.  If \(j\ne k\),
then independence and centering imply
\[
    \mathbb E(\varepsilon_j^\top\varepsilon_k)=0,
\]
whereas if \(j=k\), the definition
\[
    \nu
    =
    \frac{\mathbb E\|\varepsilon_j\|_2^2}{m}
\]
gives
\[
    \mathbb E
    \left[
        \frac{\|\varepsilon_j\|_2^2}{m}-\nu
    \right]
    =
    0.
\]

The moment bounds established in the proof of
Lemma~\ref{lem:ip-kernel-approx} give
\[
    \left\|
        \Delta_{jk}
    \right\|_{L^2}
    \le
    C_s
    \left[
        \sigma
        \left(\frac{B_N}{m}\right)^{1/2}
        +
        \sigma^2
        \sqrt{\frac{1+\nu}{m}}
    \right]
\]
uniformly over \(j,k\in[N]\).  Therefore,
\[
\begin{aligned}
    \mathbb E
    \left|
        \Delta_{jk}
    \right|^2
    &\le
    C_s
    \left[
        \frac{\sigma^2B_N}{m}
        +
        \frac{\sigma^4(1+\nu)}{m}
    \right].
\end{aligned}
\]

Now observe that
\[
\begin{aligned}
    E(j,k)
    &=
    K^{\rm obs}(j,k)
    -
    K^{\rm sig}(j,k)
    -
    \frac{\nu\sigma^2}{N}
    \mathbf 1_{\{j=k\}}\\
    &=
    \frac1N
    \Delta_{jk}.
\end{aligned}
\]
Thus
\[
    \mathbb E E=0,
\]
so we may set
\[
    \Xi:=E,
    \qquad
    \mathcal B:=0.
\]
This proves
\[
    b_s=0.
\]

Furthermore,
\[
\begin{aligned}
    \mathbb E
    \left\|
        \Xi
    \right\|_F^2
    &=
    \frac1{N^2}
    \sum_{j,k=1}^N
    \mathbb E
    \left|
        \Delta_{jk}
    \right|^2\\
    &\le
    C_s
    \left[
        \frac{\sigma^2B_N}{m}
        +
        \frac{\sigma^4(1+\nu)}{m}
    \right].
\end{aligned}
\]
Restoring the view index gives
\[
    \mathbb E
    \left\|
        \Xi_s
    \right\|_F^2
    \le
    \left(v_s\right)^2.
\]

\end{proof}
\begin{lemma}[Gaussian distance-kernel bias--variance decomposition]
\label{lem:rbf-kernel-approx}
Suppose view \(s\in\mathcal S_{\rm gauss}\).  Under the Gaussian
distance-kernel and Gaussian noise conditions in
Theorem~\ref{thm:main-pretrain}, define
\[
    E_s^{\rm gauss}
    :=
    K_s^{\rm obs}
    -
    \left(
        \alpha_sK_s^{\rm sig}+\chi_sI_N
    \right),
    \qquad
    \alpha_s:=\exp(-2\tau_s\sigma_s^2),
    \qquad
    \chi_s:=\frac{1-\alpha_s}{N}.
\]
Let \[ \eta_s^2 := \frac{ \sigma_s^4 \operatorname{tr}(\Psi_{s,m_s}^2) }{m_s^2} + \frac{ \sigma_s^2 \|\Psi_{s,m_s}\|_{\op} B_{s,N} }{m_s}. \]
There is a decomposition
\[
    E_s^{\rm gauss}
    =
    \mathcal B_s+\Xi_s,
    \qquad
    \mathbb E[\Xi_s]=0,
\]
such that
\[
   \|\mathcal B_s\|_F \le b_s, \qquad \mathbb E\|\Xi_s\|_F^2 \le v_s^2,
\]
where
\[ b_s := C_{s,b} \left( \tau_s^2\eta_s^2 + \tau_s^3\eta_s^3 \right), \] and \[ v_s^2 := C_{s,v} \left( \tau_s^2\eta_s^2 + \tau_s^4\eta_s^4 + \tau_s^6\eta_s^6 \right). \]
Here \(C_{s,b}\) and \(C_{s,v}\) are constants independent of \(N\) and \(m_s\).
\end{lemma}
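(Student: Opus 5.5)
We follow the template of Lemma~\ref{lem:ip-kernel-approx}, with the squared distance playing the role of the inner product. Suppress the view index and write \(\sigma=\sigma_s\), \(\tau=\tau_s\), \(\Psi=\Psi_{s,m_s}\), \(m=m_s\). For \(j\ne k\) the noisy squared distance is
\[
    \|u_j-u_k\|_2^2=\|v_j-v_k\|_2^2+\frac{2\sigma}{\sqrt m}(v_j-v_k)^\top(\varepsilon_j-\varepsilon_k)+\frac{\sigma^2}{m}\|\varepsilon_j-\varepsilon_k\|_2^2 .
\]
Since \(\varepsilon_j-\varepsilon_k\sim N(0,2\Psi)\), its expectation is \(\|v_j-v_k\|_2^2+2\sigma^2\operatorname{tr}(\Psi)/m=\|v_j-v_k\|_2^2+2\sigma^2\), using the normalization \(\operatorname{tr}(\Psi)/m=1\). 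We therefore set \(x_{jk}:=\|v_j-v_k\|_2^2+2\sigma^2\) and let \(\Delta_{jk}:=\|u_j-u_k\|_2^2-x_{jk}\), which is centered. Then \(e^{-\tau x_{jk}}=\alpha\,e^{-\tau\|v_j-v_k\|^2}\), so the off-diagonal entries of \(\frac1N e^{-\tau x_{jk}}\) coincide exactly with \(\alpha K^{\rm sig}(j,k)\). On the diagonal, \(K^{\rm obs}(j,j)=1/N=\alpha K^{\rm sig}(j,j)+(1-\alpha)/N\) deterministically. Hence the diagonal of \(E^{\rm gauss}\) vanishes identically, which explains both the choice \(\chi_s=(1-\alpha_s)/N\) and why \(d_s^{\rm gauss}=0\).

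Next we bound the moments of \(\Delta_{jk}\). The cross term is Gaussian with variance \(\frac{4\sigma^2}{m}(v_j-v_k)^\top 2\Psi(v_j-v_k)\le \frac{8\sigma^2}{m}\|\Psi\|_{\op}\|v_j-v_k\|^2\). The quadratic term \(\frac{\sigma^2}{m}(\|\varepsilon_j-\varepsilon_k\|^2-2\operatorname{tr}\Psi)\) is a centered Gaussian chaos with variance \(8\sigma^4\operatorname{tr}(\Psi^2)/m^2\). Hypercontractivity then gives \(\|\Delta_{jk}\|_{L^p}\le C_p(\eta_{jk})\) for \(p\le 6\), where the signal part of \(\eta_{jk}\) involves \(\|v_j-v_k\|^2\le 2\|v_j-a\|^2+2\|v_k-a\|^2\) rather than a uniform bound. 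This is the point where the averaged assumption \(\frac1N\sum_j\|v_j-a\|^{2q}\le B_N^q\) for \(q=1,2,3\) enters. Each Frobenius sum \(\frac1{N^2}\sum_{j,k}\mathbb E|\Delta_{jk}|^{p}\) with \(p=2,4,6\) is bounded by \(C\eta^{p}\), because the \(p\)-th power of the signal part is controlled by \(\frac1{N^2}\sum_{j,k}\|v_j-v_k\|^{p}\lesssim B_N^{p/2}\).

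We then Taylor expand \(g(x)=e^{-\tau x}\) around \(x_{jk}\) to third order, exactly as in Lemma~\ref{lem:ip-kernel-approx}. Since all arguments are nonnegative, the derivatives satisfy \(|g^{(r)}|\le\tau^r\) on \([0,\infty)\), and the remainder is evaluated at \(x_{jk}+r\Delta_{jk}\ge0\) by convexity, as both endpoints are squared distances. This gives \(|g^{(r)}|\le \tau^r\) in place of \(L_r\). Setting \(\mathcal B:=\mathbb E[\frac1N(g''\Delta^2/2+\mathcal R)]\) yields \(\|\mathcal B\|_F\le C(\tau^2\eta^2+\tau^3\eta^3)\) via Jensen. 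Setting \(\Xi\) to be the centered remainder and applying the same three-term split yields \(\mathbb E\|\Xi\|_F^2\le C(\tau^2\eta^2+\tau^4\eta^4+\tau^6\eta^6)\).

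The main obstacle is the moment step. We need uniform-in-\(m\) control of the Gaussian chaos and must replace the pointwise signal bound by the averaged moment assumption. We would handle the latter by bounding the entrywise moments by \(C\bigl[(\sigma^2\|\Psi\|_{\op}\|v_j-v_k\|^2/m)^{p/2}+(\sigma^4\operatorname{tr}\Psi^2/m^2)^{p/2}\bigr]\) before summing.
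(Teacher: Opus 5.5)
Your proposal is correct and follows essentially the same route as the paper. You center the squared distance at $x_{jk}=\|v_j-v_k\|_2^2+2\sigma^2$, so the off-diagonal target is exactly $\alpha_s K_s^{\rm sig}$ and the diagonal cancels identically. You bound the Gaussian linear term directly and the second-chaos quadratic term by hypercontractivity, then sum the entrywise moments against the averaged signal-moment assumption. Finally, you run the third-order Taylor expansion of $e^{-\tau t}$ on $[0,\infty)$ with $|f^{(r)}|\le\tau^r$, which yields the Jensen-controlled bias and the three-term centered fluctuation. The only difference is that you write out the step $\|v_j-v_k\|_2^{2q}\le 2^{2q-1}(\|v_j-a_s\|_2^{2q}+\|v_k-a_s\|_2^{2q})$, which the paper uses implicitly to obtain $\frac{1}{N^2}\sum_{j,k}\|v_j-v_k\|_2^{2q}\le C_qB_{s,N}^q$.
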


\begin{proof}
Suppress the view index \(s\), and write
\[
    m=m_s,\qquad
    \sigma=\sigma_s,\qquad
    \tau=\tau_s,\qquad
    \Psi=\Psi_{s,m_s},\qquad
    \alpha=e^{-2\tau\sigma^2},\qquad
    \chi=\frac{1-\alpha}{N}.
\]
The signal vectors \(\{v_j\}_{j=1}^N\) are treated as fixed, and all
expectations and probabilities below are taken only with respect to the
Gaussian noise.

Set
\[
    \zeta_j
    :=
    \frac{\sigma\varepsilon_j}{\sqrt m},
    \qquad
    u_j=v_j+\zeta_j,
    \qquad
    \varepsilon_j
    \stackrel{\rm iid}{\sim}N(0,\Psi).
\]
For \(j\ne k\), define
\[
    h_{jk}:=v_j-v_k,
    \qquad
    d_{jk}:=\|h_{jk}\|_2^2,
    \qquad
    a_{jk}:=\zeta_j-\zeta_k.
\]
Then
\[
    a_{jk}
    \sim
    N(0,\Gamma),
    \qquad
    \Gamma:=\frac{2\sigma^2}{m}\Psi.
\]
Since \(\operatorname{tr}(\Psi)/m=1\),
\[
    \operatorname{tr}(\Gamma)
    =
    \frac{2\sigma^2}{m}\operatorname{tr}(\Psi)
    =
    2\sigma^2.
\]
Consequently,
\[
\begin{aligned}
    \|u_j-u_k\|_2^2
    &=
    \|h_{jk}+a_{jk}\|_2^2\\
    &=
    d_{jk}+2\sigma^2+e_{jk},
\end{aligned}
\]
where
\[
    e_{jk}
    :=
    2h_{jk}^{\top}a_{jk}
    +
    \left(
        \|a_{jk}\|_2^2-\operatorname{tr}(\Gamma)
    \right).
\]
Because \(a_{jk}\) is centered Gaussian,
\[
    \mathbb E[e_{jk}]=0.
\]
Write
\[
    e_{jk}=L_{jk}+Q_{jk},
\]
where
\[
    L_{jk}:=2h_{jk}^{\top}a_{jk},
    \qquad
    Q_{jk}
    :=
    \|a_{jk}\|_2^2-\operatorname{tr}(\Gamma).
\]

The cross term between
\(L_{jk}\) and
\(Q_{jk}\)
has expectation zero: the first factor is odd in \(a_{jk}\), while the
second is even.  Therefore
\[
\begin{aligned}
    \mathbb E[e_{jk}^2]
    &=
    4h_{jk}^{\top}\Gamma h_{jk}
    +
    2\operatorname{tr}(\Gamma^2)\\
    &=
    \frac{8\sigma^2}{m}
    h_{jk}^{\top}\Psi h_{jk}
    +
    \frac{8\sigma^4}{m^2}
    \operatorname{tr}(\Psi^2).
\end{aligned}
\]

We next control the higher moments of \(e_{jk}\).  For \(q=1,2,3\), the elementary inequality \[ |a+b|^{2q} \le 2^{2q-1} \left( |a|^{2q}+|b|^{2q} \right) \] gives \[ \mathbb E|e_{jk}|^{2q} \le C_q \left( \mathbb E|L_{jk}|^{2q} + \mathbb E|Q_{jk}|^{2q} \right). \] Since \(L_{jk}\) is centered Gaussian with variance \[ \operatorname{Var}(L_{jk}) = 4h_{jk}^{\top}\Gamma h_{jk}, \] its Gaussian moments satisfy \[ \begin{aligned} \mathbb E|L_{jk}|^{2q} &= \mathbb E|Z|^{2q} \left( 4h_{jk}^{\top}\Gamma h_{jk} \right)^q\\ &\le C_q \left( h_{jk}^{\top}\Gamma h_{jk} \right)^q, \end{aligned} \] where \(Z\sim N(0,1)\). For the quadratic term, diagonalize \[ \Gamma = U\operatorname{diag}(\lambda_1,\ldots,\lambda_m)U^\top. \] If \(g\sim N(0,I_m)\), then \[ a_{jk}\stackrel{d}{=}\Gamma^{1/2}g, \] and hence \[ Q_{jk} \stackrel{d}{=} \sum_{\ell=1}^m \lambda_\ell(g_\ell^2-1). \] 
Thus \(Q_{jk}\) belongs to the second-order homogeneous Gaussian chaos. Gaussian hypercontractivity (Proposition~2.6,~\cite{nourdin2010invariance}) implies that, for every \(p\ge2\), \[ \|Q_{jk}\|_{L^p} \le (p-1)\|Q_{jk}\|_{L^2}. \] Taking \(p=2q\) gives \[ \begin{aligned} \mathbb E|Q_{jk}|^{2q} &= \|Q_{jk}\|_{L^{2q}}^{2q}\\ &\le (2q-1)^{2q} \|Q_{jk}\|_{L^2}^{2q}. \end{aligned} \] Since \[ \mathbb E Q_{jk}^2 = 2\operatorname{tr}(\Gamma^2), \] we obtain \[ \mathbb E|Q_{jk}|^{2q} \le C_q \left( \operatorname{tr}(\Gamma^2) \right)^q. \] Combining the linear and quadratic bounds yields \[ \begin{aligned} \mathbb E|e_{jk}|^{2q} &\le C_q \left[ \left( h_{jk}^{\top}\Gamma h_{jk} \right)^q + \left( \operatorname{tr}(\Gamma^2) \right)^q \right]\\ &\le C_q \left[ h_{jk}^{\top}\Gamma h_{jk} + \operatorname{tr}(\Gamma^2) \right]^q, \qquad q=1,2,3. \end{aligned} \]
Using
\[
    h_{jk}^{\top}\Gamma h_{jk}
    \le
    \frac{2\sigma^2\|\Psi\|_{\op}}{m}
    \|h_{jk}\|_2^2
\]
and
\[
    \operatorname{tr}(\Gamma^2)
    =
    \frac{4\sigma^4}{m^2}\operatorname{tr}(\Psi^2),
\]
together with the signal-moment assumption,
\[
    \frac1{N^2}
    \sum_{j,k=1}^N
    \|h_{jk}\|_2^{2q}
    \le
    C_q B_N^q,
    \qquad q=1,2,3,
\]
we obtain
\[
    \frac1{N^2}
    \sum_{j,k=1}^N
    \mathbb E|e_{jk}|^{2q}
    \le
    C_q\omega^{2q},
    \qquad q=1,2,3,
\]
where
\[
    \omega^2
    :=
    \frac{
        \sigma^2\|\Psi\|_{\op}B_N
    }{m}
    +
    \frac{
        \sigma^4\operatorname{tr}(\Psi^2)
    }{m^2}.
\]

For \(j\ne k\), define
\[
    x_{jk}:=d_{jk}+2\sigma^2,
    \qquad
    y_{jk}:=\|u_j-u_k\|_2^2=x_{jk}+e_{jk}.
\]
Both \(x_{jk}\) and \(y_{jk}\) belong to \([0,\infty)\).  Let
\[
    f(t):=e^{-\tau t},
    \qquad t\ge0.
\]
Since
\[
    \alpha K^{\rm sig}(j,k)
    =
    \frac1N f(x_{jk}),
\]
the off-diagonal residual is
\[
    E^{\rm gauss}(j,k)
    =
    \frac1N
    \left[
        f(y_{jk})-f(x_{jk})
    \right].
\]

On the diagonal, the observed and target kernels agree exactly:
\[
    K^{\rm obs}(j,j)=\frac1N,
\]
and
\[
    \alpha K^{\rm sig}(j,j)+\chi
    =
    \frac{\alpha}{N}
    +
    \frac{1-\alpha}{N}
    =
    \frac1N.
\]
Hence
\[
    E^{\rm gauss}(j,j)=0.
\]

For \(j\ne k\), Taylor's theorem with integral remainder~\cite{firey1960remainder} gives
\[
    f(y_{jk})-f(x_{jk})
    =
    f'(x_{jk})e_{jk}
    +
    \frac12f''(x_{jk})e_{jk}^2
    +
    \mathcal R_{jk},
\]
where
\[
    \mathcal R_{jk}
    =
    \frac{e_{jk}^3}{2}
    \int_0^1
    (1-r)^2
    f^{(3)}(x_{jk}+re_{jk})\,dr.
\]
The line segment between \(x_{jk}\) and \(y_{jk}\) is contained in
\([0,\infty)\), and therefore
\[
    |f^{(3)}(t)|
    =
    \tau^3e^{-\tau t}
    \le
    \tau^3,
    \qquad t\ge0.
\]
It follows that
\[
    |\mathcal R_{jk}|
    \le
    \frac{\tau^3}{6}|e_{jk}|^3,
    \qquad
    |\mathcal R_{jk}|^2
    \le
    \frac{\tau^6}{36}|e_{jk}|^6.
\]
Consequently,
\[
    \frac1{N^2}
    \sum_{j,k}
    \mathbb E|\mathcal R_{jk}|^2
    \le
    C\tau^6\omega^6.
\]

Define the three random matrices
\[
    A^{(1)}(j,k)
    :=
    \mathbf 1_{\{j\ne k\}}
    \frac{f'(x_{jk})}{N}e_{jk},
\]
\[
    A^{(2)}(j,k)
    :=
    \mathbf 1_{\{j\ne k\}}
    \frac{f''(x_{jk})}{2N}e_{jk}^2,
\]
and
\[
    A^{(R)}(j,k)
    :=
    \mathbf 1_{\{j\ne k\}}
    \frac{\mathcal R_{jk}}{N}.
\]
Then
\[
    E^{\rm gauss}
    =
    A^{(1)}+A^{(2)}+A^{(R)}.
\]

Define the bias and centered fluctuation by
\[
    \mathcal B
    :=
    \mathbb E[E^{\rm gauss}],
    \qquad
    \Xi
    :=
    E^{\rm gauss}-\mathcal B.
\]
Since \(\mathbb E[e_{jk}]=0\),
\[
    \mathbb E[A^{(1)}]=0,
\]
and hence
\[
    \mathcal B
    =
    \mathbb E[A^{(2)}]
    +
    \mathbb E[A^{(R)}].
\]

Because
\[
    |f''(x)|\le\tau^2,
    \qquad x\ge0,
\]
Jensen's inequality gives
\[
\begin{aligned}
    \|\mathbb E A^{(2)}\|_F^2
    &=
    \sum_{j\ne k}
    \left|
        \frac{f''(x_{jk})}{2N}
        \mathbb E[e_{jk}^2]
    \right|^2\\
    &\le
    \frac{\tau^4}{4N^2}
    \sum_{j\ne k}
    \mathbb E[e_{jk}^4]\\
    &\le
    C\tau^4\omega^4.
\end{aligned}
\]
Therefore
\[
    \|\mathbb E A^{(2)}\|_F
    \le
    C\tau^2\omega^2.
\]

Similarly,
\[
\begin{aligned}
    \|\mathbb E A^{(R)}\|_F^2
    &\le
    \frac1{N^2}
    \sum_{j\ne k}
    \mathbb E|\mathcal R_{jk}|^2\\
    &\le
    C\tau^6\omega^6,
\end{aligned}
\]
so
\[
    \|\mathbb E A^{(R)}\|_F
    \le
    C\tau^3\omega^3.
\]
Hence
\[
    \|\mathcal B\|_F
    \le
    C
    \left(
        \tau^2\omega^2
        +
        \tau^3\omega^3
    \right).
\]

We next bound the centered fluctuation.  Write
\[
\begin{aligned}
    \Xi
    &=
    A^{(1)}
    +
    \left(
        A^{(2)}-\mathbb E A^{(2)}
    \right)\\
    &\quad+
    \left(
        A^{(R)}-\mathbb E A^{(R)}
    \right).
\end{aligned}
\]
For the first-order term,
\[
\begin{aligned}
    \mathbb E\|A^{(1)}\|_F^2
    &=
    \frac1{N^2}
    \sum_{j\ne k}
    f'(x_{jk})^2
    \mathbb E[e_{jk}^2]\\
    &\le
    C\tau^2\omega^2.
\end{aligned}
\]
For the centered second-order term and higher-order term,
\[
    \mathbb E
    \left\|
        A^{(2)}-\mathbb E A^{(2)}
    \right\|_F^2\le
    \mathbb E\|A^{(2)}\|_F^2
    \le
    C\tau^4\omega^4.
\]
\[
    \mathbb E
    \left\|
        A^{(R)}-\mathbb E A^{(R)}
    \right\|_F^2\le
    \mathbb E\|A^{(R)}\|_F^2
    \le
    C\tau^6\omega^6.
\]
Using
\[
    \|A+B+C\|_F^2
    \le
    3\left(
        \|A\|_F^2+\|B\|_F^2+\|C\|_F^2
    \right),
\]
we conclude that
\[
    \mathbb E\|\Xi\|_F^2
    \le
    C
    \left(
        \tau^2\omega^2
        +
        \tau^4\omega^4
        +
        \tau^6\omega^6
    \right).
\]

Restoring the view index \(s\), we may therefore take
\[
    b_s
    :=
    C_{s,b}
    \left(
        \tau_s^2\eta_s^2
        +
        \tau_s^3\eta_s^3
    \right),
\]
and
\[
    v_s^2
    :=
    C_{s,v}
    \left(
        \tau_s^2\eta_s^2
        +
        \tau_s^4\eta_s^4
        +
        \tau_s^6\eta_s^6
    \right).
\]
This proves
\[
    \|\mathcal B_s\|_F\le b_s,
    \qquad
    \mathbb E\|\Xi_s\|_F^2\le v_s^2.
\]

For the single-view high-probability bound,
\[
    \|E_s^{\rm gauss}\|_F
    \le
    \|\mathcal B_s\|_F+\|\Xi_s\|_F.
\]
\end{proof}

\begin{lemma}[Mixed target eigengap]
\label{lem:mixed-target-gap}
Define
\[
    K_{\rm tar}(w)
    :=
    \sum_{s=1}^S
    w_s\alpha_sK_s^{\rm sig}
    +
    \left(
        \sum_{s=1}^S w_s\chi_s
    \right)I_N .
\]
Under Assumptions~\ref{ass:ek-shared-subspace} and
\ref{ass:ek-eigengap}, the top-\(d\) eigenspace of \(K_{\rm tar}(w)\) is
\(\operatorname{span}(Z_{\rm sig})\).  Moreover, its eigengap is at least
\[
    \gamma(w)
    :=
    \lambda_{\min}
    \left(
        \sum_{s=1}^S w_s\alpha_s\Lambda_s
    \right)
    -
    \left\|
        \sum_{s=1}^S w_s\alpha_sR_s^{\rm sig}
    \right\|_{\op}.
\]
Recall
\[
    \gamma_s
    :=
    \lambda_{\min}(\Lambda_s)-\|R_s^{\rm sig}\|_{\op},
\]
 then
\[
    \gamma(w)
    \ge
    \sum_{s=1}^S w_s\alpha_s\gamma_s.
\]
\end{lemma}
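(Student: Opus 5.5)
The proof is a direct block-diagonal computation. By Assumption~\ref{ass:ek-shared-subspace}, each \(K_s^{\rm sig}=Z_{\rm sig}\Lambda_sZ_{\rm sig}^\top+R_s^{\rm sig}\) with \(R_s^{\rm sig}Z_{\rm sig}=0\) and \(Z_{\rm sig}^\top R_s^{\rm sig}=0\). Set \(\bar\Lambda:=\sum_s w_s\alpha_s\Lambda_s\), \(\bar R:=\sum_s w_s\alpha_sR_s^{\rm sig}\) and \(\bar\chi:=\sum_s w_s\chi_s\). Linearity then gives
\[
K_{\rm tar}(w)=Z_{\rm sig}\bar\Lambda Z_{\rm sig}^\top+\bar R+\bar\chi I_N,\qquad \bar RZ_{\rm sig}=0 .
\]
Complete \(Z_{\rm sig}\) to an orthonormal basis \([Z_{\rm sig},Z_\perp]\) of \(\R^N\). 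In this basis \(K_{\rm tar}(w)\) is block diagonal, with blocks \(\bar\Lambda+\bar\chi I_d\) on \(\operatorname{span}(Z_{\rm sig})\) and \(Z_\perp^\top\bar RZ_\perp+\bar\chi I_{N-d}\) on its complement. The off-diagonal blocks vanish because \(Z_\perp^\top Z_{\rm sig}=0\) and \(\bar R Z_{\rm sig}=0\). Hence \(\operatorname{span}(Z_{\rm sig})\) is an invariant subspace.

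Next I would compare spectra. The identity shift \(\bar\chi I_N\) moves all eigenvalues equally and cancels in every gap. Every eigenvalue of the first block is at least \(\lambda_{\min}(\bar\Lambda)+\bar\chi\). Every eigenvalue of the second block is at most \(\|Z_\perp^\top\bar RZ_\perp\|_{\op}+\bar\chi\le\|\bar R\|_{\op}+\bar\chi\). So the separation between the two blocks' spectra is at least \(\gamma(w)\).

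The main obstacle is that \(\gamma(w)>0\) must be established; otherwise the top-\(d\) eigenspace need not equal \(\operatorname{span}(Z_{\rm sig})\). This comes from the second claim, which I would prove separately. Because \(\lambda_{\min}\) is concave (superadditive) on symmetric matrices and \(w_s\alpha_s\ge0\),
\[
\lambda_{\min}(\bar\Lambda)\ge\sum_s w_s\alpha_s\lambda_{\min}(\Lambda_s).
\]
The triangle inequality gives \(\|\bar R\|_{\op}\le\sum_s w_s\alpha_s\|R_s^{\rm sig}\|_{\op}\). Subtracting yields \(\gamma(w)\ge\sum_s w_s\alpha_s\gamma_s\). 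Each \(\alpha_s>0\) (\(\alpha_s=1\) or \(e^{-2\tau_s\sigma_s^2}\)), each \(\gamma_s\ge\gamma_0>0\) by Assumption~\ref{ass:ek-eigengap}, and \(\sum_s w_s=1\). The right-hand side is therefore at least \(\gamma_0\min_s\alpha_s>0\).

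Finally, \(\gamma(w)>0\) means every eigenvalue of the \(Z_{\rm sig}\) block strictly exceeds every eigenvalue of the complementary block. The \(d\) largest eigenvalues are therefore exactly those of the first block, and their eigenspace is \(\operatorname{span}(Z_{\rm sig})\). The gap between the \(d\)-th and \((d+1)\)-th eigenvalues is at least \(\gamma(w)\), which is the form needed for the Davis--Kahan step.
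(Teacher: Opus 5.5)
Your proposal is correct and follows the same route as the paper: you block-diagonalize \(K_{\rm tar}(w)\) along \(\operatorname{span}(Z_{\rm sig})\oplus\operatorname{span}(Z_{\rm sig})^\perp\), discard the scalar identity shift, and bound \(\gamma(w)\) from below using superadditivity of \(\lambda_{\min}\) and the triangle inequality for \(\|\cdot\|_{\op}\). Your write-up is more complete than the paper's proof. The paper only asserts that the gap bound ``follows exactly as before'' and never checks \(\gamma(w)\ge\gamma_0\min_s\alpha_s>0\), which, as you note, is what ensures the top-\(d\) eigenspace is \(\operatorname{span}(Z_{\rm sig})\).
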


\begin{proof}
By the shared-subspace decomposition in Assumption~\ref{ass:ek-shared-subspace},
\[
    K_s^{\rm sig}
    =
    Z_{\rm sig}\Lambda_sZ_{\rm sig}^{\top}
    +
    R_s^{\rm sig},
    \qquad
    R_s^{\rm sig}Z_{\rm sig}=0,
    \qquad
    Z_{\rm sig}^{\top}R_s^{\rm sig}=0.
\]
Therefore
\[
\begin{aligned}
    K_{\rm tar}(w)
    =
    Z_{\rm sig}
    \left(
        \sum_{s=1}^S
        w_s\alpha_s\Lambda_s
    \right)
    Z_{\rm sig}^{\top}+
    \sum_{s=1}^S
    w_s\alpha_sR_s^{\rm sig}
    +
    \left(
        \sum_{s=1}^S
        w_s\chi_s
    \right)I_N.
\end{aligned}
\]
The final term is a scalar multiple of \(I_N\), so it shifts all eigenvalues
equally and changes neither eigenspaces nor eigengaps.  The remaining matrix
is block diagonal with respect to
\[
    \mathbb R^N
    =
    \operatorname{span}(Z_{\rm sig})
    \oplus
    \operatorname{span}(Z_{\rm sig})^\perp.
\]
The claimed eigengap bound follows exactly as before.  Finally,
\[
\begin{aligned}
    \gamma(w)
    &\ge
    \sum_{s=1}^S
    w_s\alpha_s\lambda_{\min}(\Lambda_s)
    -
    \sum_{s=1}^S
    w_s\alpha_s\|R_s^{\rm sig}\|_{\op}\\
    &=
    \sum_{s=1}^S
    w_s\alpha_s\gamma_s.
\end{aligned}
\]
\end{proof}

\begin{proof}[Proof of Theorem~\ref{thm:main-pretrain}]

Consider
\[
    K_{\rm tar}(w)
    :=
    \sum_{s=1}^S w_s\alpha_sK_s^{\rm sig}
    +
    \left(
        \sum_{s=1}^Sw_s\chi_s
    \right)I_N .
\]
By Lemma~\ref{lem:mixed-target-gap}, the top-\(d\) eigenspace of
\(K_{\rm tar}(w)\) is \(Z_{\rm sig}\), with eigengap at least \(\gamma(w)\).

For each view, let $\mathcal B_s$, $\Xi_s$, $b_s$, $d_s$, and $v_s$ denote the corresponding quantities from Theorem~\ref{thm:main-pretrain} and Lemmas~\ref{lem:ip-kernel-approx}, \ref{lem:linear-kernel-approx}, and~\ref{lem:rbf-kernel-approx}, depending on the kernel type. We have
\[
       K_s^{\rm obs}
    -
    \alpha_sK_s^{\rm sig}
    -
    \chi_sI_N
    =
    {\mathcal B}_s+\Xi_s,
    \qquad
    \mathbb E\Xi_s=0,
\] and
\[
    \|{\mathcal B}_s\|_F
    \le b_s+\mathbf1_{\{s\in \mathcal S_{\rm ip}\}}d_s,
    \qquad
    \mathbb E\|\Xi_s\|_F^2
    \le
    v_s^2.
\]
Since the centered matrices are independent across views,
\(
    \mathbb E\Xi_s=0
\)
and for \(s\ne t\),
\(
    \mathbb E
    \langle\Xi_s,\Xi_t\rangle_F
    =
    \left\langle
        \mathbb E\Xi_s,
        \mathbb E\Xi_t
    \right\rangle_F
    =
    0
\).
Consequently,
\[
\begin{aligned}
    \mathbb E
    \left\|
        \sum_{s=1}^S
        w_s\Xi_s
    \right\|_F^2
    &=
    \sum_{s=1}^S
    w_s^2
    \mathbb E\|\Xi_s\|_F^2\\
    &\le
    \sum_{s=1}^S
    w_s^2v_s^2
\end{aligned}
\]
By Markov's inequality applied to the squared Frobenius norm,
\[
    \mathbb P
    \left\{
        \left\|
            \sum_{s=1}^S
            w_s\Xi_s
        \right\|_F
        >
        \sqrt{\frac{\sum_{s=1}^S
    w_s^2v_s^2}{\delta}}
    \right\}
    \le
    \delta.
\]
Then, with probability at least \(1-\delta\) and by triangle inequality,
\[
    \|K^{\rm obs}(w)-K_{\rm tar}(w)\|_F
    \le
    \sum_{s=1}^S w_sb_s+\sum_{s\in \mathcal S_{\rm ip}}w_sd_s
    +
    \sqrt{\frac{\sum_{s=1}^S
    w_s^2v_s^2}{{\delta}}}.
\]
Applying the Frobenius-norm form of the Davis--Kahan sin-theta theorem~\citep{yu2015useful} gives
\[
    \|\sin\Theta(\widehat Z,Z_{\rm sig})\|_F
    \le
    \frac{C}{\gamma(w)}
    \|K^{\rm obs}(w)-K_{\rm tar}(w)\|_F.
\]
Therefore, with probability at least \(1-\delta\),
\[
    {
    \|\sin\Theta(\widehat Z,Z_{\rm sig})\|_F
    \le
    \frac{C}{\gamma(w)}
    \left[
        \sum_{s=1}^S
        w_s b_s+
        \sum_{s\in \mathcal S_{\rm ip}}w_sd_s+
        \left(
            \frac1\delta
            \sum_{s=1}^S
            w_s^2v_s^2
        \right)^{1/2}
    \right].
    }
\]
Using $(a+b)^2\leq 2(a^2+b^2)$, we obtain
\[
    \boxed{
    \|\sin\Theta(\widehat Z,Z_{\rm sig})\|_F^2
    \le
    \frac{C}{\gamma(w)^2}
\left[
\left(
\sum_{s=1}^S w_sb_s+\sum_{s\in \mathcal S_{\rm ip}}w_sd_s
\right)^2
+
\frac1{\delta}
\sum_{s=1}^S w_s^2v_s^2
\right].
    }
\]
\end{proof}
\subsection{Proof of Theorem~\ref{thm:main-end2end}}
\label{app:proof-end2end}

We next prove the supervised end-to-end statement.  The argument is conditional on the unlabeled pretraining sample, so that the empirical KPCA coordinate matrix $\widehat Z$ is fixed throughout the supervised analysis.  The proof has three components: a fixed-embedding oracle inequality, a transfer bound from empirical KPCA coordinates to the true latent coordinates, and a ReLU approximation bound for the smooth relation surfaces.  Write
\[
    \mathcal D_\star:=[-R_\star,R_\star]^d\times[-R_\star,R_\star]^d.
\]
We take $R_\star$ large enough to contain the true latent coordinates and the transformed KPCA coordinates used below; equivalently, one may first apply a standard Sobolev extension to a fixed enlarged cube.  This only changes constants depending on $(d,\beta,R_\star,S_\beta,B,\kappa_A)$.
Throughout this appendix, $\mathcal F$ abbreviates $\mathcal F_{\widehat Z}(D_{\rm net},W,B)$ after conditioning on the pretraining sample.

\begin{lemma}[Fixed-embedding oracle inequality]
\label{lem:fixed-oracle-app}
Condition on the pretraining sample and define the frozen class
\[
    \cF_{\widehat Z}:=\{x\mapsto f(\widehat Z;x):f\in\cF\},
    \qquad
    p:=\operatorname{Pdim}(\cF_{\widehat Z}).
\]
Assume $p\le n$.  Under Assumption~\ref{ass:sup-sampling-app}, with conditional probability at least
$1-10(p/en)^{p}$,
\begin{equation}
\label{eq:fixed-oracle-app-expanded}
    \|\widehat f(\widehat Z;\cdot)-\gamma\|_P^2
    \le
    27\inf_{f\in\cF}\|f(\widehat Z;\cdot)-\gamma\|_P^2
    +C(\sigma_m^2+B^2)\frac{p}{n}\log\frac{en}{p}
    +6\delta_{\rm opt}.
\end{equation}
Furthermore, for relation-wise ReLU heads with common depth $D_{\rm net}$ and parameter budget $W$,
\[
    p\lesssim K D_{\rm net}W\log W.
\]
\end{lemma}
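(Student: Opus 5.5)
Conditional on the pretraining sample, $\widehat Z$ is fixed and independent of $\mathcal D_{\rm sup}$ by Assumption~\ref{ass:pretrain-supervised-independence}. Lemma~\ref{lem:fixed-oracle-app} therefore reduces to a standard least-squares oracle inequality over a fixed class $\cF_{\widehat Z}$ of $[-B,B]$-valued functions with pseudo-dimension $p$. I would follow the nonparametric regression argument of \citet{liu2024representation}, in the spirit of Györfi et al.\ and Schmidt-Hieber, in three steps.

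\emph{Step 1: in-sample bound.} Start from the approximate ERM inequality \eqref{eq:supervised-approx-erm}. For any fixed $f^\circ\in\cF$, expanding $Y_i=\gamma(X_i)+\eps_i$ gives
\[
\|\widehat f-\gamma\|_n^2\le\|f^\circ-\gamma\|_n^2+\frac2n\sum_{i=1}^n\eps_i\{\widehat f(X_i)-f^\circ(X_i)\}+\delta_{\rm opt}.
\]
Conditionally on $X_{1:n}$, the noise terms are independent sub-Gaussian. I would control the supremum of the multiplier process over the normalized class $\{(f-f^\circ)/\|f-f^\circ\|_n\}$ by chaining, or more simply by a union bound over an empirical $\|\cdot\|_n$-cover of $\cF_{\widehat Z}$. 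Haussler's bound gives the cover size $N(\epsilon)\le(CBn/(\epsilon p))^{p}$ in terms of the pseudo-dimension. The supremum is then at most $\sigma_m\sqrt{p\log(en/p)/n}\,\|\widehat f-f^\circ\|_n$ plus discretization error. Applying $2ab\le a^2/4+4b^2$ and the triangle inequality yields
\[
\|\widehat f-\gamma\|_n^2\le 3\|f^\circ-\gamma\|_n^2+C\sigma_m^2\frac pn\log\frac{en}p+2\delta_{\rm opt}.
\]
Choosing the cover radius $\epsilon\asymp B\,p/n$ gives tail probability of order $(p/en)^p$.

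\emph{Step 2: out-of-sample transfer.} To pass from $\|\cdot\|_n$ to $\|\cdot\|_P$ for the data-dependent $\widehat f$, I would use a uniform ratio-type deviation inequality for bounded classes. For $g=(f-\gamma)^2$ with $0\le g\le 4B^2$, Bernstein's inequality combined with the pseudo-dimension cover gives, uniformly over $f\in\cF$,
\[
\|f-\gamma\|_P^2\le 2\|f-\gamma\|_n^2+CB^2\frac pn\log\frac{en}p.
\]
This is the localized, or Talagrand-type, step. It holds again with probability $1-O((p/en)^p)$, and I would use a symmetrization/ghost-sample argument as in Györfi et al., Theorem~11.4. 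For the fixed comparator $f^\circ$, Bernstein gives $\|f^\circ-\gamma\|_n^2\le 2\|f^\circ-\gamma\|_P^2+CB^2\log(1/\cdot)/n$. Chaining the inequalities and taking $f^\circ$ near the infimum produces multiplicative constants of the stated type: $27=3\cdot 3\cdot 3$ arises from successive factor-2/3 inflations, and $6=3\cdot2$ for $\delta_{\rm opt}$. A union bound collects the failure probabilities into $10(p/en)^p$.

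\emph{Step 3: pseudo-dimension.} The class is the direct sum over $K$ relations: on triples with relation $r$, only $f_r$ acts on the fixed inputs $(\widehat z_h,\widehat z_t)$. Clipping by $\Pi_B$ is monotone and does not increase pseudo-dimension. Bartlett et al.\ (2019) give each ReLU network class $\mathrm{Pdim}\lesssim D_{\rm net}W\log W$. For a disjoint-domain union of $K$ classes, the pseudo-dimensions add, because any shattered set splits by relation and each part must be shattered by the corresponding class. Hence $p\lesssim KD_{\rm net}W\log W$.

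I expect Step 2 to be the main obstacle. It requires a uniform deviation bound at the fast rate $p/n$, not $\sqrt{p/n}$, and this is where localization and the exact probability $10(p/en)^p$ must be tracked. I would import it essentially verbatim from the fixed-representation analysis of \citet{liu2024representation}, checking that conditioning on $\widehat Z$ leaves their hypotheses intact.
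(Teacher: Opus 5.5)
Your proposal is correct and follows the paper's route. The paper conditions on the pretraining sample so that $\widehat Z$ is fixed, and then invokes the fixed-representation least-squares oracle inequality of \citet{liu2024representation}; your Steps 1--2 simply unpack that cited theorem. It then bounds $p$ by adding the per-relation ReLU pseudo-dimensions, which the paper only asserts and your disjoint-domain shattering argument actually justifies. Your account of where $27$ and $6$ come from is guesswork, but your own chain of inequalities gives the smaller constants $12$ and $4$, and these imply the stated bound.
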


\begin{proof}
After conditioning on the pretraining sample, $\widehat Z$ is deterministic and the only randomness in the supervised stage is the labeled sample.  Therefore the usual bounded least-squares oracle inequality for a fixed function class applies to $\cF_{\widehat Z}$.  In the notation used here, Theorem~1 of \citep{liu2024representation} gives~\eqref{eq:fixed-oracle-app-expanded} for a $\delta_{\rm opt}$-approximate ERM.  The pseudo-dimension bound follows from the standard pseudo-dimension estimate for relation-wise ReLU networks, as in Lemma~3 of \citep{liu2024representation}; with $K$ heads sharing architecture $(D_{\rm net},W)$, the contributions add over relations and give $p\lesssim K D_{\rm net}W\log W$.
\end{proof}

\begin{lemma}[Lipschitz property of the smooth oracle surfaces]
\label{lem:lipschitz-app}
Under Assumption~\ref{ass:smooth-kg-app}, since $\beta\ge1$, each relation surface $g_r^\star$ is Lipschitz on its domain.  More precisely, there exists
$L_\star\le C(d)S_\beta$ such that, for all $(a,b),(a',b')$ in the latent domain,
\[
    |g_r^\star(a,b)-g_r^\star(a',b')|
    \le
    L_\star\big(\|a-a'\|_2+\|b-b'\|_2\big).
\]
\end{lemma}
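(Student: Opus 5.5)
The plan is to derive the Lipschitz bound from the $W^{\beta,\infty}$ control of first-order weak derivatives, using convexity of the domain. Since $\beta\ge1$, Assumption~\ref{ass:smooth-kg-app} gives $\|D^\alpha g_r^\star\|_{L^\infty(\mathcal D_\star)}\le S_\beta$ for every multi-index with $|\alpha|=1$. So each of the $2d$ partial derivatives $\partial_i g_r^\star$ is essentially bounded by $S_\beta$. Consequently the weak gradient satisfies $\|\nabla g_r^\star\|_2\le \sqrt{2d}\,S_\beta$ almost everywhere on $\mathcal D_\star$.

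The key step is to pass from a bounded weak gradient to a genuine Lipschitz estimate. I will use the standard fact that on a convex open set (here the interior of the cube $\mathcal D_\star$), a function in $W^{1,\infty}$ has a representative that is Lipschitz with constant $\|\nabla g\|_{L^\infty}$ in the Euclidean metric. The cleanest route is mollification.
\begin{enumerate}
\item Set $g_\epsilon:=g_r^\star*\rho_\epsilon$ on the shrunken cube $[-R_\star+\epsilon,R_\star-\epsilon]^{2d}$. Then $g_\epsilon$ is smooth with $\nabla g_\epsilon=(\nabla g_r^\star)*\rho_\epsilon$, so $\|\nabla g_\epsilon\|_2\le\sqrt{2d}\,S_\beta$ pointwise (a convex combination of vectors of bounded norm).
\item The fundamental theorem of calculus along the segment joining two points of the (convex) shrunken cube gives $|g_\epsilon(x)-g_\epsilon(x')|\le \sqrt{2d}\,S_\beta\|x-x'\|_2$.
\item Let $\epsilon\to0$. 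Since $g_\epsilon\to g_r^\star$ at Lebesgue points, hence a.e., the bound holds a.e. It then extends to a continuous representative on all of $\mathcal D_\star$, including the boundary, by uniform continuity. We identify $g_r^\star$ with this representative, which is legitimate since the model only evaluates it at latent coordinates and the paper implicitly treats $g_r^\star$ as a pointwise-defined function.
\end{enumerate}

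It remains to convert the Euclidean metric on $\R^{2d}$ into the stated form. For $x=(a,b)$ and $x'=(a',b')$,
\[
\|x-x'\|_2=\bigl(\|a-a'\|_2^2+\|b-b'\|_2^2\bigr)^{1/2}\le\|a-a'\|_2+\|b-b'\|_2 .
\]
Hence $L_\star:=\sqrt{2d}\,S_\beta$ works, giving $C(d)=\sqrt{2d}$ uniformly over $r\in[K]$.

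The main obstacle is step 3: identifying the a.e.\ defined Sobolev function with its Lipschitz representative, and handling the boundary of the closed cube. Both are standard (Evans--Gariepy, Rademacher/Morrey for $W^{1,\infty}$ on convex or Lipschitz domains). I would simply cite that $W^{1,\infty}(U)=\mathrm{Lip}(U)$ with equal constants for convex $U$, and give the mollification argument above as justification. If the domain were only assumed Lipschitz rather than convex, I would first apply the Sobolev extension mentioned after Assumption~\ref{ass:smooth-kg-app}, which only changes $C(d)$.
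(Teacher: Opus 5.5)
Your proposal is correct and follows the same route as the paper. You bound the first-order weak derivatives by $S_\beta$ to get $\|\nabla g_r^\star\|_2\le\sqrt{2d}\,S_\beta$, apply the mean-value inequality on the convex cube $\mathcal D_\star$, and finish with $\|(a-a',b-b')\|_2\le\|a-a'\|_2+\|b-b'\|_2$; your mollification step and explicit choice of the Lipschitz representative only supply rigor that the paper's one-line application of the mean-value inequality to a weak gradient leaves implicit.
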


\begin{proof}
The Sobolev bound $\|g_r^\star\|_{W^{\beta,\infty}}\le S_\beta$ with $\beta\ge1$ implies that all first-order weak derivatives are essentially bounded by a constant depending only on $d$ times $S_\beta$.  Since the latent domain is convex, the mean-value inequality gives
\[
    |g_r^\star(u)-g_r^\star(v)|
    \le
    \sup_{z}\|\nabla g_r^\star(z)\|_2\,\|u-v\|_2
    \le C(d)S_\beta\|u-v\|_2.
\]
Writing $u=(a,b)$ and $v=(a',b')$, and using
$\|(a-a',b-b')\|_2\le\|a-a'\|_2+\|b-b'\|_2$, gives the claimed bound.
\end{proof}
\begin{lemma}[KPCA coordinate alignment]
\label{lem:kpca-align-app}
Let
\[
    Q\in\arg\min_{R\in O(d)}\|\widehat ZR-Z_{\rm sig}\|_F.
\]
Define the transformed empirical coordinates
\[
    \widetilde Z:=\widehat ZQA_\star^{-1}.
\]
Then, under Assumption~\ref{ass:alignment-app},
\[
    \Delta_{\rm true}
    :=
    \max_{j\in[N]}\|\widetilde z_j-z_j^\star\|_2
    \le
    C\kappa_A
    \|\sin\Theta(\widehat Z,Z_{\rm sig})\|_F .
\]
\end{lemma}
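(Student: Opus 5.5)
The idea is to reduce the row-wise error of $\widetilde Z$ to the Procrustes distance between $\widehat Z$ and $Z_{\rm sig}$, and then control that distance by the principal angles.

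\textbf{Step 1 (pull out $A_\star^{-1}$).} By Assumption~\ref{ass:alignment-app}, $Z^\star = Z_{\rm sig}A_\star^{-1}$. Hence
\[
    \widetilde Z - Z^\star = (\widehat ZQ - Z_{\rm sig})A_\star^{-1}.
\]
For each row $j$ this gives
\[
    \|\widetilde z_j - z_j^\star\|_2 = \|e_j^\top(\widehat ZQ - Z_{\rm sig})A_\star^{-1}\|_2 \le \|A_\star^{-1}\|_{\op}\,\|e_j^\top(\widehat ZQ - Z_{\rm sig})\|_2 .
\]
The maximal row norm of any matrix is bounded by its Frobenius norm. Therefore
\[
    \Delta_{\rm true} \le \kappa_A\,\|\widehat ZQ - Z_{\rm sig}\|_F .
\]
This crude $\ell_{2\to\infty}\le F$ bound is all we need, since the target is a Frobenius sin-theta quantity.

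\textbf{Step 2 (Procrustes versus principal angles).} Both $\widehat Z$ and $Z_{\rm sig}$ have orthonormal columns. We invoke the standard fact (e.g.\ \citet{yu2015useful}, or Cape--Tang--Priebe) that
\[
    \min_{R\in O(d)}\|\widehat ZR - Z_{\rm sig}\|_F \le \sqrt2\,\|\sin\Theta(\widehat Z,Z_{\rm sig})\|_F .
\]
To verify it directly, take the SVD $\widehat Z^\top Z_{\rm sig} = U\cos\Theta\, V^\top$ and the choice $R=UV^\top$. Expanding the square gives
\[
    \|\widehat ZR - Z_{\rm sig}\|_F^2 = 2d - 2\operatorname{tr}(\cos\Theta) = 2\sum_i(1-\cos\theta_i).
\]
Since $\cos\theta_i\in[0,1]$, we have $1-\cos\theta_i\le 1-\cos^2\theta_i = \sin^2\theta_i$. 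The minimizer $Q$ does at least as well as $UV^\top$. Combining with Step 1 yields the claim with $C=\sqrt2$.

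\textbf{Main obstacle.} The only delicate point is checking that the principal-angle cosines are nonnegative, so that $1-\cos\theta\le\sin^2\theta$ holds. This holds because singular values are nonnegative. Beyond that, the argument is bookkeeping: the infimum in $Q$ is attained by compactness of $O(d)$, and the orthogonal factor $Q$ preserves norms. No sharper $\ell_{2\to\infty}$ perturbation theory is needed.
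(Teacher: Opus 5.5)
Your proof is correct and follows the same route as the paper. You write $\widetilde Z-Z^\star=(\widehat ZQ-Z_{\rm sig})A_\star^{-1}$, bound the maximal row norm by the Frobenius norm times $\|A_\star^{-1}\|_{\op}\le\kappa_A$, and then apply the Procrustes/principal-angle inequality. The one difference is that you verify this inequality directly from the SVD of $\widehat Z^\top Z_{\rm sig}$, which gives the explicit constant $C=\sqrt2$, whereas the paper only cites it.
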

\begin{proof}
By the Procrustes/principal-angle inequality,
\[
    \min_{R\in O(d)}\|\widehat ZR-Z_{\rm sig}\|_F
    \le
    C\|\sin\Theta(\widehat Z,Z_{\rm sig})\|_F.
\]
Since \(Z_{\rm sig}=Z^\star A_\star\), we have
\[
    \widetilde Z-Z^\star
    =
    \widehat ZQA_\star^{-1}-Z^\star
    =
    (\widehat ZQ-Z_{\rm sig})A_\star^{-1}.
\]
Therefore
\[
\begin{aligned}
    \Delta_{\rm true}
    &\le
    \|\widetilde Z-Z^\star\|_F\\
    &\le
    \|\widehat ZQ-Z_{\rm sig}\|_F\|A_\star^{-1}\|_{\op}\\
    &\le
    C\kappa_A\|\sin\Theta(\widehat Z,Z_{\rm sig})\|_F.
\end{aligned}
\]
\end{proof}

\begin{lemma}[Transfer of the oracle bias to empirical KPCA coordinates]
\label{lem:oracle-transfer-app-expanded}
Suppose there exists a relation-wise head collection $f^{\rm app}\in\cF$ such that
\[
    \sup_{r\in[K]}
    \sup_{(a,b)\in\mathcal D_\star}|f_r^{\rm app}(a,b)-g_r^\star(a,b)|\le a
\]
on $\mathcal D_\star$.  Then
\begin{equation}
\label{eq:transfer-oracle-app-expanded}
    \inf_{f\in\cF}\|f(\widehat Z;\cdot)-\gamma\|_P^2
    \le
    2a^2+8L_\star^2\Delta_{\rm true}^2.
\end{equation}
Consequently,
\begin{equation}
\label{eq:transfer-oracle-sin-app-expanded}
    \inf_{f\in\cF}\|f(\widehat Z;\cdot)-\gamma\|_P^2
    \le
    2a^2
    +16L_\star^2\kappa_A^2
    \|\sin\Theta(\widehat Z,Z_{\rm sig})\|_{\rm F}^2.
\end{equation}
\end{lemma}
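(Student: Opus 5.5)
Since the infimum is over $\mathcal F$, it suffices to exhibit one element of $\mathcal F$ whose risk is at most the right-hand side of \eqref{eq:transfer-oracle-app-expanded}. The natural candidate is the approximating network $f^{\rm app}$ composed with the linear change of coordinates $\widehat z\mapsto (QA_\star^{-1})^\top\widehat z$ from Lemma~\ref{lem:kpca-align-app}.

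Concretely, for each relation $r$ I define
\[
f_r(\widehat z_h,\widehat z_t):=f_r^{\rm app}\bigl((QA_\star^{-1})^\top\widehat z_h,(QA_\star^{-1})^\top\widehat z_t\bigr)=f_r^{\rm app}(\widetilde z_h,\widetilde z_t).
\]
The linear map can be absorbed into the first affine layer of $F_r$. This preserves depth, and it preserves the parameter budget up to the convention that first-layer weights are merely multiplied by a fixed matrix. The outer clipping $\Pi_B$ is unchanged. So $f\in\mathcal F$, possibly after a harmless enlargement of constants, which I would state as part of the class convention.

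Then, for every triple $x=(h,r,t)$, I use $\gamma(x)=g_r^\star(z_h^\star,z_t^\star)$ and split the error:
\[
|f(\widehat Z;x)-\gamma(x)|\le |f_r^{\rm app}(\widetilde z_h,\widetilde z_t)-g_r^\star(\widetilde z_h,\widetilde z_t)|+|g_r^\star(\widetilde z_h,\widetilde z_t)-g_r^\star(z_h^\star,z_t^\star)|.
\]
The first term is at most $a$, because $\widetilde z_h,\widetilde z_t\in[-R_\star,R_\star]^d$ by the choice of $R_\star$. For the second term, Lemma~\ref{lem:lipschitz-app} gives the bound $L_\star(\|\widetilde z_h-z_h^\star\|_2+\|\widetilde z_t-z_t^\star\|_2)\le 2L_\star\Delta_{\rm true}$.

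Squaring with $(u+v)^2\le 2u^2+2v^2$ gives the pointwise bound $2a^2+8L_\star^2\Delta_{\rm true}^2$. Averaging over $X\sim P$ then yields \eqref{eq:transfer-oracle-app-expanded}, since the bound is uniform in $x$.

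For \eqref{eq:transfer-oracle-sin-app-expanded}, I substitute Lemma~\ref{lem:kpca-align-app}, which gives $\Delta_{\rm true}^2\le C^2\kappa_A^2\|\sin\Theta\|_F^2$. The stated factor $16$ corresponds to taking the Procrustes constant with $C^2\le 2$. This holds because $\min_R\|\widehat ZR-Z_{\rm sig}\|_F\le\sqrt2\|\sin\Theta\|_F$ is the standard Procrustes bound.

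The main obstacle is the membership step: I must check that precomposition with the fixed linear map keeps the network in $\mathcal F_{\widehat Z}(D_{\rm net},W,B)$ without increasing $W$. This holds if the class counts first-layer weights as free real parameters, since the composed first layer has the same shape. I also need the transformed coordinates to lie in $\mathcal D_\star$ so that the sup-norm approximation applies; this is guaranteed by the standing choice of $R_\star$, or alternatively by a Sobolev extension.
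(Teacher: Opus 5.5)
Your proposal is correct and follows the paper's proof essentially step for step. Both arguments precompose $f^{\rm app}$ with $(QA_\star^{-1})^\top$ and absorb it into the first affine layer. Both then bound the pointwise error via the triangle inequality and Lemma~\ref{lem:lipschitz-app}, square, average over $P$, and substitute Lemma~\ref{lem:kpca-align-app}. Your explicit use of the Procrustes constant $\sqrt2$, so that $8\cdot 2=16$, is slightly more careful than the paper, which leaves this as an unspecified constant $C$ while still claiming the factor $16$.
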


\begin{proof}
Let
\[
    A:=QA_\star^{-1},
    \qquad
    A^\top=A_\star^{-\top}Q^\top.
\]
For a relation-wise head $f$, define the linear reparameterization
\[
    f_r^{[A]}(a,b):=f_r(A^\top a,A^\top b).
\]
For ReLU networks, precomposition with a fixed invertible linear map is absorbed into the first affine layer, so membership in the same head class is preserved up to the fixed architecture convention.  Therefore
\[
    \widetilde f^{\rm app}:=(f^{\rm app})^{[QA_\star^{-1}]}\in\cF.
\]
By the convention on \(R_\star\) stated at the beginning of the proof, the
transformed points lie in the domain where the approximation and Lipschitz
bounds are applied.  In the finite entity-kernel setting, \(\widehat Z\in
\R^{N\times d}\) itself is the empirical entity-coordinate matrix.
Let
\[
    \widetilde Z:=\widehat ZQA_\star^{-1},
\]
and let \(\widetilde z_j\) denote the \(j\)th row of \(\widetilde Z\).
Since \(\widehat Z^\top\widehat Z=I_d\), every row of \(\widehat Z\) satisfies
\[
    \|\widehat z_j\|_2
    =
    \|e_j^\top \widehat Z\|_2
    \le
    \|\widehat Z\|_{\op}
    =
    1.
\]
Therefore,
\[
    \|\widetilde z_j\|_2
    =
    \|e_j^\top\widehat ZQA_\star^{-1}\|_2
    \le
    \|e_j^\top\widehat Z\|_2
    \|Q\|_{\op}
    \|A_\star^{-1}\|_{\op}
    \le
    \kappa_A.
\]
Thus it suffices to choose \(R_\star\) large enough to contain both the true
latent coordinates \(\{z_j^\star\}_{j=1}^N\) and the transformed pretrained
coordinates \(\{\widetilde z_j\}_{j=1}^N\).

Fix a triple $x=(h,r,t)$.  By construction,
\[
    \widetilde f^{\rm app}(\widehat Z;x)
    =\widetilde f_r^{\rm app}(\widehat z_h,\widehat z_t)
    =f_r^{\rm app}(\widetilde z_h,\widetilde z_t),
\]
whereas
\[
    \gamma(h,r,t)=g_r^\star(z_h^\star,z_t^\star).
\]
Hence
\[
\begin{aligned}
    |\widetilde f^{\rm app}(\widehat Z;x)-\gamma(x)|
    &\le
    |f_r^{\rm app}(\widetilde z_h,\widetilde z_t)-g_r^\star(\widetilde z_h,\widetilde z_t)|  \\
    &\quad+
    |g_r^\star(\widetilde z_h,\widetilde z_t)-g_r^\star(z_h^\star,z_t^\star)|  \\
    &\le
    a+L_\star\big(\|\widetilde z_h-z_h^\star\|_2+\|\widetilde z_t-z_t^\star\|_2\big)  \\
    &\le
    a+2L_\star\Delta_{\rm true}.
\end{aligned}
\]
Using $(a+b)^2\le2a^2+2b^2$ and taking expectation over $X\sim P$ gives
\[
    \|\widetilde f^{\rm app}(\widehat Z;\cdot)-\gamma\|_P^2
    \le
    2a^2+8L_\star^2\Delta_{\rm true}^2.
\]
Taking the infimum over $f\in\cF$ proves~\eqref{eq:transfer-oracle-app-expanded}.  Substituting Lemma~\ref{lem:kpca-align-app} proves~\eqref{eq:transfer-oracle-sin-app-expanded}.
\end{proof}

\begin{lemma}[Yarotsky approximation on the relation surfaces]
\label{lem:yarotsky-app}
There is a constant $C_{\rm app}$ depending only on $(d,\beta,R_\star,S_\beta,B)$ such that, for all sufficiently large $W$ with $D_{\rm net}\gtrsim\log W$, there exists $f^{\rm app}\in\cF$ satisfying
\begin{equation}
\label{eq:yarotsky-app-expanded}
    \sup_{r\in[K]}\sup_{(a,b)\in\mathcal D_\star}
    |f_r^{\rm app}(a,b)-g_r^\star(a,b)|
    \le
    C_{\rm app}\Big(\frac{W}{\log W}\Big)^{-\beta/(2d)}.
\end{equation}
\end{lemma}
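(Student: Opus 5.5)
The bound will follow from the Yarotsky-type approximation theorem for Sobolev functions, applied once per relation surface $g_r^\star$. Each head $f_r$ has inputs in $\R^{2d}$, so the relevant input dimension is $2d$ and the domain is the cube $\mathcal D_\star=[-R_\star,R_\star]^{2d}$. The factor $\beta/(2d)$ in the exponent reflects this doubled dimension.

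The construction has five steps, carried out in order.
\begin{enumerate}
\item \textbf{Rescale to the unit cube.} Set $\tilde g_r(x):=g_r^\star(2R_\star x-R_\star\mathbf 1)$ for $x\in[0,1]^{2d}$. Then $\|\tilde g_r\|_{W^{\beta,\infty}([0,1]^{2d})}\le \max(1,2R_\star)^\beta S_\beta$. The affine change of variables can be absorbed into the first layer of the network, so parameter count and depth change by at most constants.
\item \textbf{Approximate with a deep ReLU network.} Apply the approximation theorem of Yarotsky (2017) for $W^{\beta,\infty}$ functions on $[0,1]^{2d}$, in its deep, log-depth form. For accuracy $\epsilon$ it yields a ReLU network $F_r$ of depth $O(\log(1/\epsilon))$ with $O(\epsilon^{-2d/\beta}\log(1/\epsilon))$ weights and $\|F_r-\tilde g_r\|_\infty\le\epsilon$. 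The construction uses a partition of unity plus local Taylor polynomials, with products implemented by the sawtooth multiplication gadget.
\item \textbf{Invert the budget.} Set $W\asymp \epsilon^{-2d/\beta}\log(1/\epsilon)$ and solve for $\epsilon$. This gives $\epsilon\asymp (W/\log W)^{-\beta/(2d)}$ up to constants depending on $(d,\beta)$. The required depth is $O(\log W)$, which is covered by the hypothesis $D_{\rm net}\gtrsim\log W$. Networks of smaller depth can be padded with identity layers, since $\mathrm{ReLU}(x)-\mathrm{ReLU}(-x)=x$.
\item \textbf{Clip.} Since $|g_r^\star|\le B$, the map $\Pi_B$ is $1$-Lipschitz and fixes $g_r^\star$. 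Hence $|\Pi_B F_r-g_r^\star|\le|F_r-g_r^\star|$, so clipping does not increase the error.
\item \textbf{Assemble across relations.} Take $f^{\rm app}=(\Pi_B F_r)_{r\in[K]}$. Each relation has its own budget $W$, so the supremum over $r$ costs nothing extra. All resulting constants depend only on $(d,\beta,R_\star,S_\beta,B)$.
\end{enumerate}

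The main obstacle is matching the exact parameterization. Yarotsky's theorem is usually stated for integer smoothness with unit-ball Sobolev norm, and with a depth/weight trade-off that must fit the class $\mathcal F(D_{\rm net},W,B)$. The first option I would try is the statement in Lemma 4 / Theorem 1 of \citet{liu2024representation}, which already packages this result for the same bounded ReLU class. If that does not fit, I would invoke Yarotsky directly and track the rescaling constants by hand. The "sufficiently large $W$" condition ensures $\epsilon<1$, so the logarithms are positive and the inversion in step 3 is valid.
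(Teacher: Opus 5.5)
Your proposal is correct and follows the paper's proof essentially step for step. Both rescale $\mathcal D_\star$ affinely to $[0,1]^{2d}$, apply Yarotsky's log-depth approximation with input dimension $2d$, invert the weight budget to get $\varepsilon\asymp(W/\log W)^{-\beta/(2d)}$ at depth $O(\log W)$, and build the $K$ heads separately under the per-relation budget $W$. Your handling of clipping (using that $\Pi_B$ is already part of the class and is $1$-Lipschitz, with $|g_r^\star|\le B$) is a slightly cleaner version of the paper's remark that a clipping subnetwork can be appended at constant cost.
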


\begin{proof}
Each $g_r^\star$ is defined on a $2d$-dimensional cube and satisfies a uniform
$W^{\beta,\infty}$ bound.  After an affine rescaling of the input cube to $[0,1]^{2d}$ and a harmless output normalization, Yarotsky's ReLU approximation theorem \citep{yarotsky2017error} gives, for every $\varepsilon\in(0,1)$, a ReLU network with depth
$O(\log(1/\varepsilon))$ and number of weights
$O(\varepsilon^{-2d/\beta}\log(1/\varepsilon))$ whose uniform error is at most $C\varepsilon$.  Applying this construction separately to the $K$ relation surfaces and using the common architecture budget gives a relation-wise collection in $\cF$.  If the class is defined with bounded outputs, the clipping map
$u\mapsto -B+\sigma(u+B)-\sigma(u-B)$ can be appended at constant cost.

Choosing
\[
    \varepsilon_W\asymp \Big(\frac{W}{\log W}\Big)^{-\beta/(2d)}
\]
ensures that the parameter budget is at most $W$ and the required depth is $O(\log W)$.  This proves~\eqref{eq:yarotsky-app-expanded}.
\end{proof}

\paragraph{Completion of the end-to-end risk proof.}
Condition on the pretraining sample.  By Lemma~\ref{lem:fixed-oracle-app}, with conditional probability at least
$1-10(p/en)^{p}$,
\[
    \|\widehat f(\widehat Z;\cdot)-\gamma\|_P^2
    \le
    27\inf_{f\in\cF}\|f(\widehat Z;\cdot)-\gamma\|_P^2
    +C(\sigma_m^2+B^2)\frac{p}{n}\log\frac{en}{p}
    +6\delta_{\rm opt}.
\]
Apply Lemma~\ref{lem:yarotsky-app} with
\[
    a=C_{\rm app}\Big(\frac{W}{\log W}\Big)^{-\beta/(2d)}.
\]
Then Lemma~\ref{lem:oracle-transfer-app-expanded} gives
\[
\begin{aligned}
    \inf_{f\in\cF}\|f(\widehat Z;\cdot)-\gamma\|_P^2
    &\le
    C\Big(\frac{W}{\log W}\Big)^{-\beta/d}
    +C L_\star^2\kappa_A^2
    \|\sin\Theta(\widehat Z,Z_{\rm sig})\|_{\rm F}^2.
\end{aligned}
\]
Substituting this oracle-bias bound into the fixed-embedding oracle inequality yields
\[
\begin{aligned}
    \|\widehat f(\widehat Z;\cdot)-\gamma\|_P^2
    \le C\Bigg[
    &\Big(\frac{W}{\log W}\Big)^{-\beta/d}
    +L_\star^2\kappa_A^2
    \|\sin\Theta(\widehat Z,Z_{\rm sig})\|_{\rm F}^2  \\
    &+(\sigma_m^2+B^2)\frac{p}{n}\log\frac{en}{p}
    +\delta_{\rm opt}
    \Bigg].
\end{aligned}
\]
This is the conditional bound. For the relation-wise-head architecture,
\[
    p\lesssim K D_{\rm net}W\log W.
\]
Combining this bound with Theorem~\ref{thm:main-pretrain} identifies the
subspace-recovery contribution from the KPCA stage. The conditional supervised
probability can then be combined with the pretraining event by the usual
conditioning argument. Let
\[
\mathcal D_{\rm pt}:=\{E_s:s\in[S]\}
\]
denote the unlabeled pretraining data, equivalently the collection
\(\{u_{s,j}:s\in[S],j\in[N]\}\).  Define the pretraining event
\[
    \mathcal E_{\rm pt}
    :=
    \left\{
    \|\sin\Theta(\widehat Z,Z_{\rm sig})\|_{F}
    \le a_{\rm pt}
    \right\},
\]
and let $\mathcal E_{\rm sup}(\widehat Z)$ denote the supervised event obtained
from the fixed-representation theorem after conditioning on $\mathcal D_{\rm pt}$.
Assume
\[
    \Pr(\mathcal E_{\rm pt})\ge 1-\delta_{\rm pt}
\]
and, for every realization of $\mathcal D_{\rm pt}$,
\[
    \Pr\!\left(
        \mathcal E_{\rm sup}(\widehat Z)
        \mid
        \mathcal D_{\rm pt}
    \right)
    \ge
    1-\delta_{\rm sup}.
\]
Then
\[
\begin{aligned}
    \Pr\!\left(
        \mathcal E_{\rm pt}
        \cap
        \mathcal E_{\rm sup}(\widehat Z)
    \right)
    &=
    \E\!\left[
        \mathbf 1_{\mathcal E_{\rm pt}}
        \Pr\!\left(
            \mathcal E_{\rm sup}(\widehat Z)
            \mid
            \mathcal D_{\rm pt}
        \right)
    \right]  \\
    &\ge
    (1-\delta_{\rm sup})\Pr(\mathcal E_{\rm pt})  \\
    &\ge
    (1-\delta_{\rm sup})(1-\delta_{\rm pt})  \\
    &\ge
    1-\delta_{\rm sup}-\delta_{\rm pt}.
\end{aligned}
\]
Thus the supervised fixed-representation event and the pretraining event hold
simultaneously with probability at least
\[
    1-\delta_{\rm sup}-\delta_{\rm pt}.
\]

\section{Additional Real-Data Experiments}
\label{app:realdata}

This section gives the full real-data protocol and the complete result tables for WordNet and PrimeKG.  The main text reports a compressed subset of these results, while the appendix records the dataset construction, split rules, negative sampling, relation-balanced objective, multi-view initialization, fixed-embedding results, trainable-embedding ablations, and the PrimeKG entity-wise diagnostic.

\subsection{Real-data evaluation goals}
\label{subsec:app-realdata-goals}

The real-data experiments test the empirical implications of the two-stage framework. First, if entity-side information is aligned with the latent KG geometry, then pretrained entity coordinates should be much stronger than randomly initialized fixed coordinates for downstream relation-wise prediction. Second, when multiple side-information views have heterogeneous quality, multi-view aggregation can improve over a single view, and validation-selected weights can outperform uniform weighting. WordNet and PrimeKG stress complementary regimes: WordNet is a lexical-semantic graph whose synset definitions provide clean textual side information, whereas PrimeKG is a biomedical graph with heterogeneous entity and relation types.

\subsection{Datasets and preprocessing}
\label{subsec:app-realdata-datasets}

\paragraph{WordNet.}
We preprocess WordNet and evaluate it as a synset-level knowledge graph. Starting from the WordNet pointer-relation data, we treat each retained synset as an entity and each observed pointer relation between synsets as a positive KG triple. The preprocessing consists of: (i) canonicalizing synset identifiers so that all triples refer to synset-level nodes; (ii) retaining semantic and lexical pointer relations as relation types; (iii) removing triples whose head or tail synset lacks the textual information needed for side-information encoding; (iv) constructing a text description for each retained synset from its synset type, representative lemma information, and gloss/definition; and (v) applying the relation-balanced train/validation/test split used in our experiments. These node descriptions are then encoded by general-purpose sentence encoders to produce the side-information views used by our representation methods. After preprocessing, the graph contains $116{,}566$ entities, $33$ relation types, and $373{,}330$ observed positive triples. Table~\ref{app:tablew} gives the full split statistics and relation-family summary.

\begin{table}[ht]
\centering
\caption{WordNet statistics after preprocessing and edge splitting with a $60\%/20\%/20\%$ train/validation/test ratio. The dataset is constructed at the synset level, where nodes correspond to WordNet synsets and edges correspond to semantic or lexical relations derived from WordNet pointers.}
\label{app:tablew}
\begin{tabular}{lrp{7.8cm}}
\toprule
Quantity & Count & Description \\
\midrule
Entities & $116{,}566$ & Total number of retained synset nodes \\
Relation types & $33$ & Number of retained base relation types \\
Observed positive triples & $373{,}330$ & Total number of synset-level positive triples before splitting \\
Train positive triples & $223{,}998$ & Positive triples assigned to training \\
Validation positive triples & $74{,}666$ & Positive triples assigned to validation \\
Test positive triples & $74{,}666$ & Positive triples assigned to testing \\
Node granularity & -- & WordNet synsets used as graph entities \\
Node text & -- & Each node text is formed from synset type, representative lemma(s), and gloss/definition \\
Major relation families & -- & Hypernymy/hyponymy, meronymy/holonymy, antonymy, derivational relations, domain relations, and other semantic or lexical WordNet pointer types \\
Most frequent relations & -- & \texttt{hypernym}, \texttt{hyponym}, \texttt{lexical\_derivationally\_related\_form}, \texttt{similar\_to}, \texttt{member\_holonym}, \texttt{member\_meronym}, \texttt{part\_holonym}, \texttt{part\_meronym}, \texttt{instance\_hypernym}, \texttt{instance\_hyponym} \\
\bottomrule
\end{tabular}
\end{table}

\paragraph{PrimeKG.}
PrimeKG is used as a large biomedical KG with entities such as drugs, diseases, phenotypes, proteins, exposures, molecular functions, cellular components, biological processes, and anatomical terms.  Compared with WordNet, PrimeKG has more heterogeneous entity categories and substantially more observed triples.  For each entity, we construct a textual description from its name, entity type, and available source metadata, and then encode this description using biomedical pretrained language models.  After preprocessing, PrimeKG contains $90{,}067$ entities, $30$ relation types, and $8{,}100{,}498$ observed positive triples.  Table~\ref{app:tableKG} reports the full split statistics and the hard-relation set used for the hard-enriched evaluation. PrimeKG uses the 60\%/20\%/20\%  protocol together with a hard-relation test subset selected by the initial fixed-embedding model's per-relation MSE–variance loss criterion.

\begin{table}[ht]
\centering
\caption{PrimeKG statistics after preprocessing and hard-enriched splitting. Easy relations use a $60\%/20\%/20\%$ train/validation/test split, while hard relations use the same validation ratio but an enlarged test ratio, yielding a test set enriched with difficult relation types.}
\label{app:tableKG}
\small
\begin{tabular}{lrp{7.8cm}}
\toprule
Quantity & Count & Description \\
\midrule
Entities & $90{,}067$ & Total number of retained entities \\
Relation types & $30$ & Number of retained base relations \\
Observed positive triples & $8{,}100{,}498$ & Total number of positive triples before splitting \\
Train positive triples & $4{,}758{,}721$ & Positive triples assigned to training \\
Validation positive triples & $1{,}620{,}090$ & Positive triples assigned to validation \\
Test positive triples & $1{,}721{,}687$ & Positive triples assigned to testing \\
Hard-test positive triples & $169{,}288$ & Test triples belonging to the hard-relation subset \\
Easy-test positive triples & $1{,}552{,}399$ & Test triples belonging to the easy-relation subset \\
Hard relations & $11$ & \texttt{disease\_phenotype\_negative}, \texttt{anatomy\_anatomy}, \texttt{disease\_disease}, \texttt{exposure\_protein}, \texttt{molfunc\_molfunc}, \texttt{cellcomp\_cellcomp}, \texttt{phenotype\_phenotype}, \texttt{phenotype\_protein}, \texttt{exposure\_bioprocess}, \texttt{bioprocess\_bioprocess}, \texttt{drug\_protein} \\
\bottomrule
\end{tabular}
\end{table}

\subsection{Evaluation protocol}
\label{subsec:app-realdata-splits}

\paragraph{WordNet edge split.}
For WordNet, observed positive triples are split into train, validation, and test subsets using a $60\%/20\%/20\%$ edge-level split.  All WordNet tables report overall test performance under this split.  We do not define a WordNet hard-relation subset, because the hard-relation protocol is designed to stress biomedical heterogeneity in PrimeKG.

\paragraph{PrimeKG hard-enriched split.}
For PrimeKG, we use an edge-level split together with a hard-enriched test set.  We first train an initial fixed-embedding model and compute a relation-level difficulty score from its per-relation training loss using the MSE--variance criterion.  Relations with the largest difficulty scores are assigned to the hard group, and the remaining relations are assigned to the easy group.  Easy relations follow the standard $60\%/20\%/20\%$ train/validation/test split.  Hard relations use the same validation ratio but an enlarged test ratio, producing $169{,}288$ hard-test positive triples and $1{,}552{,}399$ easy-test positive triples.  The hard-relation test set is the primary PrimeKG stress test because it reduces the dominance of frequent or easier biomedical relations.

\paragraph{Negative sampling.}
Both datasets contain observed positive triples only. We construct a balanced binary link-prediction task by pairing each positive triple with one corrupted negative triple. For a positive triple $(h,r,t)$, the negative triple is generated by replacing either the head or the tail while keeping the relation fixed. Whenever entity type information is available, the replacement entity is sampled from the same type as the corrupted endpoint. Type-matched corruption prevents the task from being solved through trivial type mismatch; this is particularly important for PrimeKG because many biomedical relations connect specific entity categories.
\paragraph{Validation usage.}
The validation split is used only for model selection and is never included in training or final test evaluation.  For each downstream model, we select the reported checkpoint using relation-balanced AUC performance.  For multi-view initialization, the validation set is also used to choose the aggregation weights among a fixed candidate set. Once the checkpoint and view weights are selected, all reported results are evaluated on the held-out test split.

\paragraph{Relation-balanced objective and metrics.}
Both graphs have skewed relation frequencies, so a uniformly averaged loss can be dominated by frequent relation types.  We therefore use relation-balanced weighting during training and evaluation.  Let $\mathcal D_r$ denote the examples with relation $r$ in the relevant split, and let $n_r=|\mathcal D_r|$.  We assign relation weights
\begin{equation}
\label{eq:app-relation-balanced-weight}
    \omega_r
    =
    \frac{n_r^{-1}}{\sum_{r'} n_{r'}^{-1}},
\end{equation}
and optimize the weighted empirical objective
\begin{equation}
\label{eq:app-relation-balanced-loss}
    \widehat L(\theta)
    =
    \sum_{(x_i,y_i)\in\mathcal D_{\rm train}}
    \omega_{r_i}\,\ell\{f_\theta(x_i),y_i\},
\end{equation}
where $x_i=(h_i,r_i,t_i)$ and $\ell$ is the mean squared error used by the corresponding model.  The same relation weights are used when aggregating validation and test metrics.  We report AUROC, accuracy, and F1.  Unless otherwise stated, results are averaged over $10$ random seeds and shown as mean $\pm$ standard deviation. Detailed optimization and training details are provided in Table~\ref{tab:downstream-kg-optimization-details}.

\begin{table}[t]
\centering
\begin{tabular}{p{0.30\linewidth}p{0.30\linewidth}p{0.30\linewidth}}
\toprule
\textbf{Detail} & \textbf{PrimeKG} & \textbf{WordNet} \\
\midrule
Optimizer &
AdamW &
AdamW \\
Learning rate &
$2\times 10^{-3}$ &
$2\times 10^{-3}$ \\
Downstream KG batch size &
1024 &
1024 \\
LLM embedding batch size &
32 &
32 \\
Number of workers &
8 &
8 \\
Epochs &
40 &
50 \\
Number of random seeds&
10&
10 \\
Train/validation/test split &
Hard-enriched split &
Random edge split \\
GPU workers/resources &
NVIDIA GeForce RTX 5080, 16{,}303 MiB memory &
NVIDIA GeForce RTX 5070,
12{,}227 MiB memory \\
Driver/CUDA versions &
NVIDIA driver 595.58.03; CUDA 13.2 &
NVIDIA driver 595.58.03; CUDA 13.2 \\
Approximate execution time &
About 15 minutes per seed &
About 5 minutes per seed \\
\bottomrule
\end{tabular}
\caption{Experimental details.}
\label{tab:downstream-kg-optimization-details}
\end{table}
\subsection{Ablation studies}
\label{subsec:realdata-ablations}

\paragraph{Fixed versus trainable entity embeddings.}
Table~\ref{tab:main-wordnet-trainable-ablation} reports the WordNet ablation in which entity embeddings are allowed to update during supervised training. Making embeddings trainable substantially improves the random-initialization variant, from $0.6052$ to $0.8260$ AUROC, from $0.5398$ to $0.7350$ accuracy, and from $0.2791$ to $0.6559$ F1. However, all trainable pretrained variants are weaker than their fixed-embedding counterparts. For example, the multi-view model decreases from $0.9508$ to $0.8831$ AUROC, from $0.8618$ to $0.7894$ accuracy, and from $0.8500$ to $0.7892$ F1 when entity embeddings are made trainable. This suggests that on WordNet, the pretrained semantic coordinates are already well aligned with the target relations, and updating a large number of entity parameters can increase variance or cause representation drift away from a high-quality initialization.

\begin{table*}[t!]
\centering
\caption{WordNet trainable-embedding ablation under the edge split with relation-balanced training and evaluation.}
\label{tab:main-wordnet-trainable-ablation}
\small
\begin{tabular}{lccc}
\toprule
Method & AUROC & Accuracy & F1 \\
\midrule
\modelname: random initialization & $0.8260 \pm 0.0080$ & $0.7350 \pm 0.0120$ & $0.6559 \pm 0.0189$ \\
\modelname: single-view all-mpnet & $0.8756 \pm 0.0063$ & $0.7830 \pm 0.0092$ & $0.7803 \pm 0.0141$ \\
\modelname: single-view MiniLM & $0.8665 \pm 0.0061$ & $0.7763 \pm 0.0081$ & $0.7751 \pm 0.0087$ \\
\modelname: single-view BGE & $0.8793 \pm 0.0067$ & $0.7864 \pm 0.0081$ & $0.7837 \pm 0.0111$ \\
\modelname: single-view E5 & $0.8747 \pm 0.0046$ & $0.7832 \pm 0.0069$ & $0.7832 \pm 0.0089$ \\
\modelname: multi-view weighted (uniform) & \underline{$0.8831 \pm 0.0052$} & \underline{$0.7894 \pm 0.0068$} & \underline{$0.7892 \pm 0.0110$} \\
\bottomrule
\end{tabular}
\end{table*}

Table~\ref{tab:main-primekg-trainable-ablation} reports the corresponding PrimeKG trainable-embedding ablation for the \modelname{} variants. In contrast to WordNet, supervised refinement consistently improves the neural KG models on PrimeKG. On the overall PrimeKG test set, the selected-weight multi-view model improves from $0.8574$ to $0.9007$ AUROC, from $0.7761$ to $0.8175$ accuracy, and from $0.7799$ to $0.8023$ F1 when entity embeddings are made trainable. The random-initialization variant improves even more substantially, from $0.6906/0.6336/0.5986$ to $0.8956/0.8106/0.7895$ in AUROC/accuracy/F1. This indicates that PrimeKG is a large-data regime in which abundant supervised triples reduce the overfitting risk from the additional $O(Nd)$ entity parameters and provide enough signal to adapt entity coordinates to KG-specific relational structure. On the hard-relation subset, selected-weight multi-view initialization remains the strongest neural initialization across all three metrics, reaching $0.8281$ AUROC, $0.7065$ accuracy, and $0.6215$ F1.

\begin{table*}[t!]
\centering
\caption{PrimeKG trainable-embedding ablation for \modelname{} variants. The ``Test set'' column indicates whether metrics are computed on all test relations or only on the hard-relation subset.}
\label{tab:main-primekg-trainable-ablation}
{\small
\setlength{\tabcolsep}{4pt}
\renewcommand{\arraystretch}{0.98}
\begin{tabular}{@{}llccc@{}}
\toprule
Method & Test set & AUROC & Accuracy & F1 \\
\midrule
\modelname: random initialization
& Overall & $0.8956 \pm 0.0035$ & $0.8106 \pm 0.0067$ & $0.7895 \pm 0.0098$ \\
& Hard & $0.8247 \pm 0.0026$ & $0.6994 \pm 0.0034$ & $0.6050 \pm 0.0091$ \\
\midrule
\modelname: single-view BioBERT
& Overall & $0.8995 \pm 0.0040$ & \underline{$0.8177 \pm 0.0068$} & $0.8022 \pm 0.0133$ \\
& Hard & $0.8266 \pm 0.0041$ & $0.7062 \pm 0.0204$ & $0.6196 \pm 0.0521$ \\
\midrule
\modelname: single-view PubMedBERT
& Overall & $0.9002 \pm 0.0036$ & $0.8167 \pm 0.0078$ & $0.8006 \pm 0.0152$ \\
& Hard & $0.8255 \pm 0.0041$ & $0.7046 \pm 0.0203$ & $0.6168 \pm 0.0527$ \\
\midrule
\modelname: single-view SciBERT
& Overall & $0.8992 \pm 0.0047$ & $0.8161 \pm 0.0078$ & $0.7993 \pm 0.0147$ \\
& Hard & $0.8276 \pm 0.0044$ & $0.7016 \pm 0.0224$ & $0.6087 \pm 0.0586$ \\
\midrule
\modelname: multi-view selected-weight
& Overall & \underline{$0.9007 \pm 0.0027$} & $0.8175 \pm 0.0061$ & \underline{$0.8023 \pm 0.0133$} \\
& Hard & \underline{$0.8281 \pm 0.0034$} & \underline{$0.7065 \pm 0.0220$} & \underline{$0.6215 \pm 0.0562$} \\
\bottomrule
\end{tabular}}
\end{table*}

Overall, the trainable-embedding ablations reveal a data-dependent bias--variance tradeoff. When labeled triples are limited relative to the number of entities and relations, fixing pretrained coordinates can preserve a strong semantic representation while avoiding the variance and representation-drift risk of estimating many entity-specific parameters. This explains the WordNet pattern. In contrast, PrimeKG has many more supervised triples, so trainable embeddings reduce relation-specific bias and improve the neural KG variants. Pretraining still helps in this regime, but its marginal benefit is smaller because even randomly initialized trainable embeddings can be substantially refined by supervised learning.

\paragraph{Semi-inductive entity-wise split.}
Table~\ref{tab:main-entity-split-ablation} reports an additional entity-wise split diagnostic. We use the same $60\%/20\%/20\%$ proportions as in the random edge-split experiments, but apply the split at the entity level rather than the edge level. This setting is closer to a semi-inductive or cold-start regime because test triples involve held-out or weakly observed entities. Side-information pretraining becomes especially important in this setting because graph-only baselines have limited supervised edge evidence for such entities.

On WordNet, the uniform multi-view initialization achieves $0.8958$ AUROC, $0.7883$ accuracy, and $0.7500$ F1. Compared with the best graph-only baseline in each metric, this improves AUROC by $0.2826$, accuracy by $0.2722$, and F1 by $0.0761$. On PrimeKG, the selected multi-view improves AUROC over the best graph-only baseline ($0.7450$ vs.\ $0.7083$ for TransE), while the accuracy gain is modest and within the baseline's variability ($0.6626$ vs.\ $0.6553\pm0.0104$ for DistMult); DistMult retains a clearly higher F1 ($0.7413$ vs.\ $0.6265$). These results support the role of side information in semi-inductive generalization, though on PrimeKG the benefit is metric-dependent.
\begin{table*}[t!]
\centering
\caption{Entity-wise split diagnostic. WordNet values are averaged over $10$ seeds; the PrimeKG rows report the entity-split diagnostic values from the same protocol.}
\label{tab:main-entity-split-ablation}
\small
\setlength{\tabcolsep}{4.5pt}
\begin{tabular}{llccc}
\toprule
Dataset & Method & AUROC & Accuracy & F1 \\
\midrule
WordNet & TransE
& $0.5123 \pm 0.0053$ & $0.5000 \pm 0.0001$ & $0.0001 \pm 0.0000$ \\
WordNet & DistMult
& $0.5090 \pm 0.0253$ & $0.5097 \pm 0.0209$ & $0.5444 \pm 0.0448$ \\
WordNet & ComplEx
& $0.5008 \pm 0.0263$ & $0.5084 \pm 0.0212$ & $0.5862 \pm 0.0406$ \\
WordNet & RotatE
& $0.6132 \pm 0.0248$ & $0.5161 \pm 0.0085$ & $0.6739 \pm 0.0039$ \\
\midrule
WordNet & \modelname: random initialization
& $0.4923 \pm 0.0132$ & $0.4991 \pm 0.0060$ & $0.1581 \pm 0.1151$ \\
WordNet & \modelname: single-view all-mpnet
& $0.8839 \pm 0.0206$ & $0.7804 \pm 0.0136$ & $0.7404 \pm 0.0229$ \\
WordNet & \modelname: single-view MiniLM
& $0.8761 \pm 0.0255$ & $0.7698 \pm 0.0261$ & $0.7223 \pm 0.0380$ \\
WordNet & \modelname: single-view BGE
& $0.8931 \pm 0.0189$ & $0.7844 \pm 0.0178$ & $0.7464 \pm 0.0265$ \\
WordNet & \modelname: single-view E5
& $0.8818 \pm 0.0197$ & $0.7749 \pm 0.0159$ & $0.7285 \pm 0.0231$ \\
WordNet & \modelname: multi-view weighted (uniform)
& \underline{$0.8958 \pm 0.0242$} & \underline{$0.7883 \pm 0.0167$} & \underline{$0.7500 \pm 0.0258$} \\
\midrule
PrimeKG & TransE
& $0.7083 \pm 0.0185$ & $0.5006 \pm 0.0007$ & $0.0053 \pm 0.0048$ \\
PrimeKG & DistMult
& $0.6921 \pm 0.0079$ & $0.6553 \pm 0.0104$ & \underline{$0.7413 \pm 0.0091$} \\
PrimeKG & ComplEx
& $0.6364 \pm 0.0068$ & $0.6105 \pm 0.0052$ & $0.7175 \pm 0.0036$ \\
PrimeKG & RotatE
& $0.5663 \pm 0.0106$ & $0.5037 \pm 0.0051$ & $0.6672 \pm 0.0017$ \\
\midrule
PrimeKG & \modelname: random initialization
& $0.4984 \pm 0.0079$ & $0.4976 \pm 0.0057$ & $0.3042 \pm 0.0579$ \\
PrimeKG & \modelname: single-view PubMedBERT
& $0.7398 \pm 0.0077$ & $0.6575 \pm 0.0071$ & $0.6122 \pm 0.0128$ \\
PrimeKG & \modelname: single-view BioBERT
& $0.7050 \pm 0.0114$ & $0.6329 \pm 0.0086$ & $0.5931 \pm 0.0141$ \\
PrimeKG & \modelname: single-view SciBERT
& $0.7263 \pm 0.0111$ & $0.6491 \pm 0.0082$ & $0.6017 \pm 0.0162$ \\
PrimeKG & \modelname: multi-view weighted (uniform)
& $0.7389 \pm 0.0063$ & $0.6577 \pm 0.0070$ & $0.6137 \pm 0.0175$ \\
PrimeKG & \modelname: multi-view weighted (selected)
& \underline{$0.7450 \pm 0.0079$} & \underline{$0.6626 \pm 0.0032$} & $0.6265 \pm 0.0151$ \\
\bottomrule
\end{tabular}
\end{table*}
\paragraph{Takeaway.}
Across the real-data experiments, fixed pretrained coordinates are much stronger than random fixed coordinates, especially in the semi-inductive entity-wise diagnostics. Multi-view aggregation is useful but dataset-dependent: uniform weights suffice on WordNet, whereas PrimeKG benefits from validation-selected weights that emphasize PubMedBERT. Finally, trainability reflects a bias--variance tradeoff. Frozen pretrained coordinates are preferable when semantic side information is already well aligned and labeled triples are relatively sparse, while PrimeKG's larger supervised graph benefits from trainable adaptation, especially on the overall test set.


\begin{thebibliography}{34}
\providecommand{\natexlab}[1]{#1}
\providecommand{\url}[1]{\texttt{#1}}
\expandafter\ifx\csname urlstyle\endcsname\relax
  \providecommand{\doi}[1]{doi: #1}\else
  \providecommand{\doi}{doi: \begingroup \urlstyle{rm}\Url}\fi

\bibitem[Bonner et~al.(2022)Bonner, Barrett, Ye, Swiers, Engkvist, Bender, Hoyt, and Hamilton]{bonner2022review}
Stephen Bonner, Ian~P Barrett, Cheng Ye, Rowan Swiers, Ola Engkvist, Andreas Bender, Charles~Tapley Hoyt, and William~L Hamilton.
\newblock A review of biomedical datasets relating to drug discovery: a knowledge graph perspective.
\newblock \emph{Briefings in Bioinformatics}, 23\penalty0 (6):\penalty0 bbac404, 2022.

\bibitem[Bordes et~al.(2013)Bordes, Usunier, Garcia-Duran, Weston, and Yakhnenko]{bordes2013translating}
Antoine Bordes, Nicolas Usunier, Alberto Garcia-Duran, Jason Weston, and Oksana Yakhnenko.
\newblock Translating embeddings for modeling multi-relational data.
\newblock \emph{Advances in Neural Information Processing Systems}, 26, 2013.

\bibitem[Chandak et~al.(2023)Chandak, Huang, and Zitnik]{chandak2023building}
Payal Chandak, Kexin Huang, and Marinka Zitnik.
\newblock Building a knowledge graph to enable precision medicine.
\newblock \emph{Scientific Data}, 10\penalty0 (1):\penalty0 67, 2023.

\bibitem[Cho and Saul(2009)]{cho2009kernel}
Youngmin Cho and Lawrence Saul.
\newblock Kernel methods for deep learning.
\newblock \emph{Advances in neural information processing systems}, 22, 2009.

\bibitem[Daza et~al.(2021)Daza, Cochez, and Groth]{daza2021inductive}
Daniel Daza, Michael Cochez, and Paul Groth.
\newblock Inductive entity representations from text via link prediction.
\newblock In \emph{Proceedings of The Web Conference}, pages 798--808, 2021.

\bibitem[El~Karoui(2010)]{el2010spectrum}
Noureddine El~Karoui.
\newblock The spectrum of kernel random matrices.
\newblock \emph{The Annals of Statistics}, 38\penalty0 (1):\penalty0 1--50, 2010.

\bibitem[Fan et~al.(2019)Fan, Wang, Wang, and Zhu]{fan2019distributed}
Jianqing Fan, Dong Wang, Kaizheng Wang, and Ziwei Zhu.
\newblock Distributed estimation of principal eigenspaces.
\newblock \emph{Annals of statistics}, 47\penalty0 (6):\penalty0 3009, 2019.

\bibitem[Firey(1960)]{firey1960remainder}
William~J Firey.
\newblock Remainder formulae in taylor's theorem.
\newblock \emph{The American Mathematical Monthly}, 67\penalty0 (9):\penalty0 903--905, 1960.

\bibitem[Ge et~al.(2024)Ge, Tang, Fan, and Jin]{ge2023provable}
Jiawei Ge, Shange Tang, Jianqing Fan, and Chi Jin.
\newblock On the provable advantage of unsupervised pretraining.
\newblock In \emph{The Twelfth International Conference on Learning Representations}, 2024.

\bibitem[HaoChen et~al.(2021)HaoChen, Wei, Gaidon, and Ma]{haochen2021provable}
Jeff~Z. HaoChen, Colin Wei, Adrien Gaidon, and Tengyu Ma.
\newblock Provable guarantees for self-supervised deep learning with spectral contrastive loss.
\newblock In \emph{Advances in Neural Information Processing Systems}, volume~34, pages 5000--5011, 2021.

\bibitem[Hoff et~al.(2002)Hoff, Raftery, and Handcock]{hoff2002latent}
Peter~D. Hoff, Adrian~E. Raftery, and Mark~S. Handcock.
\newblock Latent space approaches to social network analysis.
\newblock \emph{Journal of the American Statistical Association}, 97\penalty0 (460):\penalty0 1090--1098, 2002.

\bibitem[Huang et~al.(2019)Huang, Zhang, Li, and Li]{huang2019knowledge}
Xiao Huang, Jingyuan Zhang, Dingcheng Li, and Ping Li.
\newblock Knowledge graph embedding based question answering.
\newblock In \emph{Proceedings of the Twelfth {ACM} International Conference on Web Search and Data Mining}, pages 105--113, 2019.

\bibitem[Ji et~al.(2021)Ji, Pan, Cambria, Marttinen, and Yu]{ji2021survey}
Shaoxiong Ji, Shirui Pan, Erik Cambria, Pekka Marttinen, and Philip~S Yu.
\newblock A survey on knowledge graphs: Representation, acquisition, and applications.
\newblock \emph{{IEEE} Transactions on Neural Networks and Learning Systems}, 33\penalty0 (2):\penalty0 494--514, 2021.

\bibitem[Lee et~al.(2021)Lee, Lei, Saunshi, and Zhuo]{lee2021predicting}
Jason~D. Lee, Qi~Lei, Nikunj Saunshi, and Jiacheng Zhuo.
\newblock Predicting what you already know helps: Provable self-supervised learning.
\newblock In \emph{Advances in Neural Information Processing Systems}, volume~34, pages 309--323, 2021.

\bibitem[Liu et~al.(2024)Liu, Cai, and Li]{liu2024representation}
Suqi Liu, Tianxi Cai, and Xiaoou Li.
\newblock Representation-enhanced neural knowledge integration with application to large-scale medical ontology learning.
\newblock \emph{{arXiv} preprint {arXiv}:2410.07454}, 2024.

\bibitem[MacDonald et~al.(2022)MacDonald, Levina, and Zhu]{macdonald2022latent}
Peter~W. MacDonald, Elizaveta Levina, and Ji~Zhu.
\newblock Latent space models for multiplex networks with shared structure.
\newblock \emph{Biometrika}, 109\penalty0 (3):\penalty0 683--706, 2022.

\bibitem[Miller(1995)]{miller1995wordnet}
George~A Miller.
\newblock {WordNet}: a lexical database for english.
\newblock \emph{Communications of the {ACM}}, 38\penalty0 (11):\penalty0 39--41, 1995.

\bibitem[Nickel et~al.(2015)Nickel, Murphy, Tresp, and Gabrilovich]{nickel2016review}
Maximilian Nickel, Kevin Murphy, Volker Tresp, and Evgeniy Gabrilovich.
\newblock A review of relational machine learning for knowledge graphs.
\newblock \emph{Proceedings of the {IEEE}}, 104\penalty0 (1):\penalty0 11--33, 2015.

\bibitem[Nourdin et~al.(2010)Nourdin, Peccati, and Reinert]{nourdin2010invariance}
Ivan Nourdin, Giovanni Peccati, and Gesine Reinert.
\newblock Invariance principles for homogeneous sums: universality of gaussian wiener chaos.
\newblock \emph{The Annals of Probability}, 38\penalty0 (5):\penalty0 1947--1985, 2010.
\newblock \doi{10.1214/10-AOP531}.

\bibitem[Saunshi et~al.(2019)Saunshi, Plevrakis, Arora, Khodak, and Khandeparkar]{arora2019theoretical}
Nikunj Saunshi, Orestis Plevrakis, Sanjeev Arora, Mikhail Khodak, and Hrishikesh Khandeparkar.
\newblock A theoretical analysis of contrastive unsupervised representation learning.
\newblock In \emph{International Conference on Machine Learning}, pages 5628--5637. {PMLR}, 2019.

\bibitem[Sun et~al.(2019)Sun, Deng, Nie, and Tang]{sun2019rotate}
Zhiqing Sun, Zhi-Hong Deng, Jian-Yun Nie, and Jian Tang.
\newblock {RotatE}: Knowledge graph embedding by relational rotation in complex space.
\newblock \emph{{arXiv} preprint {arXiv}:1902.10197}, 2019.

\bibitem[Trouillon et~al.(2016)Trouillon, Welbl, Riedel, Gaussier, and Bouchard]{trouillon2016complex}
Th{\'e}o Trouillon, Johannes Welbl, Sebastian Riedel, {\'E}ric Gaussier, and Guillaume Bouchard.
\newblock Complex embeddings for simple link prediction.
\newblock In \emph{International Conference on Machine Learning}, pages 2071--2080. {PMLR}, 2016.

\bibitem[Wang et~al.(2021{\natexlab{a}})Wang, Shen, Long, Zhou, Wang, and Chang]{wang2021star}
Bo~Wang, Tao Shen, Guodong Long, Tianyi Zhou, Ying Wang, and Yi~Chang.
\newblock Structure-augmented text representation learning for efficient knowledge graph completion.
\newblock In \emph{Proceedings of The Web Conference}, pages 1737--1748, 2021{\natexlab{a}}.

\bibitem[Wang et~al.(2022)Wang, Zhao, Wei, and Liu]{wang2022simkgc}
Liang Wang, Wei Zhao, Zhuoyu Wei, and Jingming Liu.
\newblock {SimKGC}: Simple contrastive knowledge graph completion with pre-trained language models.
\newblock In \emph{Proceedings of the 60th Annual Meeting of the Association for Computational Linguistics (Volume 1: Long Papers)}, pages 4281--4294, 2022.

\bibitem[Wang et~al.(2021{\natexlab{b}})Wang, Gao, Zhu, Zhang, Liu, Li, and Tang]{wang2021kepler}
Xiaozhi Wang, Tianyu Gao, Zhaocheng Zhu, Zhengyan Zhang, Zhiyuan Liu, Juanzi Li, and Jian Tang.
\newblock {KEPLER}: A unified model for knowledge embedding and pre-trained language representation.
\newblock \emph{Transactions of the Association for Computational Linguistics}, 9:\penalty0 176--194, 2021{\natexlab{b}}.

\bibitem[Wilson et~al.(2016)Wilson, Hu, Salakhutdinov, and Xing]{wilson2016deep}
Andrew~Gordon Wilson, Zhiting Hu, Ruslan Salakhutdinov, and Eric~P Xing.
\newblock Deep kernel learning.
\newblock In \emph{Artificial intelligence and statistics}, pages 370--378. PMLR, 2016.

\bibitem[Xie et~al.(2016)Xie, Liu, Jia, Luan, and Sun]{xie2016representation}
Ruobing Xie, Zhiyuan Liu, Jia Jia, Huanbo Luan, and Maosong Sun.
\newblock Representation learning of knowledge graphs with entity descriptions.
\newblock In \emph{Proceedings of the {AAAI} Conference on Artificial Intelligence}, volume~30, 2016.

\bibitem[Yan et~al.(2024)Yan, Chen, and Fan]{yan2024inference}
Yuling Yan, Yuxin Chen, and Jianqing Fan.
\newblock Inference for heteroskedastic pca with missing data.
\newblock \emph{The Annals of Statistics}, 52\penalty0 (2):\penalty0 729--756, 2024.

\bibitem[Yang et~al.(2014)Yang, Yih, He, Gao, and Deng]{yang2014embedding}
Bishan Yang, Wen-tau Yih, Xiaodong He, Jianfeng Gao, and Li~Deng.
\newblock Embedding entities and relations for learning and inference in knowledge bases.
\newblock \emph{{arXiv} preprint {arXiv}:1412.6575}, 2014.

\bibitem[Yang et~al.(2022)Yang, Wu, Yang, Lian, Guo, and Wang]{yang2022survey}
Yang Yang, Zhilei Wu, Yuexiang Yang, Shuangshuang Lian, Fengjie Guo, and Zhiwei Wang.
\newblock A survey of information extraction based on deep learning.
\newblock \emph{Applied Sciences}, 12\penalty0 (19):\penalty0 9691, 2022.

\bibitem[Yao et~al.(2019)Yao, Mao, and Luo]{yao2019kg}
Liang Yao, Chengsheng Mao, and Yuan Luo.
\newblock {KG-BERT}: {BERT} for knowledge graph completion.
\newblock \emph{{arXiv} preprint {arXiv}:1909.03193}, 2019.

\bibitem[Yarotsky(2017)]{yarotsky2017error}
Dmitry Yarotsky.
\newblock Error bounds for approximations with deep {ReLU} networks.
\newblock \emph{Neural Networks}, 94:\penalty0 103--114, 2017.

\bibitem[Yu et~al.(2015)Yu, Wang, and Samworth]{yu2015useful}
Yi~Yu, Tengyao Wang, and Richard~J Samworth.
\newblock A useful variant of the {Davis--Kahan} theorem for statisticians.
\newblock \emph{Biometrika}, 102\penalty0 (2):\penalty0 315--323, 2015.

\bibitem[Zhang et~al.(2016)Zhang, Yuan, Lian, Xie, and Ma]{zhang2016collaborative}
Fuzheng Zhang, Nicholas~Jing Yuan, Defu Lian, Xing Xie, and Wei-Ying Ma.
\newblock Collaborative knowledge base embedding for recommender systems.
\newblock In \emph{Proceedings of the 22nd {ACM} {SIGKDD} International Conference on Knowledge Discovery and Data Mining}, pages 353--362, 2016.

\end{thebibliography}
\end{document}